\definecolor{vertfonce}{rgb}{0.20, 0.46, 0.25}
\definecolor{rougefonce}{rgb}{0.64, 0.09, 0.20}
\definecolor{gris}{gray}{0.5}
\pgfplotsset{compat=1.15}
\newcommand{\Bl}{\color{blue}}
\newcommand{\Bk}{\color{black}}
\renewcommand{\Re}{\mathrm{Re}\,}
\renewcommand{\Im}{\mathrm{Im}\,}
\renewcommand{\leq}{\leqslant}	
\renewcommand{\geq}{\geqslant}
\newtheorem{theorem}{Theorem}[section]
\newtheorem{lemma}[theorem]{Lemma}
\newtheorem{corollary}[theorem]{Corollary}
\newtheorem{proposition}[theorem]{Proposition}
\theoremstyle{definition}
\newtheorem{remark}[theorem]{Remark}
\newtheorem{definition}[theorem]{Definition}
\newtheorem{assumption}{Assumption}
\newcommand{\N}{{\mathbb N}}
\newcommand{\R}{{\mathbb R}}
\definecolor{DarkGreen}{rgb}{0,0.5,0.1} 
\newcommand\soutD{\bgroup\markoverwith
{\textcolor{DarkGreen}{\rule[.5ex]{2pt}{1pt}}}\ULon}
\newcommand{\Hm}[1]{\leavevmode{\marginpar{\tiny%
$\hbox to 0mm{\hspace*{-0.5mm}$\leftarrow$\hss}%
\vcenter{\vrule depth 0.1mm height 0.1mm width \the\marginparwidth}%
\hbox to
0mm{\hss$\rightarrow$\hspace*{-0.5mm}}$\\\relax\raggedright #1}}}
\title[Semiclassical asymptotics of the Bloch--Torrey operator]{Semiclassical asymptotics of the Bloch--Torrey operator in two dimensions}
\author[F. H\'erau]{Fr\'ed\'eric H\'erau}
\address[F. H\'erau]{LMJL - UMR6629, Nantes University  , CNRS, 2 rue de la Houssini\`ere, BP 92208, F-44322 Nantes cedex 3, France}
\email{herau@univ-nantes.fr}
\author[D. Krej\v{c}i\v{r}{\'\i}k]{David Krej\v{c}i\v{r}{\'\i}k}
\address[D. Krej\v ci\v r\'ik]{Department of Mathematics,
	Faculty of Nuclear Sciences and Physical Engineering,
	Czech Technical University in Prague,
	Trojanova 13, 12000 Prague, Czech Republic}
\email{david.krejcirik@fjfi.cvut.cz}
\author[N. Raymond]{Nicolas Raymond}
\address[N. Raymond]{Univ Angers, CNRS, LAREMA, Institut Universitaire de France, SFR MATHSTIC, F-49000 Angers, France}
\email{nicolas.raymond@univ-angers.fr}
\begin{document}

\begin{abstract}
The Bloch--Torrey operator $-h^2\Delta+e^{i\alpha}x_1$
on a bounded smooth planar domain,
subject to Dirichlet boundary conditions, is analyzed.
Assuming $\alpha\in\left[0,\frac{3\pi}{5}\right)$
and a non-degeneracy assumption on
the left-hand side of the domain,
asymptotics of the eigenvalues with the smallest real part
in the limit $h \to 0$ are derived.
The strategy is a backward complex scaling
and the reduction to a tensorized operator
involving a real Airy operator and a complex harmonic oscillator.
\end{abstract}	
	
\maketitle
	
\section{Introduction}	
%
Let~$\Omega$ be a smooth bounded open connected set in $\mathbb{R}^2$.
Given a small positive parameter~$h$
and a fixed real constant $\alpha \in [0,\pi]$,
we consider the operator
\begin{equation}\label{operator}
  \mathscr{L}_{h,\alpha}=-h^2\Delta+e^{i\alpha}x_1
\end{equation}
in  $L^2(\Omega)$,
subject to Dirichlet boundary conditions.
On its natural domain
$
  \mathrm{Dom}(\mathscr{L}_{h,\alpha})=H^2(\Omega)\cap H^1_0(\Omega)
$,
the operator is closed, has non-empty resolvent set and compact resolvent.
Consequently, the spectrum is purely discrete
and can be written as an infinite sequence of complex numbers tending
to $+\infty$ in modulus or as a (possibly empty) finite sequence.
The latter cannot be \emph{a priori} excluded because $\mathscr{L}_{h,\alpha}$
is non-selfadjoint unless $\alpha \in \{0,\pi\}$.
Our goal is to show the existence of ``low-lying'' eigenvalues
and derive their asymptotics in the semiclassical limit $h \to 0$.

\subsection{Motivations}
There are two sources of motivation for this work.
First, the selfadjoint situation $\alpha=0$ has been recently analysed
in~\cite{CKPRS22}
in the context of semiconductor devices exposed
to a strong uniform electric field.
Indeed, $h^{-2} \mathscr{L}_{h,0}$ is the Hamiltonian
of an electron confined to a nanostructure of shape~$\Omega$,
subject to singularly scaled electric potential $h^{-2} x_1$.
The following geometric hypothesis is adopted in~\cite{CKPRS22}:
	\begin{assumption}\label{geoassumption0}
	The minimum	$\min\{ x_1 : x\in\overline{\Omega} \}$
	is uniquely attained at a point $A_0$, assumed to be $(0,0)$ (without loss of generality). Moreover, the (signed) curvature $\kappa_0$ of $\partial\Omega$
(computed with respect to the inner normal of~$\Omega$)
	at $A_0 = (0,0)$ is positive.
	\end{assumption}

Let $(\lambda_n(h))_{n\geq 1}$ denote the non-decreasing sequence of
the eigenvalues of~$\mathscr{L}_{h,0}$,
where each eigenvalue is repeated according to its multiplicity.
The following asymptotic estimate of each individual eigenvalue
was established in~\cite{CKPRS22}:
\begin{theorem}[\cite{CKPRS22}]\label{thm.0}
	 Assume $\alpha = 0$ and Assumption~\ref{geoassumption0}.
	Then, for all $n\geq 1$,
\begin{equation}\label{as1}
		\lambda_n(h)=z_1 h^{\frac23}+(2n-1)h\sqrt{\frac{\kappa_0}{2}}+o(h)
\end{equation}
	as $h \to 0$,
	where $z_1$ is the absolute value of
	the smallest zero of the Airy function $\mathsf{Ai}$.	
\end{theorem}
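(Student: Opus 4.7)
The plan is to exploit the self-adjointness and non-negativity of $\mathscr{L}_{h,0}$ when $\alpha=0$ and to carry out the standard two-sided semiclassical analysis: a quasimode construction for the upper bound, combined with Agmon-type localization and reduction to a tensorized model operator for the lower bound. The starting observation is that $x_1\geq 0$ on $\overline{\Omega}$ and vanishes only at $A_0$, so Agmon/Persson estimates imply that normalized eigenfunctions associated with eigenvalues of order $h^{2/3}$ concentrate exponentially in a shrinking neighborhood of $A_0$. In a tubular neighborhood of $\partial\Omega$ around $A_0$ I would introduce boundary-fitted coordinates $(s,t)$, with $s$ the signed arc-length along $\partial\Omega$ from $A_0$ and $t>0$ the normal distance to $\partial\Omega$. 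Since the inward normal at $A_0$ is aligned with $+e_1$, a Taylor expansion yields
\begin{equation*}
 x_1(s,t) = t + \tfrac{\kappa_0}{2}\,s^2 + \mathcal{O}\!\left(s^2 t + |s|^3\right),
\end{equation*}
while $-h^2\Delta$ takes the standard form $-h^2\partial_t^2 - h^2(1-\kappa(s)t)^{-2}\partial_s^2 + (\text{lower order})$.

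The two natural semiclassical scales are $t\sim h^{2/3}$ (the Airy balance of $-h^2\partial_t^2$ against $t$) and $s\sim h^{1/2}$ (the harmonic-oscillator balance of $-h^2\partial_s^2$ against $\tfrac{\kappa_0}{2}s^2$). After the change of variables $\tau=h^{-2/3}t$, $\sigma=h^{-1/2}s$, the operator formally reads
\begin{equation*}
 h^{2/3}\bigl(-\partial_\tau^2+\tau\bigr) \;+\; h\!\left(-\partial_\sigma^2+\tfrac{\kappa_0}{2}\sigma^2\right) + R_h,
\end{equation*}
whose leading part is the tensor product of a Dirichlet Airy operator on $\R_+$ (eigenvalues $z_k$) and a harmonic oscillator on $\R$ (eigenvalues $(2n-1)\sqrt{\kappa_0/2}$). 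For the upper bound I would take, for each $n\geq 1$, the quasimode
\begin{equation*}
 u_n^{\mathrm{qm}}(s,t) = \chi(s,t)\,\psi_0\!\left(h^{-2/3}t\right)\phi_n\!\left(h^{-1/2}s\right),
\end{equation*}
with $\chi$ a cutoff near $A_0$, $\psi_0$ the normalized Dirichlet ground state of $-\partial_\tau^2+\tau$ on $\R_+$, and $\phi_n$ the $n$-th Hermite eigenfunction of $-\partial_\sigma^2+\tfrac{\kappa_0}{2}\sigma^2$. Setting $\mu_n(h)=z_1 h^{2/3}+(2n-1)\sqrt{\kappa_0/2}\,h$, a direct computation gives $\|(\mathscr{L}_{h,0}-\mu_n(h))u_n^{\mathrm{qm}}\|=o(h)\|u_n^{\mathrm{qm}}\|$, and since the $u_n^{\mathrm{qm}}$ are almost orthogonal, the min-max principle for the self-adjoint $\mathscr{L}_{h,0}$ delivers $\lambda_n(h)\leq \mu_n(h)+o(h)$.

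For the matching lower bound I would use an IMS partition of unity to split the problem into a part localized near $A_0$, where the rescaling above applies, and an outer part on which $\mathscr{L}_{h,0}\geq \delta>0$. On the localized part, the min-max of the tensorized leading operator provides $\mu_n(h)$ to the required order, and the Agmon localization absorbs $R_h$ into a negligible quadratic-form error. The heart of the argument, and its main obstacle, is precisely the control of $R_h$: one must expand the metric factor $(1-\kappa(s)t)^{-2}$, the cubic Taylor remainders of $x_1$, and the variation $\kappa(s)=\kappa_0+\mathcal{O}(s)$ to a sufficient order to guarantee $\langle R_h u,u\rangle=o(h)\|u\|^2$ on functions concentrated at scales $s\sim h^{1/2-\eta}$, $t\sim h^{2/3-\eta}$, rather than the crude bound $\mathcal{O}(h)$ one would get from a naive expansion. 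This forces a careful accounting of the interplay between the two different scales $h^{1/2}$ and $h^{2/3}$, especially for the cross-terms between the normal and tangential directions.
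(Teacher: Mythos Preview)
Your sketch is the standard self-adjoint route and is essentially correct; it is presumably close to what the cited reference does, since Theorem~\ref{thm.0} is quoted from \cite{CKPRS22} rather than proved in this paper. The present article does not give a separate proof of this statement: it is recovered as the special case $\alpha=0$ of the main Theorem~\ref{thm.1}.

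The method behind Theorem~\ref{thm.1} is genuinely different from yours. Because the paper targets the non-self-adjoint range $\alpha>0$, it cannot rely on the min-max principle or the spectral theorem. Instead it performs an analytic dilation in the normal variable (Section~\ref{sec.2}) to produce an isospectral operator $\mathscr{M}_{h,\alpha}$, proves Agmon localization for its eigenfunctions (Proposition~\ref{prop.loch23}), and then compares $\mathscr{M}_{h,\alpha}$ with the tensorized model $\mathscr{N}_{h,\alpha}$ via resolvent estimates and Riesz projectors (Propositions~\ref{prop.resolventmodel}--\ref{prop.finale}). Simplicity of the eigenvalues is obtained not by min-max monotonicity but by bounding the rank of each Riesz projector, including a separate argument to exclude Jordan blocks. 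Your approach buys directness when $\alpha=0$ (self-adjointness makes the two-sided min-max bounds routine once the remainder $R_h$ is controlled), whereas the paper's approach buys uniformity over the whole range $\alpha\in[0,3\pi/5)$ at the price of the complex-scaling and Riesz-projector machinery.
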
	

The eigenvalue splitting given by the second term
containing the curvature is experimentally spectacular,
for it enables one to determine the shape of a convex nanostructure
by imposing uniform electric fields in various directions~\cite{CKPRS22b}.

Second, there have been an intensive study of the operator~\eqref{operator}
for the purely imaginary choice $\alpha=\frac{\pi}{2}$
in various geometric settings
(and even for more general electric potentials)
\cite{A08,
Almog-Helffer-Pan_2013,Almog-Helffer-Pan_2012,
H13, AH16, GH18,
AGH19,Almog-Grebenkov-Helffer_2018,
Almog-Helffer_2020,Grebenkov-Moutal-Helffer,
Semoradova-Siegl}.
Among the variety of physical motivations mentioned in these references,
let us point out the Bloch--Torrey equation describing
the diffusion-precession of spin-bearing
particles in nuclear magnetic resonance experiments.

In particular,
in \cite[Theorem 1.1]{GH18}, quasimodes are constructed and allow to conjecture the behavior of the eigenvalues with the smallest real part. Motivated by these constructions, the behavior of the real part of the left-most spectrum has then been analyzed, see \cite[Theorem 1.6]{AGH19}. For our linear electric potential, we can apply, for instance, \cite[Theorem 4.1.1]{H13} and \cite[Theorem 1.1]{AH16}, and we get the following typical result.
	\begin{theorem}[\cite{AH16}]\label{thm.AGH}
Assume $\alpha=\frac{\pi}{2}$ and Assumption \ref{geoassumption0}.
Then
\begin{equation}\label{as2}
		\inf\Re \mathrm{sp}(\mathscr{L}_ {h,\alpha})
		= \frac{z_1h^{\frac23}}{2} + o(h^{\frac23})
\end{equation}
as $h \to 0$.		
	\end{theorem}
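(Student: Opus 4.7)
The strategy is a semiclassical analysis localized at the boundary point $A_0=(0,0)$ where $x_1$ attains its minimum on $\overline{\Omega}$. After straightening the boundary in a chart at $A_0$ so that, to leading order, $\Omega$ looks like $\{x_1 > \tfrac{\kappa_0}{2} x_2^2 + O(x_2^3)\}$, I perform the anisotropic rescaling $x_1 = h^{2/3} X_1$, $x_2 = h^{1/3} X_2$, under which the operator $\mathscr{L}_{h,\pi/2}$ takes the form
\begin{equation*}
h^{2/3}\bigl[\, -\partial_{X_1}^2 + i X_1 - h^{2/3}\partial_{X_2}^2 \,\bigr]
\end{equation*}
on a domain converging to the parabolic half-plane $\{X_1 > \tfrac{\kappa_0}{2} X_2^2\}$. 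The principal part $-\partial_{X_1}^2 + iX_1$ on the half-line $\R_+$ with Dirichlet at the origin is the complex Airy operator; via the complex rotation $X_1 = e^{-i\pi/3} Y$ it becomes the real Airy operator $-\partial_Y^2+Y$, whose Dirichlet eigenvalues are the zeros $\{z_j\}_{j\geq 1}$. Hence the complex Airy operator has spectrum $\{e^{i\pi/3} z_j\}_{j\geq 1}$, whose smallest real part equals $\cos(\pi/3)\,z_1 = z_1/2$.

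For the upper bound I construct a quasimode. Let $\phi_0$ be the $L^2$-normalized Dirichlet ground state of the complex Airy operator on $\R_+$, and let $\chi\in C_c^\infty(\R)$ be a compactly supported tangential cutoff. In the chart at $A_0$, set
\begin{equation*}
\psi_h(x) = \phi_0\!\left( h^{-2/3}\bigl(x_1 - \tfrac{\kappa_0}{2}x_2^2\bigr) \right)\chi\!\left(h^{-1/3} x_2\right),
\end{equation*}
pulled back through the straightening diffeomorphism and multiplied by a smooth cutoff supported near $A_0$. A direct semiclassical Taylor expansion gives $\|(\mathscr{L}_{h,\pi/2} - e^{i\pi/3} z_1 h^{2/3})\psi_h\| = o(h^{2/3})\|\psi_h\|$. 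Because $\mathscr{L}_{h,\pi/2}$ has compact resolvent and the leading model, consisting of the complex Airy operator in $X_1$ perturbed by the complex harmonic oscillator $-h^{2/3}\partial_{X_2}^2 + i\tfrac{\kappa_0}{2}X_2^2$ in $X_2$, is an explicitly diagonalizable tensor product, a Grushin-type reduction or a Cauchy-integral argument around the targeted spectral value promotes this quasimode estimate to a genuine eigenvalue of $\mathscr{L}_{h,\pi/2}$ within distance $o(h^{2/3})$ of $e^{i\pi/3} z_1 h^{2/3}$. Taking real parts yields $\inf \Re \mathrm{sp}(\mathscr{L}_{h,\pi/2}) \leq z_1 h^{2/3}/2 + o(h^{2/3})$.

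The lower bound is the main obstacle. Let $\lambda(h)$ be an eigenvalue attaining the infimum and $u_h$ a normalized eigenfunction. The naive identity $\Re\lambda(h)=h^2\|\nabla u_h\|^2/\|u_h\|^2$ is too weak to locate $u_h$ in $\Omega$, and the purely imaginary potential $ix_1$ yields no direct Agmon decay. The remedy is a \emph{complex deformation} in the $x_1$ direction: conjugating $\mathscr{L}_{h,\pi/2}$ by $e^{\varphi/h}$ with a carefully chosen real weight $\varphi$, or equivalently deforming the integration contour $x_1 \mapsto x_1 + i\tau(x_1)$, produces a closed operator with the same discrete spectrum whose quadratic form acquires an effective real confining potential near $A_0$. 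An Agmon-type estimate for the deformed operator then localizes $u_h$ on the anisotropic scale $(h^{2/3},h^{1/3})$ around $A_0$, with exponentially small error. After rescaling to $(X_1,X_2)$ and freezing the boundary at its quadratic osculation, the rescaled eigenvalue problem becomes an $o(1)$-perturbation, in the norm-resolvent sense on a fixed bounded set, of the explicit tensor model complex Airy $\otimes$ complex harmonic oscillator. Continuity of discrete spectra under such perturbations then forces $\Re\lambda(h) \geq z_1 h^{2/3}/2 - o(h^{2/3})$. The decisive technical step is this non-selfadjoint localization via complex deformation; the tangential complex harmonic oscillator only contributes at order $O(h)$ to the eigenvalues, which is absorbed into the $o(h^{2/3})$ remainder.
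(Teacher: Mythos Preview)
This theorem is not proved in the paper; it is quoted from \cite{AH16}. In fact, the paper explicitly states (Remark~(iv) after Theorem~\ref{thm.1}) that its own strategy \emph{does not} recover Theorem~\ref{thm.AGH}: the lower bound \eqref{eq.lowerbound} is established only for $\alpha\in[0,\tfrac\pi2)$, and at $\alpha=\tfrac\pi2$ two networks of eigenvalues (attached to the minimum $A_0$ and the maximum $A_1$ of $x_1$) have coinciding leading real part, so the localization-at-$A_0$ argument breaks down.

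Your upper bound is fine and is essentially what the paper does for all $\alpha\in[0,\tfrac{3\pi}{5})$ via the quasimode/Riesz projector machinery of Propositions~\ref{prop.locspec}--\ref{prop.finale}. The gap is in your lower bound. You assert that after a complex deformation an Agmon estimate localizes the eigenfunction $u_h$ of smallest real part near $A_0$. This is precisely what fails at $\alpha=\tfrac\pi2$. First, your proposed deformation ``$x_1\mapsto x_1+i\tau(x_1)$'' is not meaningful on a bounded domain $\Omega$: there is no analytic dilation in the Cartesian variable $x_1$ that preserves $\Omega$. The paper's deformation acts instead in the \emph{normal} variable $t$ near $\partial\Omega$, and the resulting effective real potential is $\Re\bigl(e^{i(\alpha-\beta)}\Gamma_1(s,J_\theta(u))\bigr)$ (Lemma~\ref{lem.confinement}). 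For this to be coercive one needs $(\alpha,\beta)\in\mathcal T$, which at $\beta=0$ forces $\alpha<\tfrac\pi2$; at $\alpha=\tfrac\pi2$ one must take $\beta>0$, and then one only controls $\Re(e^{-i\beta}\lambda)$, not $\Re\lambda$ (cf.\ Proposition~\ref{prop.lbbeta} and Remark~\ref{rem.interm}). Second, and more fundamentally, there is no reason the minimizing eigenfunction should sit near $A_0$: at $\alpha=\tfrac\pi2$ the contribution from the right-most point $A_1$ has the same leading real part $\tfrac{z_1}{2}h^{2/3}$, so an argument that presupposes localization at a single boundary point cannot yield the global lower bound.

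The proof in \cite{AH16} does not proceed by localizing a putative minimizer at $A_0$; it is a global boundary-layer argument that controls the real part of the quadratic form uniformly along $\partial\Omega$. Your sketch is missing that global step.
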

As observed in \cite[Introduction]{AH16},
the lower bound in Theorem \ref{thm.AGH} can be proved without Assumption \ref{geoassumption0} (see also \cite{A08, H13}).

\bigskip

In this article we explain
the transition between $\alpha=0$ (Theorem \ref{thm.0})
and $\alpha=\frac{\pi}{2}$ (Theorem \ref{thm.AGH}).
First of all, we show how~$\alpha$
enters the constant coefficient in the first term of
the asymptotic expansions~\eqref{as1} and~\eqref{as2}.
We also aim at providing the reader with an accurate description of the spectrum by exhibiting spectral gaps in the left-most part of the spectrum
(similarly to Theorem \ref{thm.0} in the case when $\alpha=0$).
This question is all the more interesting that, when $\alpha \in (0,\pi)$, the operator $\mathscr{L}_{h,\alpha}$ is not selfadjoint and therefore, classical tools and strategies such as the min-max and spectral theorems
(used, for instance, in~\cite{CKPRS22}) have to be replaced by unconventional arguments.
Throughout this paper, we use the nickname Bloch--Torrey operator
for~\eqref{operator} even if $\alpha\not=\frac{\pi}{2}$.

\subsection{Heuristics}\label{sec.heur}
	Before stating our main results, let us explain the intuitive origin of Theorems \ref{thm.0} and \ref{thm.AGH}. This is also the opportunity to discuss the heuristics of our main theorem, which is stated in Section \ref{sec.mainresult}.
	
	 When $\alpha=0$ and under Assumptions \ref{geoassumption0}, due to the Agmon estimates, we can check that the eigenfunctions associated with the lowest eigenvalues are localized near $A_0$.  We will see that such a localization behavior persists in some sense for certain eigenfunctions when $\alpha\in[0,\pi]$, especially for those associated with the left-most eigenvalues when $\alpha\in\left[0,\frac\pi2\right)$. Anyway, this naively suggests to use the classical tubular coordinates near the (outer) boundary
defined through the map
\begin{equation} \label{def.gamma}
\Gamma(s,t)=\gamma(s)-t\mathbf{n}(s)=(\Gamma_1(s,t), \Gamma_2(s,t)),
\end{equation}
where~$\gamma$ is the arc-length parametrization of the outer boundary of $\Omega$, denoted by $\partial\Omega_0$,
and~$\mathbf{n}$ is the outward pointing normal of~$\Omega$. Let $L>0$ be the half-length of $\partial\Omega_0$ and consider the torus $\mathbb{T}_{2L}=\mathbb{R}/(2L\mathbb{Z})$. The map $\Gamma$ induces a smooth diffeomorphism from
$B_{\delta_0}=\mathbb{T}_{2L}\times(0,\delta_0)$
to the tubular neighborhood $T_{\delta_0}$
of width $\delta_0 > 0$ of $\partial\Omega_0$
lying inside~$\Omega$. 
In the coordinates $(s,t)$, the operator~\eqref{operator} becomes
	\[-h^2(1-t\kappa(s))^{-1}\partial_t(1-t\kappa(s))\partial_t-h^2(1-t\kappa(s))^{-1}\partial_s(1-t\kappa(s))^{-1}\partial_s+e^{i\alpha}\Gamma_1(s,t)\,,\]
	acting in the Hilbert space $L^2(B_{\delta_0},(1-t\kappa(s))\mathrm{d}s\mathrm{d}t)$.
Here the curvature function~$\kappa$ is defined via
the Frenet formula $\mathbf{n}'=\kappa \gamma'$.	
	
	 According to Assumption \ref{geoassumption0}
	 (which involves $\Gamma(0,0)=A_0$),
	 we have $\Gamma_1(s,t)=t+\frac{\kappa_0}{2}s^2+\mathscr{O}(ts^2+|s|^3)$. Since $1-t\kappa(s)\simeq 1$ when $t$ is small, this suggests to consider the operator
	\begin{equation}\label{eq.modelnaif}
		\mathscr{P}_{h,\alpha}=-h^2\partial_s^2+ -h^2\partial^2_t+e^{i\alpha}\left(\frac{\kappa_0}{2}s^2+t\right)\,,
	\end{equation}
	acting on $L^2(\mathbb{R}^2_+,\mathrm{d}s\mathrm{d}t)$,
	subject to Dirichlet boundary condition at $t=0$.

Taking profit of the analyticity (since it  is linear) in the variable~$t$,
we make the formal dilation $t=ue^{-i\alpha/3}$.
The model operator $\mathscr{P}_{h,\alpha}$ then becomes
\begin{equation}\label{eq.Nha0}
\mathscr{N}_{h,\alpha}=e^{\frac{2i\alpha}{3}}(h^2D_u^2+u)+h^2D_s^2+e^{i\alpha}\frac{\kappa_0 s^2}{2}\,,
\end{equation}
which is, up to multiplications by complex constants, the sum of a real Airy operator and a complex harmonic oscillator, whose resolvent and spectra are rather well-known. \emph{Heuristically}, this allows us to describe the spectrum of $\mathscr{L}_{h,\alpha}$ accurately in appropriate regions of the complex plane.		
	
\subsection{The main result}\label{sec.mainresult}
The main result of this article is the following theorem,
which can be guessed from the heuristics of the previous section.

	\begin{theorem}\label{thm.1}
	Consider $\alpha\in\left[0,\frac{3\pi}{5}\right)$ and $R>0$ with $R \not\in (2\N-1)\sqrt{\frac{\kappa_0}{2}}$. Under Assumption \ref{geoassumption0}, there exist $h_0>0$ and $N\in\mathbb{N}$ such that for all $h\in(0,h_0)$ the following holds. The spectrum of $\mathscr{L}_{h,\alpha}$ lying in the disk $D(h^{\frac23}e^{2i\alpha/3}z_1, Rh)$ is made of exactly $N$ eigenvalues of algebraic multiplicity $1$ and they satisfy, for all $n\in\{1,\ldots,N\}$,
		\begin{equation}\label{eq.eigenvalueasymptotics}
		\lambda_n(\alpha,h)=h^{\frac23}e^{2i\alpha/3}z_1+(2n-1)he^{i\alpha/2}\sqrt{\frac{\kappa_0}{2}}+o(h)
		\end{equation}
		as $h \to 0$.
		Moreover, for all $\alpha\in[0,\frac{\pi}{2})$, there exist $C, h_0>0$ such that, for all $h\in(0,h_0)$, we have
		\begin{equation}\label{eq.lowerbound}
		\inf\Re \mathrm{sp}(\mathscr{L}_ {h,\alpha})\geq z_1h^{\frac23}\cos\left(\frac{2\alpha}{3}\right)-Ch^{\frac43}\,.
		\end {equation}
		In particular,
				\[\inf\Re \mathrm{sp}(\mathscr{L}_ {h,\alpha})= z_1h^{\frac23}\cos\left(\frac{2\alpha}{3}\right)
				+o(h^{\frac23}) \]
		as $h \to 0$.		
	\end{theorem}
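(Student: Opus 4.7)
The plan is to follow the heuristic outline of Section~\ref{sec.heur} and turn the formal reduction to the tensorized operator~\eqref{eq.Nha0} into a rigorous spectral comparison. First I pass to the tubular coordinates $(s,t)$ via~$\Gamma$ on a neighborhood of~$\partial\Omega_0$. Since $e^{i\alpha}\Gamma_1(s,t)$ depends analytically on~$t$, the backward complex dilation $t = u\,e^{-i\alpha/3}$ can be realized through an analytic distortion of the resolvent \emph{à la} Aguilar--Balslev--Combes. This produces a closed operator $\widetilde{\mathscr{L}}_{h,\alpha}$ with $\mathrm{sp}(\mathscr{L}_{h,\alpha}) = \mathrm{sp}(\widetilde{\mathscr{L}}_{h,\alpha})$ in the spectral window of interest. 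The hypothesis $\alpha \in [0,3\pi/5)$ is the range in which the two leading rotations, $2\alpha/3$ on the Airy part and $\alpha/2$ on the oscillator part, stay strictly inside a sector where the resulting model is sectorial and the distortion is admissible.

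After Taylor expansion of $\Gamma_1$ and of the curvature metric at $A_0$, the distorted operator differs from the tensorized model
\[
\mathscr{N}_{h,\alpha} = e^{2i\alpha/3}(-h^2\partial_u^2 + u) + \bigl(-h^2\partial_s^2 + e^{i\alpha}\tfrac{\kappa_0}{2}s^2\bigr),
\]
posed on $L^2(\mathbb{R}_+\times\mathbb{R})$ with Dirichlet at $u=0$, by remainders of lower order in the natural semiclassical rescaling $u = h^{2/3}v$, $s = h^{1/2}\sigma$. By separation of variables, and since the rotated harmonic oscillator $-h^2\partial_s^2 + e^{i\alpha}\tfrac{\kappa_0}{2}s^2$ is spectrally equivalent to $e^{i\alpha/2}$ times a standard oscillator, the spectrum of $\mathscr{N}_{h,\alpha}$ is
\[
\Bigl\{ h^{2/3} e^{2i\alpha/3} z_k + h\, e^{i\alpha/2}(2n-1)\sqrt{\tfrac{\kappa_0}{2}} : k,n\geq 1 \Bigr\},
\]
with each eigenvalue algebraically simple. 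For every $n$ with $(2n-1)\sqrt{\kappa_0/2} < R$, I transplant the corresponding model eigenfunction back through the rescalings, the complex dilation, and the tubular diffeomorphism, truncate by a cut-off concentrated near $A_0$, and verify that the result is an $o(h)$-quasimode of $\mathscr{L}_{h,\alpha}$, producing the existence of at least $N$ eigenvalues satisfying~\eqref{eq.eigenvalueasymptotics}.

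The delicate step, where I expect the main obstacle, is showing that no other eigenvalue of $\mathscr{L}_{h,\alpha}$ lies in $D(h^{2/3}e^{2i\alpha/3}z_1, Rh)$ and that each of the found ones is algebraically simple. Since classical min--max is unavailable for non-selfadjoint operators, I would build an approximate resolvent of $\widetilde{\mathscr{L}}_{h,\alpha}-z$ for $z$ on a contour enclosing the disk (the hypothesis $R \notin (2\N-1)\sqrt{\kappa_0/2}$ keeps the contour uniformly away from model eigenvalues) by gluing, via a partition of unity, the explicit resolvent of $\mathscr{N}_{h,\alpha}$ near $A_0$ with a semiclassical pseudodifferential parametrix in the region where the principal symbol $|\xi|^2 + e^{i\alpha}\Gamma_1$ is elliptic in the shifted spectral window. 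Controlling the remainder by weighted (Agmon-type) estimates in the rotated variable and then comparing the traces of the Riesz projectors of $\widetilde{\mathscr{L}}_{h,\alpha}$ and $\mathscr{N}_{h,\alpha}$ forces the algebraic multiplicities to agree and delivers~\eqref{eq.eigenvalueasymptotics}.

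Finally, the lower bound~\eqref{eq.lowerbound} for $\alpha \in [0,\pi/2)$ is proved separately without any localization near $A_0$: following the strategy of~\cite{H13,AH16}, I apply a global backward complex scaling to $\mathscr{L}_{h,\alpha}$ on $\Omega$, whose real part is then bounded below by a self-adjoint Airy-type operator with a rotated metric, yielding $z_1 h^{2/3}\cos(2\alpha/3) - Ch^{4/3}$. Combined with the upper bound $\Re\lambda_1(\alpha,h) = z_1 h^{2/3}\cos(2\alpha/3) + \mathcal{O}(h)$ from the first part of the theorem, this yields the final equality.
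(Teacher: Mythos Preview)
Your overall architecture---analytic dilation in the normal variable, reduction to the tensorized Airy/complex-oscillator model~$\mathscr{N}_{h,\alpha}$, and a Riesz-projector comparison---matches the paper's. But two steps, as written, do not go through.

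\medskip
\textbf{Quasimodes do not produce eigenvalues here.} You write that an $o(h)$-quasimode ``produces the existence of at least $N$ eigenvalues''. For a non-selfadjoint operator this inference is false in general (pseudospectra can be far from spectra), and the paper is careful never to argue this way. Existence is obtained only \emph{after} the resolvent of the dilated operator has been controlled on a small contour~$\widetilde{\mathscr{C}}_{n,h}$: one solves $(z-\mathscr{M}_{h,\alpha})\varphi_{h,z}=\psi_h$ for the transplanted model eigenfunction~$\psi_h$, proves Agmon-type bounds on~$\varphi_{h,z}$, and shows $\Pi_{n,h}\psi_h=\psi_h+o(1)$, whence $\Pi_{n,h}\neq 0$. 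Your later Riesz-projector comparison could in principle absorb this, but then the quasimode paragraph should be deleted rather than presented as giving existence. Relatedly, your parametrix sketch does not address how algebraic simplicity is forced: the paper spends a full subsection on the Jordan-block scenario, proving that a generalized eigenfunction still satisfies the optimal localization (via a Caccioppoli-type estimate $\|(\mathscr{M}_{h,\alpha}-\lambda)\varphi\|\le Ch\|\varphi\|$) so that $(\mathscr{N}_{h,\alpha}-\lambda)^2\phi_2^{\mathrm{cut}}=\mathscr{O}(h^{13/6})$ and the model projector would acquire rank~$\ge 2$, a contradiction. A Grushin reduction can replace this, but you must say how the error terms are controlled on the range of the effective Hamiltonian, not just on eigenfunctions.

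\medskip
\textbf{The lower bound is not localization-free.} There is no ``global backward complex scaling on~$\Omega$'': a dilation $x\mapsto e^{i\theta}x$ moves the domain, and the only scaling compatible with the Dirichlet problem is the one in the normal variable inside a tubular neighborhood, exactly as constructed in Section~\ref{sec.2}. After that dilation the coefficient in front of $|h\partial_u\phi|^2$ is $\Re(J_\theta'^{-2})$, which equals $\cos(2\alpha/3)$ only where the cutoff~$\chi$ equals~$1$; likewise the potential equals $\cos(2\alpha/3)\,u$ only after Taylor expansion at~$A_0$. To replace these by the exact Airy form and invoke $h^2D_u^2+u\ge z_1h^{2/3}$, the paper first proves the Agmon estimate (Proposition~\ref{prop.loch23}) localizing the eigenfunction in $\{u+s^2\lesssim h^{2/3}\}$, and this localization is used twice in the proof of Proposition~\ref{prop.lowerbound}. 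Your claim that \eqref{eq.lowerbound} follows ``without any localization near~$A_0$'' is therefore a gap; the strategy of \cite{H13,AH16} you cite is for $\alpha=\pi/2$ and does not transfer to produce the constant $\cos(2\alpha/3)$ here without the same localization step.
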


Theorem \ref{thm.1} is illustrated on Figure \ref{fig.1}: there is exactly one eigenvalue (with algebraic multiplicity) in each small circle (which has radius $o(h)$) and there is no spectrum in the gray region when $\alpha\in\left[0,\frac\pi2\right)$.

\begin{remark}~
	\begin{enumerate}[\rm (i)]
\item Theorem \ref{thm.1} gives an accurate description of the spectrum in large balls of size $h$ when $\alpha\in\left[0, \frac{3\pi}{5}\right)$, but it only states the one-term asymptotics of the eigenvalue with the
smallest real part
when $\alpha\in\left[0,\frac{\pi}{2}\right)$. When $\alpha \in \left[0,\frac{\pi}{2}\right)$, we will see that elliptic estimates using the real part of an operator
(which is isospectral to $\mathscr{L}_{h,\alpha}$)
are enough to establish the semiclassical localization near $(0,0)$ (in the Agmon sense) of the eigenfunctions associated with eigenvalues having a real part less than $Mh^{\frac23}$. This localization is the key to get the lower bound \eqref{eq.lowerbound}.  When $\alpha \in \left[\frac\pi2,\frac{3\pi}{5}\right)$, these considerations must be slightly adapted by introducing a parameter $\beta$ and by multiplying the operator by $e^{-i\beta}$. This rotation is the reason why the control of infimum of the real part is lost with our method. This aspect is discussed in more detail in Section \ref{sec.conse}.
\item Our assumptions allow us to deal with the case $\alpha=\frac\pi2$ and to get the asymptotic estimate \eqref{eq.eigenvalueasymptotics}. For more general potentials, see \cite[Theorem 1.1]{AH16}, only the existence of one eigenvalue in the disk is ensured (the one corresponding to $n=1$). Not only our theorem gives the existence of more eigenvalues, it also states that they are algebraically simple and that they are the only ones in the disk. The proof of this simplicity involves rather subtle and tedious elliptic estimates, especially to exclude the existence of Jordan blocks.
\item As we explain in Section \ref{sec.org}, the analysis used to establish Theorem \ref{thm.1} strongly relies on the analyticity of $V(x)=x_1$. However, it seems that arguments such as analytic dilations have not yet been used to investigate the spectrum of such Bloch--Torrey operators on domains. We believe that our method is of independent interest. It can easily be extended to more general analytic potentials~$V$ (still satisfying the generic assumptions in \cite{AH16, AGH19}) and we may even think that it could be used to deal with smooth~$V$ by means of almost analytic extensions.
\item  Unfortunately, our strategy does not allow us to recover Theorem \ref{thm.AGH}, even though, at a formal level, \eqref{eq.lowerbound} would give the appropriate lower bound when $\alpha=\frac\pi2$. In this case, the real parts of two networks of eigenvalues cross, see Section \ref{sec.conse}.
\item Our theorem does not say anything about the eigenfunctions (even if one could prove that they are localized near $A_0$ when $\alpha\in\left[0,\frac\pi2\right)$). Their accurate localization properties (in the Agmon sense) would be quite natural to investigate.
\end{enumerate}
\end{remark}

\begin{figure}[ht!]
	\resizebox{.6\linewidth}{!}{
	\includegraphics{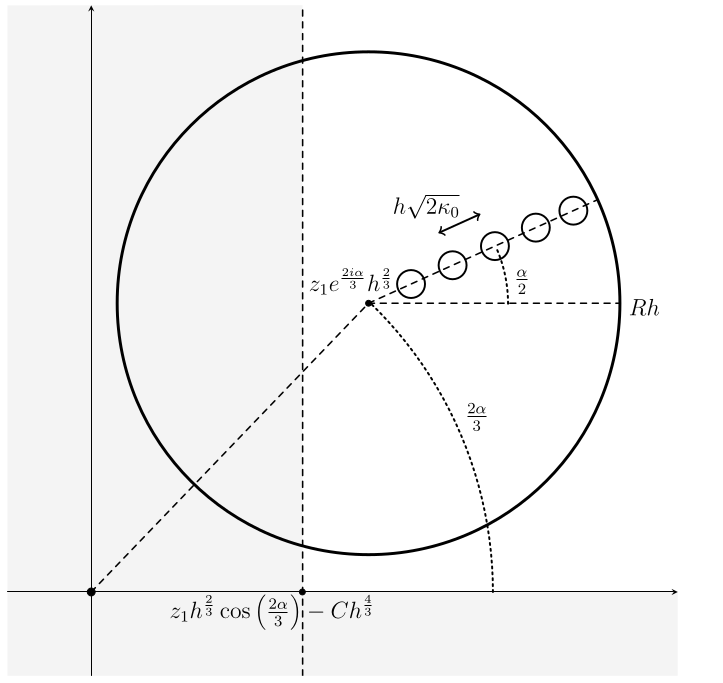}}
	\caption{The spectrum in the disk of center $z_1e^{\frac{2i\alpha}{3}}h^{\frac23}$ and radius $Rh$, when $\alpha \in \left[0, \frac{3\pi}{5}\right)$.}\label{fig.1}
\end{figure}

\subsection{Consequences and extensions}\label{sec.conse}
The analysis in this article can be used to get an \emph{a priori} location of the spectrum in the case when $\alpha \in \left[\frac\pi2,\frac{3\pi}{5}\right)$.

\begin{proposition}\label{prop.lbbeta}
Consider $\alpha\in\left[\frac\pi2,\frac{3\pi}{5}\right)$. There exist $\beta\in\left(0,\frac{\pi}{10}\right]$ with $\frac{2\alpha}{3}\in\left(\beta-\frac\pi2,\beta+\frac\pi2\right)$, $C>0$, and $h\in(0,h_0)$ such that, for all $h\in(0,h_0)$,
\[\inf \Re e^{-i\beta}\mathrm{sp}(\mathscr{L}_{h,\alpha})\geq z_1 h^{\frac23}\cos\left(\frac{2\alpha}{3}-\beta\right)-Ch^{\frac43}\,.\]
In others terms, the eigenvalues $\lambda$ of $\mathscr{L}_{h,\alpha}$ belong to the half-plane given by
\[\cos\beta\,\Re\lambda+\sin\beta\,\Im\lambda\geq z_1 h^{\frac23}\cos\left(\frac{2\alpha}{3}-\beta\right)-Ch^{\frac43}\,.\]
\end{proposition}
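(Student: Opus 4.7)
The plan is to run the proof of \eqref{eq.lowerbound} verbatim, but applied to the rotated operator $e^{-i\beta}\mathscr{L}_{h,\alpha}$ in place of $\mathscr{L}_{h,\alpha}$. Concretely, for any $\alpha \in \left[\frac{\pi}{2}, \frac{3\pi}{5}\right)$ I would take $\beta = \frac{\pi}{10}$ (or, more flexibly, any $\beta$ in the non-empty interval $(\alpha - \frac{\pi}{2}, \frac{\pi}{10}]$): such a $\beta$ lies in $(0, \frac{\pi}{10}]$, automatically satisfies the required condition $\frac{2\alpha}{3} \in (\beta - \frac{\pi}{2}, \beta + \frac{\pi}{2})$ because $\frac{2\alpha}{3} \in [\pi/3, 2\pi/5)$, and crucially ensures the three positivities $\cos\beta > 0$, $\cos(\alpha - \beta) > 0$ and $\cos(\frac{2\alpha}{3} - \beta) > 0$.

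With this choice, the self-adjoint operator
$$\Re(e^{-i\beta}\mathscr{L}_{h,\alpha}) = \cos\beta\,(-h^2 \Delta) + \cos(\alpha - \beta)\, x_1$$
has a non-negative potential on $\Omega$ by Assumption~\ref{geoassumption0}. This is precisely the sign condition that powered the Agmon-type weighted energy estimates used to prove \eqref{eq.lowerbound}. I would redo those estimates verbatim on $e^{-i\beta}\mathscr{L}_{h,\alpha}$ and conclude that every eigenfunction $u$ of $\mathscr{L}_{h,\alpha}$ whose eigenvalue $\lambda$ satisfies $\Re(e^{-i\beta}\lambda) \leq M h^{2/3}$ is exponentially concentrated, at scale $h^{2/3}$, near the point $A_0 = (0,0)$.

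Then, in the tubular coordinates $(s, t)$ of Section~\ref{sec.heur}, the potential $\Gamma_1(s, t) = t + \tfrac{\kappa_0}{2} s^2 + \mathscr{O}(t s^2 + |s|^3)$ is affine---in particular entire---in $t$, which opens the door to the backward complex dilation $t \mapsto e^{-i\alpha/3} t$ announced in the abstract. It produces an operator $\widetilde{\mathscr{L}}_{h,\alpha}$ isospectral to $\mathscr{L}_{h,\alpha}$ whose principal part, after multiplication by $e^{-i\beta}$, equals
$$e^{i(2\alpha/3 - \beta)}\bigl(-h^2 \partial_u^2 + u\bigr) + e^{-i\beta}\bigl(-h^2 \partial_s^2\bigr) + e^{i(\alpha - \beta)}\tfrac{\kappa_0}{2}\,s^2 + \mathscr{R}_h,$$
the remainder $\mathscr{R}_h$ collecting the metric factor $1 - t\kappa(s)$ and the geometric higher-order terms; on the natural microlocal scales $s = \mathscr{O}(h^{1/2})$, $t = \mathscr{O}(h^{2/3})$ singled out by the Agmon localization, one has $\mathscr{R}_h = \mathscr{O}(h^{4/3})$. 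Taking real parts, and invoking the three positivities above, the leading operator becomes a self-adjoint tensor of a rescaled real Airy operator on $\mathbb{R}_+$ (with ground state $z_1 h^{2/3}\cos(2\alpha/3 - \beta)$) and a real harmonic oscillator (with non-negative ground state). The numerical-range inclusion $\Re(e^{-i\beta}\lambda) \geq \inf \mathrm{sp}\bigl(\Re(e^{-i\beta}\widetilde{\mathscr{L}}_{h,\alpha})\bigr)$ then delivers the announced lower bound.

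The main obstacle will be the rigorous execution of the backward complex dilation on the true bounded domain $\Omega$, rather than on the half-plane model of Section~\ref{sec.heur}: one must carefully glue the dilation on a shrinking neighbourhood of $A_0$ to a cut-off far from $A_0$, keep track of the deformed functional setting so that isospectrality is preserved modulo exponentially small errors, and quantify the perturbation $\mathscr{R}_h$ together with the contribution of the region away from $A_0$. Note, finally, that the preliminary rotation by $e^{-i\beta}$---mandatory here because $\cos\alpha \leq 0$ for $\alpha \geq \pi/2$---is exactly what makes the method yield a lower bound on $\inf\Re e^{-i\beta}\mathrm{sp}(\mathscr{L}_{h,\alpha})$ but no longer on $\inf\Re \mathrm{sp}(\mathscr{L}_{h,\alpha})$ itself, in line with Remark~(i) following Theorem~\ref{thm.1}.
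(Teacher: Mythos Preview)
Your strategy---rotate by $e^{-i\beta}$, perform the complex dilation in the normal variable, then extract the real Airy lower bound---is exactly the paper's (see Remark~\ref{rem.interm}, which reruns Section~\ref{sec.roughloc} with $e^{-i\beta}\mathscr{M}_{h,\alpha}$ in place of $\mathscr{M}_{h,\alpha}$). Your choice $\beta=\frac{\pi}{10}$ is also the one singled out in Lemma~\ref{lem.defT}, and the three cosine positivities you check are precisely the content of $(\alpha,\beta)\in\mathcal{T}$.

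There is, however, a genuine ordering gap. You run the Agmon estimates on $e^{-i\beta}\mathscr{L}_{h,\alpha}$ \emph{before} dilating, obtaining localization for eigenfunctions of $\mathscr{L}_{h,\alpha}$; you then invoke the numerical-range inclusion on the dilated operator $\widetilde{\mathscr{L}}_{h,\alpha}$. But the quadratic-form identity $\Re(e^{-i\beta}\lambda)\|\widetilde\psi\|^2=\Re\langle e^{-i\beta}\widetilde{\mathscr{L}}_{h,\alpha}\widetilde\psi,\widetilde\psi\rangle$ involves an eigenfunction $\widetilde\psi$ of the \emph{dilated} operator, which is not the analytic continuation of the original eigenfunction in any usable pointwise sense, and your step~1 says nothing about its localization. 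Without that, the remainder $\mathscr{R}_h$ (metric corrections $m_\theta^{-2}-1=\mathscr{O}(u)$, transition-region variation of $J_\theta'^{-2}$, potential terms $\mathscr{O}(us^2+|s|^3)$) cannot be shown to be $\mathscr{O}(h^{4/3})$: those bounds require $\widetilde\psi$ itself to live at the scales $u\sim h^{2/3}$, $s\sim h^{1/3}$ (not $h^{1/2}$---that finer scale only comes later, in Proposition~\ref{prop.locs}).

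The fix is simply to swap the order: first construct the analytic family and pass to $\mathscr{M}_{h,\alpha}=\mathscr{L}_{h,\alpha,-i\alpha/3}$ (Section~\ref{sec.2}), then run the Agmon argument directly on eigenfunctions of $\mathscr{M}_{h,\alpha}$ via $\Re\langle e^{-i\beta}\mathscr{M}_{h,\alpha}\,\cdot\,,\,\cdot\,\rangle$ (this is Proposition~\ref{prop.loch23}, whose coercivity inputs are exactly Lemmas~\ref{lem.lbe-iaJ} and~\ref{lem.confinement}). Only then does the cutoff-and-Taylor step of Section~\ref{sec.roughloc} produce the clean Airy form $\cos(\tfrac{2\alpha}{3}-\beta)(h^2D_u^2+u)$ with an $\mathscr{O}(h^{4/3})$ error.
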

Proposition \ref{prop.lbbeta} is illustrated by Figure \ref{fig.2}: there is no spectrum on the left of the dashed oblique line. In fact, in this case, there exist eigenvalues with a smaller real part (as one can see on the same figure). They are related to the right-most part of the domain.

\begin{figure}[ht!]
		\resizebox{.6\linewidth}{!}{
	\includegraphics{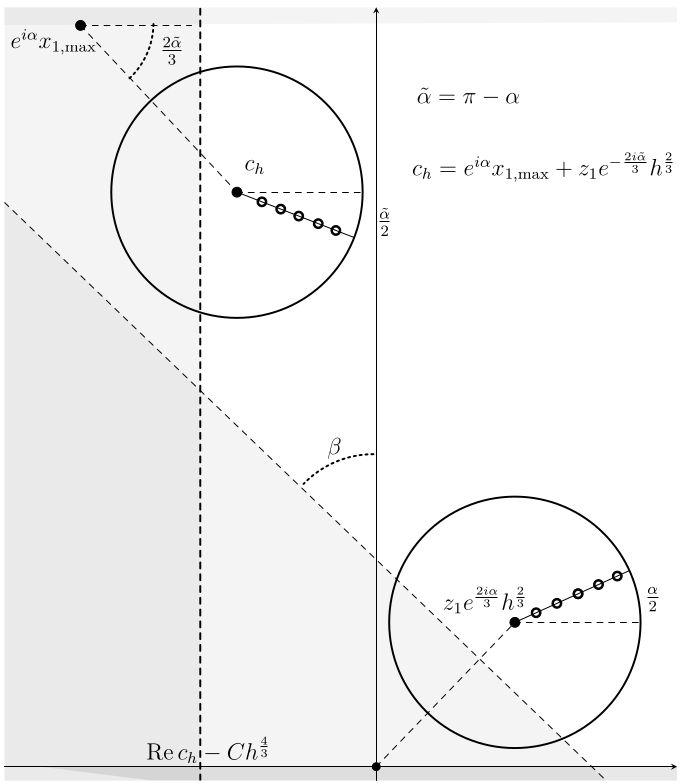}}
	\caption{Case when $\alpha\in\left(\frac\pi2,\frac{3\pi}{5}\right)$.}\label{fig.2}
\end{figure}

\begin{assumption}\label{geoassumption2}
		The maximum	
		$\max\{ x_1 : x\in\overline{\Omega} \} = x_{1,\max}$
		 is uniquely attained at a point~$A_1$. Moreover, the curvature $\kappa_1$ of $\partial\Omega$ at $A_1$ is positive.
	\end{assumption}
	We let $\tilde\alpha=\pi-\alpha$ and we consider $\alpha\in\left(\frac{2\pi}{5},\pi\right]$ so that $\tilde\alpha\in\left[0,\frac{3\pi}{5}\right)$.
		Then, the affine change of variable $y=F(x)= (-x_1 + x_{1,\max},x_2)$ transforms $\mathscr{L}^*_{h,\alpha}$ into the unitarily equivalent operator
	\[U^*\mathscr{L}^*_{h,\alpha}U=\mathscr{L}_{h,\tilde\alpha}+e^{-i\alpha}x_{1,\max}\,,\quad \mathrm{Dom}(\mathscr{L}_{h,\tilde\alpha})=H^2(\tilde\Omega)\cap H^1_0(\tilde\Omega)\,,\quad\tilde\Omega=F(\Omega)\,.\]
Therefore, under Assumption \ref{geoassumption2}, we can apply Theorem \ref{thm.1} to $\mathscr{L}_{h,\tilde\alpha}$ and we get the following.
\begin{corollary}\label{cor.1}
Consider $\alpha\in\left(\frac{2\pi}{5},\pi\right]$ and $R>0$ with $R \not\in (2\N-1)\sqrt{\frac{\kappa_1}{2}}$. Then, the spectrum of $\mathscr{L}_{h,\alpha}$ lying in the disk $D(e^{i\alpha} x_{1,\max} + h^{\frac23}e^{-2i\tilde{\alpha}/3}z_1, Rh)$ is made of $N$ eigenvalues of algebraic multiplicity $1$ and satisfying, for all $n\in\{1,\ldots,N\}$,
\[
\tilde{\lambda}_n(\alpha,h)=e^{i\alpha}x_{1,\max} + h^{\frac23}e^{-2i\tilde{\alpha}/3}z_1
+(2n-1)he^{-i\tilde{\alpha}/2}\sqrt{\frac{\kappa_1}{2}}+o(h)
\]
as $h \to 0$.
Moreover, when $\alpha \in \left(\frac\pi2, \pi\right]$,
\[
\inf\Re \mathrm{sp}(\mathscr{L}_ {h,\alpha})= \cos(\alpha)x_{1,\max} + z_1h^{\frac23}\cos\left(\frac{2\tilde{\alpha}}{3}\right)
 + o(h^{\frac23})
\]
as $h \to 0$.
	\end{corollary}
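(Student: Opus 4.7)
The plan is to reduce the statement to Theorem \ref{thm.1} by combining two observations: that the spectrum of a closed operator and of its adjoint are complex-conjugate, and that the affine reflection $F(x)=(-x_1+x_{1,\max},x_2)$ introduced before the corollary conjugates $\mathscr{L}_{h,\alpha}^*$ to a translate of $\mathscr{L}_{h,\tilde\alpha}$ on $\tilde\Omega=F(\Omega)$. Under Assumption \ref{geoassumption2}, the maximizing point $A_1$ of $\Omega$ is sent to the minimizing point of $\tilde\Omega$, where the inward curvature is the same $\kappa_1>0$, so Assumption \ref{geoassumption0} holds for $\tilde\Omega$. Since $\alpha\in\left(\frac{2\pi}{5},\pi\right]$ we have $\tilde\alpha=\pi-\alpha\in\left[0,\frac{3\pi}{5}\right)$, and the main theorem applies to $\mathscr{L}_{h,\tilde\alpha}$ on~$\tilde\Omega$.

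First I would carry the disk statement across the reduction. The unitary equivalence from the previous subsection gives
\[
\mathrm{sp}(\mathscr{L}_{h,\alpha}^*)=\mathrm{sp}(\mathscr{L}_{h,\tilde\alpha})+e^{-i\alpha}x_{1,\max}
\quad\text{and}\quad
\mathrm{sp}(\mathscr{L}_{h,\alpha})=\overline{\mathrm{sp}(\mathscr{L}_{h,\alpha}^*)},
\]
with algebraic multiplicities preserved (both operations are preserved by conjugation and translation). Hence an eigenvalue of $\mathscr{L}_{h,\alpha}$ lies in $D(e^{i\alpha}x_{1,\max}+h^{2/3}e^{-2i\tilde\alpha/3}z_1,Rh)$ if and only if the corresponding eigenvalue of $\mathscr{L}_{h,\tilde\alpha}$ lies in $D(h^{2/3}e^{2i\tilde\alpha/3}z_1,Rh)$, since the center's shift and conjugation turn the one disk into the other. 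The exclusion $R\notin(2\N-1)\sqrt{\kappa_1/2}$ is exactly the hypothesis needed to apply Theorem \ref{thm.1} to $\mathscr{L}_{h,\tilde\alpha}$. Taking the complex conjugate of the asymptotics \eqref{eq.eigenvalueasymptotics} for $\mathscr{L}_{h,\tilde\alpha}$ and adding $e^{-i\alpha}x_{1,\max}$ (then conjugating the sum) produces
\[
\tilde\lambda_n(\alpha,h)=e^{i\alpha}x_{1,\max}+h^{\frac23}e^{-2i\tilde\alpha/3}z_1+(2n-1)he^{-i\tilde\alpha/2}\sqrt{\tfrac{\kappa_1}{2}}+o(h),
\]
and the algebraic simplicity is transported intact.

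For the lower bound I would use that complex conjugation preserves real parts, so
\[
\inf\Re\,\mathrm{sp}(\mathscr{L}_{h,\alpha})=\inf\Re\,\mathrm{sp}(\mathscr{L}_{h,\alpha}^*)=\cos(\alpha)\,x_{1,\max}+\inf\Re\,\mathrm{sp}(\mathscr{L}_{h,\tilde\alpha}).
\]
For $\alpha\in\left(\frac\pi2,\pi\right]$ we have $\tilde\alpha\in\left[0,\frac\pi2\right)$, a range where the final identity of Theorem \ref{thm.1} gives $\inf\Re\,\mathrm{sp}(\mathscr{L}_{h,\tilde\alpha})=z_1h^{2/3}\cos\!\left(\tfrac{2\tilde\alpha}{3}\right)+o(h^{2/3})$. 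Substituting yields the claimed asymptotics. No step is really an obstacle; the only thing to check carefully is that the algebraic multiplicity and the counting inside the disk are preserved by the reflection and the conjugation, which follows from the fact that both are implemented by explicit antilinear/linear isomorphisms on the generalized eigenspaces.
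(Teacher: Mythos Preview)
Your proposal is correct and follows exactly the approach indicated in the paper: the paper does not give a separate proof of the corollary but simply states, just before it, the unitary equivalence $U^*\mathscr{L}^*_{h,\alpha}U=\mathscr{L}_{h,\tilde\alpha}+e^{-i\alpha}x_{1,\max}$ and then says the result follows by applying Theorem~\ref{thm.1} to $\mathscr{L}_{h,\tilde\alpha}$ on $\tilde\Omega$. You have merely spelled out the details (conjugation of spectra under the adjoint, transport of the disk and of algebraic multiplicities, and the range check $\tilde\alpha\in[0,\tfrac\pi2)$ for the infimum statement), all of which are implicit in the paper's one-line derivation.
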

When $\alpha \in \left(\frac{2\pi}{5}, \frac{3\pi}{5}\right)$, under Assumptions \ref{geoassumption0} and \ref{geoassumption2}, Theorem \ref{thm.1} and Corollary \ref{cor.1} apply. Therefore, we have coexistence of (at least) two networks of eigenvalues. This phenomenon is illustrated on Figure \ref{fig.2} when $\alpha \in \left(\frac{\pi}{2}, \frac{3\pi}{5}\right)$: we see that the right-most part of $\Omega$ determines the left-most part of the spectrum.

\subsection{Organization and strategy}\label{sec.org}
The article is organized as follows.
In Section \ref{sec.2}, we make the analytic dilation argument rigorous.
We introduce an analytic family of operators $(\mathscr{L}_{h,\alpha,\theta})_{\theta\in\Theta}$ in the sense of Kato. To do so, we use a real dilation with respect to the distance to the outer boundary $t$ (acting only near the boundary). This is where we take advantage of the fact that $\Gamma$ (see \eqref{def.gamma}) is always analytic in $t$. When $\theta\in\R$, $\mathscr{L}_{h,\alpha,\theta}$ is isospectral to $\mathscr{M}_{h,\alpha}=\mathscr{L}_{h,\alpha}$ (see Lemma \ref{lem.isoR}). By the Kato theory, it is also isospectral to $\mathscr{L}_{h,\alpha,-i\frac\alpha3}$ (see Corollary \ref{cor.iso}).

In Section \ref{sec.loestimates}, we see that this special choice of complex parameter is particularly convenient since we can prove that the eigenfunctions of $\mathscr{M}_{h,\alpha}$ associated with eigenvalues located in a half-plane of the form $\Re(e^{-i\beta}\lambda)\leq Mh^{\frac23}$ are exponentially localized near $A_0$, see Proposition \ref{prop.loch23}. The introduction of the parameter $\beta$ and the constraint on $\alpha\in\left[0,\frac{3\pi}{5}\right)$ originate from these localization arguments, which are based on ellipticity/coercivity estimates, induced by the complex electric potential (after the change of coordinates and the complex dilation), see Lemma \ref{lem.confinement}.

Section \ref{sec.spectralanalysis} is devoted to the spectral analysis of $\mathscr{M}_{h,\alpha}$. When $\alpha\in\left[0,\frac\pi2\right)$, the asymptotic estimate of the infimum part of the spectrum is obtained, see Proposition \ref{prop.lowerbound} and its proof given in Section \ref{sec.roughloc}. This proves \eqref{eq.lowerbound} (see also Remark \ref{rem.interm}, which proves Proposition~\ref{prop.lbbeta}). When $\alpha\in\left[0,\frac{3\pi}{5}\right)$, we first prove that the spectrum in the disk mentioned in Theorem~\ref{thm.1} is necessarily close (essentially at a distance of order $h^{\frac32}$) to the eigenvalue of a model operator $\mu_n(h,\alpha)=h^{\frac23}e^{2i\alpha/3}z_1+(2n-1)he^{i\alpha/2}\sqrt{\frac{\kappa_0}{2}}$, see Proposition \ref{prop.locspec}. Then, we prove that there is exactly one eigenvalue
(with algebraic muliplicity $1$) in these small discs (see Proposition~\ref{prop.finale} and Figure \ref{fig.1}), and we deduce Theorem \ref{thm.1}.

The proof of Proposition \ref{prop.locspec} relies on three important ingredients. First, it requires resolvent estimates of $\mathscr{N}_{h,\alpha}$ (we recall that $\mathscr{N}_{h,\alpha}$ is given in \eqref{eq.Nha0}), see Proposition \ref{prop.resolventmodel}. The fact that we performed an analytic dilation in $t$ is a crucial help to get the control of the resolvent (by estimating the real part of the operator). The second ingredient is to show that the eigenfunctions associated with the eigenvalues in our disc are good quasimodes for $\mathscr{N}_{h,\alpha}$, see Proposition \ref{prop.quasimode}. To do so, we need to prove optimal localization estimates with respect to the curvilinear abscissa $s$ (see Proposition \ref{prop.locs}) in order to estimate the remainders of order $s^3$ when Taylor expanding the electric potential. We stress that estimating the real part of  $\mathscr{M}_{h,\alpha}$ is a key to get such estimates (and that this argument succeeds thanks to the analytic dilation). Proposition \ref{prop.quasimode} and the resolvent estimate are then enough to locate the spectrum in the small discs.

The fact that the rank of the Riesz projector is at most one requires more work. This is where the third ingredient comes into play. We assume that this rank is at least two and even that we have a Jordan block (in the worst scenario) and we prove that a generalized eigenfunction also satisfies accurate localization estimates, see Section \ref{sec.quasimodes2} and especially Proposition~\ref{prop.quasimode2}. This part of the proof is technically more involved and it is somewhat reminiscent of Caccioppoli estimates, see Proposition \ref{prop.controlf}. There remains to estimate the Riesz projectors to get a contradiction, see Section \ref{sec.Riesz}. To prove that the projectors are non-zero, we consider a quasimode built from the Airy and Hermite functions, see Section \ref{sec.quaRiesz}.

\section{The analytic dilation}\label{sec.2}
	
\subsection{The sesquilinear form}	
Before introducing the main idea of this paper,
let us stress that the operator~$\mathscr{L}_{h,\alpha}$ from~\eqref{operator}
is rigorously introduced via its
sesquilinear form defined on $H^1_0(\Omega)$ by
	\[L_{h,\alpha}(\varphi,\psi)=\int_{\Omega}\nabla\varphi\,\overline{\nabla\psi}\,\mathrm{d}x+e^{i\alpha}\int_{\Omega} x_1\varphi\,\overline{\psi}\,\mathrm{d}x\,.\]
Notice that
	\[\mathrm{Re}\,L_{h,\alpha}(\psi,\psi)\geq \|\nabla\psi\|^2
	-\sup_{x\in\Omega}{|x_1|} \, \|\psi\|^2\,,\]
which enables to apply the standard Lax--Milgram theorem.
An elementary argument shows that
	\[\mathrm{sp}(\mathscr{L}_{h,\alpha})\subset\{\lambda\in\mathbb{C} :
	0\leq\Im\lambda\leq (\sin\alpha)\sup_{\Omega}x_1\}\,.\]

	\subsection{An isospectral operator}
Following the intuition described in Section~\ref{sec.heur},
we would like to perform a complex scaling in the normal variable to the outer boundary. By doing that, we will preserve the spectrum as soon as we have a family of type (B) in the sense of Kato \cite[Chap.~VII]{Kato2}.
This will reveal some hidden elliptic properties of the new operator.
	
 Let $\delta\in(0,\delta_0)$ where we recall that $\delta_0$ is defined just after \eqref{def.gamma}. The heuristic considerations of Section \ref{sec.heur} lead to introduce the following unitary transform $\mathscr{U}_\theta$, depending on the real parameter $\theta$. For all $\varphi\in L^2(\Omega)$, we let
\[\mathscr{U}_\theta\varphi=(\varphi_{|\Omega\setminus T_\delta}, \varphi_{|T_\delta}\circ \Gamma(s,J_\theta(u)))\,,\]
with $J_\theta$ given by
\begin{equation*}
t = J_\theta(u)=ue^{\theta\chi(u)}\,,
\end{equation*}
where $\chi$ is non-increasing smooth function
from $[0,\delta]$ to $[0,1]$
such that
 $\chi=1$ near $0$ and $\chi=0$ near $\delta$.
 For all $\epsilon>0$, we can choose $\chi$ so that
\begin{equation}\label{eq.boundchi'}
\|\chi'\|_\infty\leq \frac{1+\epsilon}{\delta}\,.
\end{equation}
Note that $t = J_\theta(u)= ue^{\theta}$ near $0$ and that $t=J_\theta(u)=u$ at a distance larger than $\delta$ of the outer boundary and that the change of variable is smooth in between.
There exists $\theta_0>0$ such that, for all $\theta\in(-\infty,\theta_0)$, the map $J_\theta : (0,\delta)\to(0,\delta)$ is smooth diffeomorphism and, for all $u\in(0,\delta)$,
\begin{equation} \label{eq.jprimetheta}
J'_\theta(u)=(1+\theta u\chi'(u))e^{\theta\chi(u)}>0\,.
\end{equation}
We let
\begin{equation} \label{eq.mtheta}
m_\theta(s,u)=1-J_\theta(u)\kappa(s)\,,
\end{equation}
where $\kappa(s)$  is the curvature at the point of curvilinear coordinate $s$.
Thanks to a change of variables, we have the following.
\begin{lemma}\label{lem.Etheta}
For all $\theta\in(-\infty,\theta_0)$, $\mathscr{U}_\theta$ is an isometry from $L^2(\Omega)$ to the product $E_\theta:=L^2(\Omega\setminus T_\delta)\times L^2(B_\delta,m_\theta(s,u)J'_\theta(u)\mathrm{d}s\mathrm{d}u)$. As vector spaces, we have $E_\theta=E_0=L^2(\Omega\setminus T_\delta)\times L^2(B_\delta)$.
\end{lemma}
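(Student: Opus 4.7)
The plan is to verify both assertions by an explicit change of variables combined with a comparison of the weights $m_\theta J'_\theta$ and $1$ on $B_\delta$.

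First I would split any $\varphi\in L^2(\Omega)$ via $\varphi = \mathbf{1}_{\Omega\setminus T_\delta}\varphi + \mathbf{1}_{T_\delta}\varphi$, so that only the component living on the tubular neighbourhood $T_\delta$ is affected by $\mathscr{U}_\theta$. On that component, the map $\Phi_\theta(s,u)=\Gamma(s,J_\theta(u))$ is, for $\theta\in(-\infty,\theta_0)$, a smooth diffeomorphism from $B_\delta$ onto $T_\delta$: indeed $\Gamma$ itself is a diffeomorphism from $\mathbb{T}_{2L}\times(0,\delta_0)$ onto $T_{\delta_0}$ with Jacobian determinant $1-t\kappa(s)$, and $J_\theta:(0,\delta)\to(0,\delta)$ is a diffeomorphism by \eqref{eq.jprimetheta}. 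The chain rule then gives the Jacobian determinant of $\Phi_\theta$ equal to $(1-J_\theta(u)\kappa(s))J'_\theta(u)=m_\theta(s,u)J'_\theta(u)$, and this quantity is positive on $B_\delta$: $J'_\theta(u)>0$ by \eqref{eq.jprimetheta}, and, up to shrinking $\delta$ so that $\delta\sup_s|\kappa(s)|e^{\theta_0}<1$, we also have $m_\theta(s,u)>0$.

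Second, I apply the standard change-of-variables formula to the restriction to $T_\delta$:
\begin{equation*}
\int_{T_\delta}|\varphi(x)|^2\,\mathrm{d}x
=\int_{B_\delta}|\varphi\circ\Phi_\theta(s,u)|^2\,m_\theta(s,u)J'_\theta(u)\,\mathrm{d}s\,\mathrm{d}u,
\end{equation*}
which, combined with the trivial identity on $\Omega\setminus T_\delta$, yields $\|\mathscr{U}_\theta\varphi\|^2_{E_\theta}=\|\varphi\|^2_{L^2(\Omega)}$. Surjectivity is also immediate from the diffeomorphism property of $\Phi_\theta$, so $\mathscr{U}_\theta$ is an isometry onto $E_\theta$.

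For the second assertion, it is enough to show that the measures $m_\theta(s,u)J'_\theta(u)\,\mathrm{d}s\,\mathrm{d}u$ and $\mathrm{d}s\,\mathrm{d}u$ on $B_\delta$ are mutually absolutely continuous with densities bounded and bounded away from zero. Since $\chi$ is smooth with compact support in $[0,\delta]$, the factor $e^{\theta\chi(u)}(1+\theta u\chi'(u))$ is bounded and bounded from below away from $0$ on $(0,\delta)$ for $\theta\in(-\infty,\theta_0)$ (using \eqref{eq.boundchi'} and taking $\theta_0$ small enough), and the same holds for $m_\theta$ by the choice of $\delta$ above. Hence the set of square-integrable functions is the same for both measures, which gives the identification $E_\theta=E_0$ as vector spaces.

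The only genuine point to watch is the positivity of $m_\theta J'_\theta$ on $B_\delta$, i.e.~the compatibility between the bound \eqref{eq.boundchi'} on $\chi'$, the range of $\theta$, and the smallness of $\delta$ relative to $\|\kappa\|_\infty$; once $\theta_0$ and $\delta$ are fixed accordingly the rest is a direct calculation.
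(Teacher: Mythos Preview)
Your proof is correct and follows exactly the approach the paper has in mind: the paper's own justification is the single sentence ``Thanks to a change of variables, we have the following,'' and you have simply spelled out that change of variables together with the boundedness of the weight $m_\theta J'_\theta$ above and away from zero. One small simplification: since the paper already assumes $J_\theta:(0,\delta)\to(0,\delta)$ is a diffeomorphism, $J_\theta(u)\in(0,\delta)\subset(0,\delta_0)$ and the positivity of $m_\theta$ follows directly from the positivity of $1-t\kappa(s)$ on the original tubular neighbourhood, without a separate shrinking argument.
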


Then, let us describe the effect of $\mathscr{U}_\theta$ on the form domain of the operator $\mathscr{L}_{h,\alpha}$, which is $H^1_0(\Omega)$.
\begin{lemma}
We have
\[\begin{split}\mathscr{U}_\theta(H^1_0(\Omega))&=\{(\varphi_1,\varphi_2)\in H^1(\Omega\setminus T_\delta)\times H^1(B_\delta) : \varphi_2(s,0)=0\,\, \&\,\, \varphi_1(\Gamma(s,\delta))=\varphi_2(s,\delta)  \}\\
&=\mathscr{U}_0(H^1_0(\Omega)) \,.\end{split}\]	
\end{lemma}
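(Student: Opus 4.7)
The plan is to unfold the action of $\mathscr{U}_\theta$ through the decomposition $\Omega = (\Omega\setminus T_\delta) \cup T_\delta$ and verify that the three conditions in the right-hand side correspond respectively to (i) $H^1$-regularity on each piece, (ii) the Dirichlet condition on $\partial\Omega_0$, and (iii) the trace compatibility at the interface $\Gamma(\cdot,\delta)$. Because $J_\theta$ coincides with the identity near $\delta$, the gluing condition is $\theta$-independent, which will immediately give the second equality.

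First I would record that for $\theta\in(-\infty,\theta_0)$ the map $(s,u)\mapsto \Gamma(s,J_\theta(u))$ is, by construction of $\chi$ and by \eqref{eq.jprimetheta}, a smooth diffeomorphism from $B_\delta$ onto $T_\delta$ with bounded Jacobian bounded away from~$0$. Therefore, composition with this diffeomorphism is a topological isomorphism between $H^1(T_\delta)$ and $H^1(B_\delta)$ (the same is true for $L^2$ and the isometry statement is Lemma~\ref{lem.Etheta}). Since $\mathscr{U}_\theta$ is the identity on $\Omega\setminus T_\delta$, for any $\varphi\in H^1_0(\Omega)$ the pair $(\varphi_1,\varphi_2)=\mathscr{U}_\theta\varphi$ belongs to $H^1(\Omega\setminus T_\delta)\times H^1(B_\delta)$.

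Next I would treat the two boundary/interface conditions. On one side, $u=0$ corresponds via $\Gamma(s,J_\theta(0))=\Gamma(s,0)=\gamma(s)\in\partial\Omega_0$, so that the Dirichlet condition $\varphi_{|\partial\Omega_0}=0$ translates exactly into $\varphi_2(s,0)=0$ in the trace sense. On the other side, near $u=\delta$ we have $J_\theta(u)=u$, so $\Gamma(s,J_\theta(\delta))=\Gamma(s,\delta)$; the global $H^1_0(\Omega)$-regularity of~$\varphi$ is then equivalent (by the standard characterization of $H^1$ on a domain split by a smooth interface via matching traces) to $\varphi_1(\Gamma(s,\delta))=\varphi_2(s,\delta)$. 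Conversely, any pair $(\varphi_1,\varphi_2)$ satisfying the three conditions on the right-hand side defines a unique $\varphi\in H^1_0(\Omega)$ via $\mathscr{U}_\theta^{-1}$, showing the reverse inclusion.

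The identity $\mathscr{U}_\theta(H^1_0(\Omega))=\mathscr{U}_0(H^1_0(\Omega))$ is then immediate: the set described on the right-hand side does not involve~$\theta$ at all (only the values of $\Gamma$ at $u=0$ and $u=\delta$ enter, and at these points $J_\theta$ is~$0$ or the identity). I expect the main technical point to be the careful justification of the trace matching at $u=\delta$; this is essentially a gluing lemma for $H^1$ across a smooth hypersurface, which is standard but worth stating explicitly since the two pieces $\Omega\setminus T_\delta$ and $T_\delta$ have been parametrized by different coordinates.
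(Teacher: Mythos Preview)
Your proposal is correct and follows essentially the same route as the paper. The paper's proof is terser: it observes that traces are well defined by standard trace theorems, checks the matching condition $\varphi_1(\Gamma(s,\delta))=\varphi_2(s,\delta)$ for smooth $\varphi$, and then invokes density of smooth functions in $H^1_0(\Omega)$ together with the fact that $\mathscr{U}_\theta$ is an isometry to conclude. You instead argue directly via the $H^1$-boundedness of composition with the diffeomorphism $(s,u)\mapsto\Gamma(s,J_\theta(u))$ and the gluing lemma for $H^1$ across a smooth interface; this is the same content unpacked, and your remark that $J_\theta$ is the identity near $u=\delta$ is exactly the reason the right-hand side is $\theta$-independent.
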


\begin{proof}
Let us first notice that, by standard trace theorems, the functions $\phi_2$ is well defined a.e. on $\{t=\delta\}$ (where coordinates $t$ and $u$ coincide), as well as $\phi_1 \circ \Gamma$ on $\partial B_\delta$. For a smooth function $\phi$, we have $\varphi_1(\Gamma(s,\delta))=\varphi_2(s,\delta)$ and the result follows by density and the fact that $\mathscr{U}_\theta$ is an isometry.
\end{proof}

Let us now consider the quadratic form induced by $\mathscr{U}_\theta$ from $L_{h,\alpha}$.

\begin{proposition}\label{prop.Uthetaell}
Letting, for all $\varphi\in\mathscr{U}_0(H^1_0(\Omega))$,
\[\begin{split}
	\lefteqn{
	\ell_{h,\alpha,\theta}(\varphi,\varphi)
	=	\int_{\Omega\setminus T_\delta}
	\left( |h\nabla\varphi_1|^2+e^{i\alpha}x_1|\varphi_1|^2 \right)
	\mathrm{d}x
	}
	\\
	&+\int_{B_\delta} (m_\theta^{-2}|h\partial_s\varphi_2|^2+[J'_\theta]^{-2}|h\partial_u\varphi_2|^2+e^{i\alpha}\Gamma_1(s,J_\theta(u))|\varphi_2|^2)m_\theta J'_\theta(u)\mathrm{d}s\mathrm{d}u\,,
	\end{split}\]
we have
\[L_{h,\alpha}(\mathscr{U}^{-1}_\theta\varphi,\mathscr{U}^{-1}_\theta\varphi)=\ell_{h,\alpha,\theta}(\varphi,\varphi)\,.\]
\end{proposition}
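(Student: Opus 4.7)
The proof is, at its heart, two successive changes of variables, so the plan is to split the form, pull back, and then bookkeep the Jacobians carefully.

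First I would decompose $L_{h,\alpha}(\mathscr{U}_\theta^{-1}\varphi, \mathscr{U}_\theta^{-1}\varphi)$ as a sum of two integrals: one over $\Omega\setminus T_\delta$ and one over $T_\delta$. On $\Omega\setminus T_\delta$ the isometry $\mathscr{U}_\theta$ acts as the identity on the first component, so $\mathscr{U}_\theta^{-1}\varphi = \varphi_1$ there; the contribution is immediately the first integral in the definition of $\ell_{h,\alpha,\theta}$ with no computation needed.

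Next I would treat the $T_\delta$ contribution by the pullback through $\Gamma$, i.e.\ the map $x = \Gamma(s,t)$. Since $\Gamma$ is induced by an arc-length parametrisation of $\partial\Omega_0$ together with the inner normal, the differential $d\Gamma(s,t)$ has columns $(1 - t\kappa(s))\gamma'(s)$ and $-\mathbf{n}(s)$, which are orthogonal; hence the Jacobian determinant equals $1-t\kappa(s)$ (positive on $B_{\delta_0}$) and the pulled-back Euclidean metric is diagonal with entries $(1-t\kappa(s))^2$ and $1$. This gives
\[
  |\nabla_x\phi|^2 = (1-t\kappa(s))^{-2}|\partial_s\tilde\phi|^2 + |\partial_t\tilde\phi|^2,
  \quad \tilde\phi = \phi\circ\Gamma,
\]
and $x_1 = \Gamma_1(s,t)$, so the $T_\delta$ part of $L_{h,\alpha}$ becomes the integral over $B_\delta$ of
$(1-t\kappa(s))^{-2}|h\partial_s\tilde\phi|^2 + |h\partial_t\tilde\phi|^2 + e^{i\alpha}\Gamma_1(s,t)|\tilde\phi|^2$
against the measure $(1-t\kappa(s))\,\mathrm{d}s\,\mathrm{d}t$.

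Finally I would apply the one-dimensional substitution $t = J_\theta(u)$ on $(0,\delta)$. By \eqref{eq.jprimetheta} this is a smooth diffeomorphism for $\theta<\theta_0$, producing the Jacobian $J'_\theta(u)$ and transforming $\partial_t = [J'_\theta(u)]^{-1}\partial_u$; the weight $1-t\kappa(s)$ becomes exactly $m_\theta(s,u)$ from \eqref{eq.mtheta}. Substituting and using $\tilde\phi(s,J_\theta(u)) = \varphi_2(s,u)$ converts the $T_\delta$ integral into
$\int_{B_\delta}(m_\theta^{-2}|h\partial_s\varphi_2|^2 + [J'_\theta]^{-2}|h\partial_u\varphi_2|^2 + e^{i\alpha}\Gamma_1(s,J_\theta(u))|\varphi_2|^2)\,m_\theta J'_\theta\,\mathrm{d}s\,\mathrm{d}u$,
which is precisely the second term in the definition of $\ell_{h,\alpha,\theta}$. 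The only thing to verify beyond this is that the two contributions glue correctly, but since $\chi(u)=0$ near $u=\delta$ one has $J_\theta(u)=u$ and $J'_\theta(u)=1$ there, the boundary measures agree, and the matching trace condition is ensured by the preceding lemma describing $\mathscr{U}_\theta(H^1_0(\Omega))$. The main (and only) difficulty is the careful tracking of the two Jacobians and of the orthogonal decomposition of $|\nabla_x\phi|^2$ in tubular coordinates; no analytical subtlety arises because both changes of variables are classical diffeomorphisms on their respective ranges.
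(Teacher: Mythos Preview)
Your proposal is correct and follows essentially the same approach as the paper: split into $\Omega\setminus T_\delta$ and $T_\delta$, use that $\mathscr{U}_\theta$ is the identity on the first piece, and on the second piece compose the tubular change of variables $x=\Gamma(s,t)$ with the one-dimensional dilation $t=J_\theta(u)$. Your argument is in fact more explicit than the paper's (which simply records the outcome of the kinetic-part computation and then says ``using the changes of variable $x\mapsto(s,u)$ on $B_\delta$ for the non-kinetic part completes the proof''); the extra paragraph about gluing at $u=\delta$ is not needed for this proposition, since the identity of quadratic forms is a pure change-of-variables statement and the trace matching was already handled by the preceding lemma.
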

\begin{proof}
For all $\varphi\in\mathscr{U}_0(H^1_0(\Omega))$, we let $\psi=\mathscr{U}^{-1}_\theta\varphi\in H^1_0(\Omega)$. Let us first describe the kinetic part:
\[\begin{split}
  \lefteqn{
  \int_{\Omega}|h\nabla\psi|^2\,\mathrm{d}x
=	\int_{\Omega\setminus T_\delta}
  |h\nabla\psi|^2\,\mathrm{d}x+	\int_{T_\delta}|h\nabla\psi|^2\,\mathrm{d}x
  }
\\
&= 	\int_{\Omega\setminus T_\delta}|h\nabla\varphi_1|^2\,\mathrm{d}x+\int_{B_\delta} (m_\theta^{-2}|h\partial_s\varphi_2|^2+[J'_\theta]^{-2}|h\partial_u\varphi_2|^2)m_\theta(s,u)J'_\theta(u)\mathrm{d}s\mathrm{d}u\,.
\end{split}\]	
Using the changes of variable $x \mapsto (s,u)$ on $B_\delta$ for the non-kinetic part completes the proof.
\end{proof}

From Proposition \ref{prop.Uthetaell}, we see that $\mathscr{U}_\theta\mathscr{L}_{h,\alpha}\mathscr{U}^{-1}_\theta$ is the operator associated with $\ell_{h,\alpha,\theta}$ in the ambient space $E_\theta$ (with the weighted scalar product, which depends on $\theta$).
To avoid the $\theta$-dependence of the ambient space through its scalar product, we can consider the isometry
\[\mathscr{V}_\theta: \varphi\mapsto (\varphi_1,\underbrace{m_\theta^{\frac12}(J'_\theta)^{\frac12}\varphi_2}_{=\phi_2})\,,\]
from $L^2(\Omega\setminus T_\delta)\times L^2(B_\delta,m_\theta(s,u)J'_\theta(u)\mathrm{d}s\mathrm{d}u)$ to $L^2(\Omega\setminus T_\delta)\times L^2(B_\delta,\mathrm{d}s\mathrm{d}u)$.

\begin{lemma}
Let $\varphi\in\mathscr{U}_0(H^1_0(\Omega))$ and $\phi=\mathscr{V}_\theta\varphi = (\varphi_1,\phi_2)$. We have
\begin{multline*}
\int_{B_\delta} (m_\theta^{-2}|h\partial_s\varphi_2|^2+[J'_\theta]^{-2}|h\partial_u\varphi_2|^2)m_\theta(s,u)J'_\theta(u)\mathrm{d}s\mathrm{d}u\\
=\int_{B_\delta} (m_\theta^{-2}|h\partial_s\phi_2|^2+[J'_\theta]^{-2}|h\partial_u\phi_2|^2+h^2V_\theta(s,u)|\phi_2|^2)\mathrm{d}s\mathrm{d}u+h^2\int_{-L}^L W_\theta(s)|\phi_2(s,\delta)|^2\mathrm{d}s\,,
\end{multline*}
 where, letting $X = m_\theta^{-1/2} (J_\theta')^{-1/2}$ we have
\[V_\theta= m_\theta^{-2} (\partial_s X)^2 + (J_\theta')^{-2}(\partial_uX)^2 -\partial_s \big( (m_\theta^{-2})X \partial_sX \big)-\partial_u \big((J'_\theta)^{-2}X\partial_uX \big)\]
and
\[
W_\theta(s)= (J_\theta')(s,\delta) X(s,\delta)(\partial_uX)(s,\delta)\,.
\]
\end{lemma}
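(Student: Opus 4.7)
The plan is to substitute the change of unknown $\varphi_2 = X \phi_2$ (where $X = m_\theta^{-1/2}(J'_\theta)^{-1/2}$, so that $X^{-2}$ coincides with the density $m_\theta J'_\theta$) into the left-hand side, expand the derivatives by the Leibniz rule, and integrate by parts to isolate a ``kinetic in $\phi_2$'' part from a lower-order ``potential in $\phi_2$'' part plus a boundary contribution.

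Concretely, for each direction $\xi \in \{s,u\}$ one has
\[
|\partial_\xi \varphi_2|^2 = (\partial_\xi X)^2 |\phi_2|^2 + X^2 |\partial_\xi \phi_2|^2 + X(\partial_\xi X)\,\partial_\xi |\phi_2|^2.
\]
The ``pure kinetic'' contribution $X^2 |\partial_\xi \phi_2|^2$ combines with the weight $m_\theta J'_\theta$ exactly to produce the stated $m_\theta^{-2}|h\partial_s\phi_2|^2$ and $(J'_\theta)^{-2}|h\partial_u\phi_2|^2$, thanks to the defining identity $X^{-2} = m_\theta J'_\theta$. The term $(\partial_\xi X)^2 |\phi_2|^2$ already has the form of a potential. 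The cross term $X(\partial_\xi X)\,\partial_\xi |\phi_2|^2$ is then dualised by integration by parts to produce an additional $-\partial_\xi(\cdots X \partial_\xi X)|\phi_2|^2$ potential contribution plus possible boundary terms.

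Integration by parts in $s$ produces no boundary term, as $s$ ranges over the torus $\mathbb{T}_{2L}$. Integration by parts in $u$ on $(0,\delta)$ produces two boundary contributions: the one at $u=0$ vanishes thanks to the Dirichlet condition $\phi_2(s,0)=0$, while the one at $u=\delta$ is exactly the boundary integral $h^2 \int_{-L}^{L} W_\theta(s) |\phi_2(s,\delta)|^2\,\mathrm{d}s$, with $W_\theta$ obtained by evaluating the coefficient of the cross term at $u=\delta$ (using that $\chi$, hence $J'_\theta$, trivialises there). The ``interior'' divergence part of the integration by parts, combined with the $(\partial_\xi X)^2|\phi_2|^2$ terms, then assembles into the stated $V_\theta$.

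The main obstacle is bookkeeping rather than analysis: one must carefully simplify the products $m_\theta^{-2}\cdot m_\theta J'_\theta$ and $(J'_\theta)^{-2}\cdot m_\theta J'_\theta$ against $X^2$ and $X\partial_\xi X$, and keep track of signs when moving the $\partial_\xi$ off $|\phi_2|^2$. Smoothness of $m_\theta$, $J_\theta$ and $X$ on $B_\delta$ (secured by the definition of $\chi$ and the analyticity of $\Gamma$ in $t$) together with density of smooth functions in the form domain $\mathscr{U}_0(H^1_0(\Omega))$ allows us to carry out the computation on a dense class and extend to the whole form domain by continuity.
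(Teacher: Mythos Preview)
Your proposal is correct and follows exactly the same approach as the paper: substitute $\varphi_2=X\phi_2$, expand by Leibniz, and integrate by parts in each variable, using periodicity in $s$, the Dirichlet condition at $u=0$, and the fact that $J'_\theta$ is constant near $u=\delta$ to handle the boundary contributions. The paper's own proof is a two-sentence sketch of precisely this computation (it even remarks that the exact expressions for $V_\theta$ and $W_\theta$ are unimportant beyond their smoothness), so your write-up is simply a more detailed rendering of the same argument.
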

\begin{proof}
This follows from two integrations by parts and from the fact  that $\phi_2(s,0) = 0$ \emph{a.e.} and that $J_\theta'$ is constant near $\partial B_\delta$. As we shall see later, the exact values of $V_\theta$ and $W_\theta$ are unimportant; we only note that they are smooth.
\end{proof}
These considerations lead to define the following quadratic form, in the ambient Hilbert space $L^2(\Omega\setminus T_\delta)\times L^2(B_\delta,\mathrm{d}s\mathrm{d}u)$,
 for all $\varphi\in\mathscr{U}_0(H^1_0(\Omega))$,
\begin{equation}  \label{eq.expLhat}
\begin{split}
\lefteqn{
	L_{h,\alpha,\theta}(\varphi,\varphi)
	=	\int_{\Omega\setminus T_\delta}
	\left( |\nabla\varphi_1|^2+e^{i\alpha}x_1|\varphi_1|^2 \right)
	\mathrm{d}x
	}
	\\
	&+\int_{B_\delta} (m_\theta^{-2}|h\partial_s\phi_2|^2+[J'_\theta]^{-2}|h\partial_u\phi_2|^2+(e^{i\alpha}\Gamma_1(s,J_\theta(u))+h^2V_\theta(s,u))|\phi_2|^2)\mathrm{d}s\mathrm{d}u\\
	&+h^2\int_{-L}^L W_\theta(s)|\phi_2(s,\delta)|^2\mathrm{d}s\,,
\end{split}
\end{equation}
where we recall that $\phi=\mathscr{V}_\theta\varphi = (\varphi_1,\phi_2)$.

We get the following lemma.
\begin{lemma}\label{lem.isoR}
	The operator associated with $L_{h,\alpha,\theta}$ is $\mathscr{L}_{h,\alpha,\theta}=\mathscr{V}_\theta\mathscr{U}_\theta\mathscr{L}_{h,\alpha}\mathscr{U}^{-1}_\theta\mathscr{V}_\theta^{-1}$. In particular, the spectrum of the operator $\mathscr{L}_{h,\alpha,\theta}$ is the same as that of $\mathscr{L}_{h,\alpha}$.
\end{lemma}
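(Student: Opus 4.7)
The plan is to concatenate the two changes of frame already performed --- the $\mathscr{U}_\theta$-step (Proposition \ref{prop.Uthetaell}) and the $\mathscr{V}_\theta$-step (the immediately preceding lemma producing $V_\theta$ and $W_\theta$) --- and then to invoke the one-to-one correspondence between closed sectorial forms and m-sectorial operators. Throughout I restrict to real $\theta\in(-\infty,\theta_0)$, which is the only regime relevant here since only then do $\mathscr{U}_\theta$ and $\mathscr{V}_\theta$ admit unitary extensions.

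Set $\mathscr{W}_\theta:=\mathscr{V}_\theta\mathscr{U}_\theta$. By Lemma \ref{lem.Etheta}, $\mathscr{U}_\theta$ is a unitary from $L^2(\Omega)$ onto $E_\theta$, and by its very definition $\mathscr{V}_\theta$ is a unitary from $E_\theta$ onto the $\theta$-independent space $\mathcal{H}_0:=L^2(\Omega\setminus T_\delta)\times L^2(B_\delta,\mathrm{d}s\mathrm{d}u)$. Hence $\mathscr{W}_\theta$ is a unitary from $L^2(\Omega)$ onto $\mathcal{H}_0$, and the two domain lemmas above show that $\mathscr{W}_\theta(H^1_0(\Omega))=\mathscr{V}_\theta\mathscr{U}_0(H^1_0(\Omega))$, which is precisely the set on which $L_{h,\alpha,\theta}$ was defined in \eqref{eq.expLhat}.

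Combining Proposition \ref{prop.Uthetaell} with the lemma producing $V_\theta$ and $W_\theta$ yields, for every $\psi\in H^1_0(\Omega)$,
\[
L_{h,\alpha}(\psi,\psi)=L_{h,\alpha,\theta}(\mathscr{W}_\theta\psi,\mathscr{W}_\theta\psi),
\]
and the analogous identity for the sesquilinear form then follows by polarization. Since $L_{h,\alpha}$ is closed and sectorial on $H^1_0(\Omega)$ --- being a bounded multiplicative perturbation of the Dirichlet kinetic form by $e^{i\alpha}x_1$ --- its unitary transport by $\mathscr{W}_\theta$ is closed and sectorial on $\mathscr{W}_\theta(H^1_0(\Omega))$, and that transport coincides with $L_{h,\alpha,\theta}$ by the identity just displayed. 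The uniqueness of the m-sectorial operator associated to a closed sectorial form then forces the operator associated with $L_{h,\alpha,\theta}$ to be $\mathscr{W}_\theta\mathscr{L}_{h,\alpha}\mathscr{W}_\theta^{-1}=\mathscr{V}_\theta\mathscr{U}_\theta\mathscr{L}_{h,\alpha}\mathscr{U}_\theta^{-1}\mathscr{V}_\theta^{-1}$, which is the claimed formula. The spectral statement is then immediate, since unitary equivalence preserves the spectrum.

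I do not expect a genuine obstacle here: the only point that deserves a line of verification is that the coefficients $m_\theta^{-1}$, $(J_\theta')^{-1}$, $V_\theta$ and $W_\theta$ entering $L_{h,\alpha,\theta}$ are smooth and bounded on $B_\delta$ for real $\theta\in(-\infty,\theta_0)$, so that the form is well-defined and closed on $\mathscr{W}_\theta(H^1_0(\Omega))$. This is guaranteed by \eqref{eq.jprimetheta}--\eqref{eq.mtheta} together with the very choice of $\theta_0$, which keeps $J_\theta'$ strictly positive, and by the support properties of the cut-off $\chi$, which make $V_\theta$ and $W_\theta$ compactly supported in $u$ away from $\{u=0\}$.
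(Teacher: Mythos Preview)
Your proof is correct and is precisely the natural expansion of what the paper leaves implicit: the lemma is stated there without proof, as an immediate consequence of Proposition~\ref{prop.Uthetaell} and the subsequent $\mathscr{V}_\theta$-lemma, and your argument via unitary transport of the closed sectorial form and the first representation theorem is exactly the intended mechanism. One small inaccuracy in your final paragraph: $V_\theta$ and $W_\theta$ are not compactly supported in~$u$ away from $\{u=0\}$ (near $u=0$ one has $J'_\theta\equiv e^\theta$ but $m_\theta$ still varies), though what you actually need---that they are smooth and bounded on $\overline{B_\delta}$---does hold and suffices.
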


	\subsection{Complex deformation parameters}
According to our heuristic discussion, 	we would like to consider complex $\theta$. More precisely, we would like the family $(\mathscr{L}_{h,\alpha,\theta})_{\theta\in \Theta}$ to be analytic of type (B) in the sense of Kato, where $\Theta$ is a  connected open set containing $\theta=0$ and $\theta=-i\frac{\alpha}{3}$. First, we notice that the form domain $\mathscr{U}_0(H^1_0(\Omega))$ is independent of $\theta$ and that, for all $\varphi\in \mathscr{U}_0(H^1_0(\Omega))$,
\[\Theta\ni \theta\mapsto L_{h,\alpha,\theta}(\varphi,\varphi)\in\mathbb{C}\]
is analytic. Then, it is sufficient to
check that the form $L_{h,\alpha,\theta}$ is sectorial and closed on $\mathscr{U}_0(H^1_0(\Omega))$ for $\theta\in\Theta$.
	\begin{lemma}  \label{lem.jprimebelow}
Let $\theta_0>0$ and $\beta_0\in(0,\frac\pi4)$. For $\eta>0$  let us consider the rectangle $\Theta_\eta=(-\theta_0,\eta)+i(- \beta_0,\eta)$. Then, if $\eta$ and $\delta$ are  small enough,  there exists $c>0$ such that, for all $\theta\in\Theta_\eta$, and all $u\in(0,\delta)$,
\[\Re J'^{-2}_\theta(u)\geq c>0\,.\]
		\end{lemma}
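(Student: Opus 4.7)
The plan is to use the explicit formula \eqref{eq.jprimetheta} and factor
\[
J'^{-2}_\theta(u) = \bigl(1+\theta u\chi'(u)\bigr)^{-2}\,e^{-2\theta\chi(u)},
\]
controlling the modulus and argument of each factor separately. Since $\chi(u)\in[0,1]$, the exponential factor has modulus $e^{-2(\Re\theta)\chi(u)}\in[e^{-2\eta},e^{2\theta_0}]$ and argument $-2(\Im\theta)\chi(u)\in[-2\beta_0,2\eta]$, so it stays in a fixed sector of aperture at most $2\beta_0+2\eta$ about the positive real axis.

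For the rational factor, I would first fix $\epsilon>0$ in \eqref{eq.boundchi'}, which yields the $\delta$-independent bound
\[
|u\chi'(u)| \leq u\,\|\chi'\|_\infty \leq 1+\epsilon
\]
uniformly in $u\in(0,\delta)$. On $\Theta_\eta$ one has $|\Re\theta|\leq\max(\theta_0,\eta)$ and $|\Im\theta|\leq\max(\beta_0,\eta)\leq\beta_0+\eta$, and therefore
\[
\Re\bigl(1+\theta u\chi'(u)\bigr)\geq 1-(\theta_0+\eta)(1+\epsilon),
\qquad
\bigl|\Im\bigl(1+\theta u\chi'(u)\bigr)\bigr|\leq(\beta_0+\eta)(1+\epsilon).
\]
Choosing $\theta_0$ and $\eta$ small enough (relative to the fixed $\epsilon$ and to $\beta_0<\pi/4$), the real part is bounded below by $\tfrac12$, so $1+\theta u\chi'(u)$ lies in a fixed sector around $\mathbb{R}_+$ of half-aperture $\alpha_0(\theta_0,\eta)$ with $\alpha_0\to 0$ as $\theta_0,\eta\to 0$. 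Inverting and squaring preserves the sector structure, simply doubling the opening.

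Combining the two factors, $J'^{-2}_\theta(u)$ then has modulus uniformly bounded below and total argument bounded in absolute value by $2\beta_0+2\alpha_0$. Because $\beta_0<\pi/4$ by hypothesis, a further smallness condition on $\theta_0$ and $\eta$ yields $2\beta_0+2\alpha_0<\tfrac{\pi}{2}-\sigma$ for some fixed $\sigma>0$, whence
\[
\Re J'^{-2}_\theta(u)=|J'^{-2}_\theta(u)|\,\cos\arg J'^{-2}_\theta(u)\geq c>0
\]
uniformly in $u\in(0,\delta)$ and $\theta\in\Theta_\eta$. The main care needed is in the ordering of the constants: one fixes first $\epsilon$ in \eqref{eq.boundchi'} (hence $\chi$), then notes that $\beta_0<\pi/4$ is free of a safety margin $\sigma$, and only afterwards chooses $\theta_0,\eta$ small enough to absorb the extra rotation $2\alpha_0$ coming from $(1+\theta u\chi'(u))^{-2}$. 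The parameter $\delta$ enters only through the construction of $\chi$, not through the estimates themselves, so its smallness is essentially free.
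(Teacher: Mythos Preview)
There is a genuine gap. Your central claim, that the half-aperture $\alpha_0$ of the sector containing $1+\theta u\chi'(u)$ tends to $0$ as $\theta_0,\eta\to 0$, is false. Your own estimate reads
\[
\bigl|\Im\bigl(1+\theta u\chi'(u)\bigr)\bigr|\leq(\beta_0+\eta)(1+\epsilon),
\]
and the right-hand side does \emph{not} go to $0$ with $\theta_0,\eta$: it is governed by $\beta_0=\sup|\Im\theta|$, which is fixed and may be arbitrarily close to $\pi/4$. Consequently $\alpha_0$ is of order $\arctan\beta_0$, and for $\beta_0$ near $\pi/4$ the sum $2\beta_0+2\alpha_0$ exceeds $\pi/2$, so the final cosine argument is not controlled. (There is also a secondary issue: the lemma gives $\theta_0$ as a fixed datum and only allows $\eta,\delta$ to be taken small, so you are not free to shrink $\theta_0$. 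This is reparable using the sign of $\chi'$, since in fact $\Re(1+\theta u\chi')\geq 1-\eta(1+\epsilon)$ regardless of $\theta_0$; but the main error above is not.)

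What is missing is a cancellation between the arguments of your two factors, coming from $\chi\geq 0$ and $\chi'\leq 0$. Writing $\theta=\theta_1+i\theta_2$, the total argument of $J'^{-2}_\theta$ equals $-A(u,\theta)$ with
\[
A(u,\theta)=2\arctan\!\Bigl(\frac{\theta_2 u\chi'}{1+\theta_1 u\chi'}\Bigr)+2\theta_2\chi\,.
\]
When $\theta_2\leq 0$ the two summands have \emph{opposite} signs (the first is $\geq 0$ since $u\chi'\leq 0$, the second is $\leq 0$), so instead of adding their absolute values one gets
\[
-2\beta_0\leq 2\theta_2\chi\leq A(u,\theta)\leq \frac{2\theta_2 u\chi'}{1+\theta_1 u\chi'}\leq 2(1+O(\eta))\beta_0(1+\epsilon)\,,
\]
which stays strictly inside $(-\tfrac{\pi}{2},\tfrac{\pi}{2})$ for $\eta,\epsilon$ small precisely because $\beta_0<\pi/4$. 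The case $\theta_2\in(0,\eta)$ is harmless since then $|A|=O(\eta)$. Bounding the two factors separately, as you do, discards this cancellation and cannot close.
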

	\begin{proof}
 Writing $\theta = \theta_1 + i\theta_2$ with $\theta_1, \theta_2 \in \R$
 and taking $u\in(0,\delta)$, we notice that	\Bk
\[ J'^{-2}_\theta(u)=|J'_\theta|^{-4}J'^{2}_{\overline{\theta}}=|J'_\theta|^{-4}(1+\theta_1 u\chi'-i\theta_2 u\chi')^2e^{2\theta_1\chi}e^{-2i\theta_2\chi}\,,\]
so that, by using that $\delta$ is small, we can write
\begin{equation} \label{eq.jprime} J'^{-2}_\theta(u)=|J'_\theta|^{-4}J'^{2}_{\overline{\theta}}=e^{2\theta_1\chi}|J'_\theta|^{-4}e^{-2i\arctan\left(\frac{\theta_2 u\chi'}{1+\theta_1 u\chi'}\right)-2i\theta_2\chi}  = e^{2\theta_1\chi}|J'_\theta|^{-4} e^{-iA(u,\theta)}\,,
\end{equation}
where the argument $A(u,\theta)$ is given by
\begin{equation} \label{eq.atheta}
A(u,\theta)=2\arctan\left(\frac{\theta_2 u\chi'}{1+\theta_1 u\chi'}\right)+2\theta_2\chi. \Bk
\end{equation}	

When $\theta_2$ is positive, we notice that,  for all $a>0$, by choosing $\eta$ small enough, and by using \eqref{eq.boundchi'}, we have, for all $\theta_2\in(0,\eta)$, $|A(u,\theta)|\leq a$.

 When $\theta_2$ is non-positive, namely $\theta_2\in(-\beta_0,0]$, we have, by using again \eqref{eq.boundchi'}, that
\[-2\beta_0\leq2\theta_2\leq 2\theta_2\chi\leq A(u,\theta)\leq\frac{2\theta_2 u\chi'}{1+\theta_1 u\chi'}\leq 2(1+2\eta)|\theta_2||u\chi'|\,.\]
Thus,
\[-\frac{\pi}{2}<-2\beta_0\leq A(u,\theta)\leq (1+4\eta)2\beta_0 <\frac{\pi}{2}\,.\]
 We therefore get the result by \eqref{eq.jprime}.
\end{proof}
		
		\begin{proposition}
There exist $c, C>0$ such that, for all $\varphi\in \mathscr{U}_0(H^1_0(\Omega))$,
	\[\Re L_{h,\alpha,\theta}(\varphi,\varphi)\geq c\|h\nabla\varphi\|^2_{E_0}-C\|\varphi\|^2_{E_0}\,.\]	
		\end{proposition}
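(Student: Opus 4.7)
The plan is to estimate the real part of the quadratic form~\eqref{eq.expLhat} piece by piece. On $\Omega\setminus T_\delta$, the electric part is a bounded real multiplication after taking real parts: $\Re(e^{i\alpha}x_1)$ is uniformly bounded, so
\[
\Re\!\int_{\Omega\setminus T_\delta}\!\bigl(|h\nabla\varphi_1|^2+e^{i\alpha}x_1|\varphi_1|^2\bigr)\mathrm{d}x
\geq \|h\nabla\varphi_1\|^2_{L^2(\Omega\setminus T_\delta)}-C\|\varphi_1\|^2_{L^2(\Omega\setminus T_\delta)}.
\]
On $B_\delta$ the kinetic densities are $m_\theta^{-2}|h\partial_s\phi_2|^2$ and $(J'_\theta)^{-2}|h\partial_u\phi_2|^2$, so I need a uniform lower bound for $\Re m_\theta^{-2}$ and $\Re (J'_\theta)^{-2}$ on $\Theta_\eta\times B_\delta$. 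Lemma~\ref{lem.jprimebelow} directly provides the second. For the first I use that $|J_\theta(u)|\leq \delta e^{|\theta|}$, so for $\delta$ small the coefficient $m_\theta=1-J_\theta(u)\kappa(s)$ stays in an arbitrarily small complex disk around $1$ uniformly over the parameter set, whence $\Re m_\theta^{-2}\geq c>0$. The remaining multiplicative terms $e^{i\alpha}\Gamma_1(s,J_\theta(u))$ and $h^2V_\theta$ are bounded and absorbed into $C\|\phi_2\|^2$.

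The delicate point is the boundary contribution $h^2\int_{-L}^LW_\theta(s)|\phi_2(s,\delta)|^2\mathrm{d}s$, whose sign is not controlled. I would exploit the Dirichlet condition $\phi_2(s,0)=0$ through the elementary estimate
\[
|\phi_2(s,\delta)|^2=\Bigl|\int_0^\delta\partial_u\phi_2(s,u')\mathrm{d}u'\Bigr|^2\leq \delta\int_0^\delta|\partial_u\phi_2(s,u')|^2\mathrm{d}u',
\]
which yields $h^2\int_{-L}^L|W_\theta||\phi_2(s,\delta)|^2\mathrm{d}s\leq \delta\|W_\theta\|_\infty\|h\partial_u\phi_2\|^2_{L^2(B_\delta)}$. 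Choosing $\delta$ sufficiently small relative to the coercivity constant of the kinetic piece allows this term to be absorbed, leaving a bound of the form $c\bigl(\|h\partial_s\phi_2\|^2+\|h\partial_u\phi_2\|^2\bigr)-C\|\phi_2\|^2$.

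Finally I would revert to $\varphi_2$ from $\phi_2=X^{-1}\varphi_2$, where $X^{-1}=m_\theta^{1/2}(J'_\theta)^{1/2}$ is smooth, bounded, and bounded below in modulus after the same smallness reductions. Writing $h\nabla_{s,u}\phi_2=X^{-1}h\nabla_{s,u}\varphi_2+h(\nabla_{s,u}X^{-1})\varphi_2$ and applying $|a+b|^2\geq(1-\varepsilon)|a|^2-\varepsilon^{-1}|b|^2$ with a small $\varepsilon$ converts the control of $\|h\nabla\phi_2\|^2$ into one of $\|h\nabla\varphi_2\|^2$, at the cost of an additional $C\|\varphi_2\|^2$ since $h(\nabla X^{-1})$ is bounded. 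Adding the bulk piece on $\Omega\setminus T_\delta$ gives the announced lower bound in the ambient space $E_0$.

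The main obstacle is the boundary term introduced by the $\mathscr{V}_\theta$-conjugation: because $W_\theta$ is complex and of no definite sign, it cannot be treated as an error in the right-hand side. The combination of the Dirichlet condition at the outer boundary $u=0$ and the freedom to take $\delta$ small lets this term be reabsorbed into the positive kinetic part rather than into $\|\varphi\|^2_{E_0}$.
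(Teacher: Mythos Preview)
Your proof is correct and follows the same overall architecture as the paper's: estimate the bulk piece trivially, use Lemma~\ref{lem.jprimebelow} for $\Re(J'_\theta)^{-2}$, use $m_\theta=1+\mathscr{O}(\delta)$ for the tangential coefficient, and absorb the boundary term into the kinetic part. The one place where you take a slightly different route is the boundary term $h^2\int W_\theta|\phi_2(s,\delta)|^2\mathrm{d}s$: the paper invokes the trace inequality
\[
\int_{-L}^L|\phi_2(s,\delta)|^2\mathrm{d}s\leq \epsilon\|\nabla\phi_2\|^2+C_{\delta,\epsilon}\|\phi_2\|^2
\]
with a small auxiliary $\epsilon$, whereas you exploit the Dirichlet condition $\phi_2(s,0)=0$ and the fundamental theorem of calculus to get the sharper and more elementary bound $\int|\phi_2(s,\delta)|^2\mathrm{d}s\leq\delta\|\partial_u\phi_2\|^2$. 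Your version is cleaner here since it avoids the trace theorem and produces a constant that visibly tends to $0$ with $\delta$. You are also more explicit than the paper about the final passage from $\phi_2$ back to $\varphi_2$ via $\phi_2=m_\theta^{1/2}(J'_\theta)^{1/2}\varphi_2$; the paper leaves this implicit in the equivalence of norms. Neither difference is substantive.
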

		\begin{proof}
 Thanks to \eqref{eq.expLhat},  there exists $C>0$ such that the following holds: for all $\delta,\epsilon>0$, there exists $C_{\delta,\epsilon}>0$ such that, for all $\varphi\in \mathscr{U}_0(H^1_0(\Omega))$,
\begin{multline*}
\Re L_{h,\alpha,\theta}(\varphi,\varphi)\geq 	\int_{B_\delta}|h\partial_s\phi_2|^2\mathrm{d}s\mathrm{d}u+\int_{B_\delta}\Re J'^{-2}_\theta(u)|h\partial_u\phi_2|^2\mathrm{d}s\mathrm{d}u\\
+\|h\nabla\varphi_1\|^2_{L^2(\Omega\setminus T_\delta)}-C\delta\|h\nabla\phi_2\|^2_{L^2(B_\delta)}-C\|\varphi\|^2_{E_0}-C(\epsilon \|h\nabla\phi_2\|^2+C_{\delta,\epsilon} h^2\|\phi_2\|^2)\,,
\end{multline*}
where we used the classical estimate
\[\int_{-L}^L|\phi_2(s,\delta)|^2\mathrm{d}s\leq \epsilon\|\nabla\phi_2\|^2+C_{\delta,\epsilon}\|\phi_2\|^2\,.\]
 This concludes the proof.
\end{proof}

	\begin{corollary}\label{cor.iso}
		The operators $\mathscr{L}_{h,\alpha}$ and $\mathscr{L}_{h,\alpha,-i\alpha/3}$ are isospectral.
		\end{corollary}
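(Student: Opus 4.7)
The plan is to apply Kato's theorem on analytic families of type (B) (see \cite[Chap.~VII]{Kato2}) to the family $(\mathscr{L}_{h,\alpha,\theta})$ on a suitably chosen connected open set $\Theta \subset \mathbb{C}$ containing both $\theta=0$ and $\theta=-i\alpha/3$, and then combine this with the real-$\theta$ isospectrality of Lemma \ref{lem.isoR}. Since $\alpha \in [0,\tfrac{3\pi}{5})$, one has $\alpha/3 < \pi/5 < \pi/4$, so one may pick $\beta_0 \in (\alpha/3,\pi/4)$ in Lemma \ref{lem.jprimebelow} and take $\Theta := \Theta_\eta=(-\theta_0,\eta)+i(-\beta_0,\eta)$. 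This rectangle is connected, open, and contains both $0$ and $-i\alpha/3$.

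Next I verify the three Kato hypotheses on $\Theta$. The form domain $\mathscr{U}_0(H^1_0(\Omega))$ is manifestly independent of $\theta$. For each fixed $\varphi$ in this domain, the map $\theta \mapsto L_{h,\alpha,\theta}(\varphi,\varphi)$ is holomorphic on $\Theta$, since it is a rational expression in $J_\theta(u)=ue^{\theta\chi(u)}$, $J'_\theta(u)$, $m_\theta$, $V_\theta$, $W_\theta$, all of which are entire in $\theta$, with denominators $m_\theta^2$ and $(J'_\theta)^2$ that do not vanish on $\Theta$ provided $\eta$ and $\delta$ are chosen small enough (Lemma \ref{lem.jprimebelow} for $J'_\theta$, and smallness of $\delta$ for $m_\theta$). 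Sectoriality and closedness on $\mathscr{U}_0(H^1_0(\Omega))$ are exactly the content of the preceding proposition, together with the obvious upper bound $|L_{h,\alpha,\theta}(\varphi,\varphi)| \leq C(\|h\nabla\varphi\|^2_{E_0}+\|\varphi\|^2_{E_0})$ uniform in $\theta\in\Theta$.

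Consequently, $(\mathscr{L}_{h,\alpha,\theta})_{\theta\in\Theta}$ is a selfadjoint-form analytic family of type~(B), each member being $m$-sectorial with compact resolvent (the form is coercive modulo a bounded perturbation on $H^1_0$, which embeds compactly into $L^2$). Hence $(\mathscr{L}_{h,\alpha,\theta}-\zeta)^{-1}$ is jointly analytic in $(\theta,\zeta)$ off the spectrum, eigenvalues extend analytically in $\theta$ with at most algebraic branching, and for any smooth closed contour $\gamma$ avoiding the spectrum at some base point, the Riesz projector
\[
P_\gamma(\theta):=\frac{1}{2\pi i}\oint_\gamma (\zeta-\mathscr{L}_{h,\alpha,\theta})^{-1}\,\mathrm{d}\zeta
\]
is analytic in $\theta$ wherever $\gamma$ continues to avoid the spectrum, and its (finite) rank is locally constant.

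Now I transfer the information from the real axis to the rest of $\Theta$. For real $\theta\in(-\theta_0,\eta)$, Lemma \ref{lem.isoR} yields $\mathrm{sp}(\mathscr{L}_{h,\alpha,\theta})=\mathrm{sp}(\mathscr{L}_{h,\alpha})$, hence each analytic eigenvalue branch is constant along this real segment; by the identity principle for holomorphic functions, these branches are constant throughout their connected domain of definition. Equivalently, for any $\lambda_0\in\mathrm{sp}(\mathscr{L}_{h,\alpha})$ and a small disk around it meeting the spectrum only at $\lambda_0$ when $\theta=0$, the rank of $P_\gamma(\theta)$ equals the algebraic multiplicity of $\lambda_0$ on the whole connected component where $\gamma$ avoids the spectrum; a standard covering/continuation argument along a path in $\Theta$ joining $0$ to $-i\alpha/3$ then shows that $\lambda_0\in\mathrm{sp}(\mathscr{L}_{h,\alpha,-i\alpha/3})$ with the same algebraic multiplicity, and conversely no new eigenvalue can appear (the resolvent exists on the complement of $\mathrm{sp}(\mathscr{L}_{h,\alpha})$ at $\theta=0$ and analytic continuation preserves this). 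The main delicate point will be this last continuation argument, namely ensuring that eigenvalues do not ``escape to infinity'' along the path in $\Theta$; this is handled using the uniform sectoriality estimate of the preceding proposition, which forces $\mathrm{sp}(\mathscr{L}_{h,\alpha,\theta})$ to remain in a fixed $\theta$-independent sector, so that large contours far from this sector stay in the resolvent set throughout the deformation.
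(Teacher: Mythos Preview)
Your proof is correct and follows essentially the same route as the paper: verify that $-i\alpha/3$ lies in the rectangle $\Theta_\eta$ (by choosing $\beta_0$ close enough to $\pi/4$), invoke the Kato type-(B) framework built up in the preceding section, and combine with the real-$\theta$ isospectrality of Lemma~\ref{lem.isoR}. The paper's own proof compresses all of this into two sentences (``a consequence of the analytic perturbation theory''), whereas you have spelled out the continuation argument and the uniform-sectoriality control on escaping eigenvalues explicitly; but the underlying strategy is the same.
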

	\begin{proof}
 It is a consequence of the analytic pertubation theory, upon observing that $-i\frac{\alpha}{3}\in\Theta_\eta$ since $-\beta<-\frac{\alpha}{3}$ is equivalent to $\alpha<3\beta$ which is satisfied for all $\alpha\in[0,\frac{3\pi}{4})$ as soon as $\beta$ is close enough to $\frac{\pi}{4}$.
		\end{proof}

	\section{Localization estimates} \label{sec.loestimates}
In virtue of Corollary \ref{cor.iso}, we now focus on the spectral analysis of $\mathscr{L}_{h,\alpha,-i\alpha/3}$, for which ellipticity properties are established in the present section.

\begin{definition} \label{def.m}  We denote $\mathscr{M}_{h,\alpha}=\mathscr{L}_{h,\alpha,-i\alpha/3}$ and by $M_{h,\alpha}$ the associate quadratic form on $E_0$, see \eqref{eq.expLhat} and Lemma \ref{lem.Etheta}.
\end{definition}
 In the following series of lemmas, we show bounds from below for
the potential part of $ e^{-i\beta}\mathscr{M}_{h,\alpha}$, where $\beta$ is introduced to correct a lack of coercivity of the real part when $\alpha$ is larger than $\frac\pi2$. These lemmas lead to Proposition \ref{prop.loch23}, which provides us with a precise semiclassical localization of the eigenfunctions of $\mathscr{M}_{h,\alpha}$.

The first lemma shows that the result of Lemma \ref{lem.jprimebelow} remains true if we insert $e^{-i\beta}$.

\begin{lemma}\label{lem.lbe-iaJ}
Consider $\alpha \geq 0$ and $\beta \in \R$ such that
\[\beta-\frac{2\alpha}{3}>-\frac\pi2\,,\quad \beta+\frac{2\alpha}{3}<\frac\pi2 \,.\]
Then, there exists $C(\alpha,\beta)>$0 such that, for all $u\in[0,\delta]$,
\[\Re(e^{-i\beta}J_\theta'^{-2}(u))\geq C(\alpha,\beta) \,.\]	
\end{lemma}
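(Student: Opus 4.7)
The plan is to reuse the polar identity for $J_\theta'^{-2}(u)$ derived inside the proof of Lemma \ref{lem.jprimebelow}, specialized to the complex parameter of interest $\theta=-i\alpha/3$, so $\theta_1=0$ and $\theta_2=-\alpha/3$. Substituting these values into \eqref{eq.jprime}--\eqref{eq.atheta} reduces that identity to
\[
J_\theta'^{-2}(u)=|J_\theta'(u)|^{-4}\,e^{-iA(u,\theta)},\qquad A(u,\theta)=2\arctan\!\left(-\frac{\alpha u\,\chi'(u)}{3}\right)-\frac{2\alpha\chi(u)}{3},
\]
so that $\Re(e^{-i\beta}J_\theta'^{-2}(u))=|J_\theta'(u)|^{-4}\cos(A(u,\theta)+\beta)$. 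Since $J_\theta'$ is smooth and uniformly bounded on $[0,\delta]$, the factor $|J_\theta'(u)|^{-4}$ is already bounded below by a positive constant, and the whole task reduces to confining $A(u,\theta)+\beta$ to a compact subinterval of $(-\pi/2,\pi/2)$.

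The second step is to bound the two summands of $A(u,\theta)$ separately. The function $\chi$ is non-increasing and takes values in $[0,1]$, so $\chi'\leq 0$ and $-\alpha u\chi'/3\geq 0$; invoking $|u\chi'(u)|\leq 1+\epsilon$ from \eqref{eq.boundchi'}, one obtains
\[
0\leq 2\arctan\!\left(-\frac{\alpha u\,\chi'(u)}{3}\right)\leq 2\arctan\!\left(\frac{\alpha(1+\epsilon)}{3}\right),\qquad -\frac{2\alpha}{3}\leq -\frac{2\alpha\chi(u)}{3}\leq 0.
\]
Adding these, $A(u,\theta)+\beta$ lies inside $\bigl[\beta-\tfrac{2\alpha}{3},\,\beta+2\arctan\!\bigl(\tfrac{\alpha(1+\epsilon)}{3}\bigr)\bigr]$. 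The hypothesis $\beta-2\alpha/3>-\pi/2$ handles the lower endpoint with a strict margin. For the upper endpoint, the elementary inequality $\arctan(x)<x$ on $(0,\infty)$ yields
\[
\beta+2\arctan\!\left(\frac{\alpha(1+\epsilon)}{3}\right)<\beta+\frac{2\alpha(1+\epsilon)}{3},
\]
and the strict hypothesis $\beta+2\alpha/3<\pi/2$ makes this $<\pi/2$ once $\epsilon$ is chosen small enough (which corresponds to fixing $\delta$ and $\chi$ appropriately). Consequently $\cos(A(u,\theta)+\beta)$ is uniformly bounded below by a positive constant depending only on $\alpha$ and $\beta$, and combining with the pointwise lower bound on $|J_\theta'|^{-4}$ produces the desired constant $C(\alpha,\beta)$.

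No serious obstacle is anticipated: the lemma is essentially a re-reading of Lemma \ref{lem.jprimebelow} with the rotation $e^{-i\beta}$ absorbed into the argument of $J_\theta'^{-2}$. The only point requiring a touch of care is the upper estimate, where the contribution $2\arctan(\alpha u\chi'/3)$ coming from the transition zone of the cut-off (where $\chi'\neq 0$) adds to the rotation by $2\theta_2\chi$ with a worst case governed by $(1+\epsilon)$ rather than $1$; this is precisely compensated for by tuning $\epsilon$ in \eqref{eq.boundchi'}, using the strict nature of the assumption $\beta+2\alpha/3<\pi/2$.
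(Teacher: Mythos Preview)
Your proof is correct and follows essentially the same route as the paper: write $e^{-i\beta}J_\theta'^{-2}$ in polar form via \eqref{eq.jprime}--\eqref{eq.atheta} with $\theta=-i\alpha/3$, then use $\chi'\leq 0$ and \eqref{eq.boundchi'} to trap the total phase in a compact subinterval of $(-\pi/2,\pi/2)$. The paper writes the upper bound directly as $\beta+\tfrac{2\alpha}{3}(1+\epsilon)$, whereas you pass through the sharper intermediate $\beta+2\arctan(\alpha(1+\epsilon)/3)$ before invoking $\arctan x<x$; the content is the same.
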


\begin{proof}
	We have
	\[\begin{split}
		\Re(e^{-i\beta}J_\theta'^{-2})&=\Re \left[e^{-i\beta+\frac{2i\alpha}{3}\chi}(1-i\frac\alpha3 u\chi')^{-2}\right]\\
	&=\left(1+\frac{\alpha^2}{9}(u\chi')^2\right)^{-1}
\Re\left[e^{-i\beta+\frac{2i\alpha}{3}\chi-2i\arctan(\frac\alpha3|u\chi'|)}\right]\\
\Bl &= \Re\left[e^{-i\beta-iA(u,-i\alpha/3)}\right]\,,
	\end{split}\]
where we recall $A(u,\theta)$ is defined in \eqref{eq.atheta}.
Note that, by using \eqref{eq.boundchi'}, we have, for $\epsilon>0$ small enough,
\begin{equation*}
-\frac\pi2<\beta-\frac{2\alpha}{3} \leq \beta-\frac{2\alpha}{3}\chi+2\arctan(\frac\alpha3|u\chi'|)
 \leq\beta+\frac{2\alpha}{3}(1+\epsilon)<\frac{\pi}{2}\,,
 \end{equation*}
so that $\Re (e^{-i\beta -iA(u,-i\alpha/3)})$ is uniformly bounded from below by a positive constant. This gives the result.
	\end{proof}
The following lemma is a preparation lemma in order to get the ellipticity of the electric potential in
$\mathscr{M}_{h,\alpha}$.

	\begin{lemma} \label{lem.defT}
 We let
\[	\mathcal{T} = \left\{(\alpha,\beta) \in \R^+\times \R  : \beta-\frac{2\alpha}{3}>-\frac\pi2\,,\quad \beta+\frac{2\alpha}{3}<\frac\pi2\,,\quad -\frac\pi2<\alpha-\beta<\frac\pi2\right\}\,.\]
Then
$
\sup\{ \alpha :  (\alpha, \beta) \in \mathcal{T} \} = \frac{3\pi}{5}\,.
$
Moreover, for all $\alpha\in\left[0,\frac{3\pi}{5}\right)$, we have $(\alpha, \frac{\pi}{10}) \in \mathcal{T}$. For all $\alpha \in [0, \frac{\pi}{2})$, we have $(\alpha, 0) \in \mathcal{T}$. We also have that $\left(\frac\pi2,\beta\right)\in\mathcal{T}$ for $\beta$ positive and small enough.
\end{lemma}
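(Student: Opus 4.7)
The lemma is purely a system of six linear inequalities in $(\alpha,\beta)$, so the plan is to reduce the three two-sided constraints defining $\mathcal{T}$ to a single explicit interval condition on $\beta$ depending on $\alpha$, and then read off the supremum and verify each of the three membership claims.

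First I would rewrite the defining inequalities as the pair of bounds
\[
\max\!\left(\tfrac{2\alpha}{3}-\tfrac{\pi}{2},\ \alpha-\tfrac{\pi}{2}\right) < \beta < \min\!\left(\tfrac{\pi}{2}-\tfrac{2\alpha}{3},\ \alpha+\tfrac{\pi}{2}\right).
\]
Since $\alpha \geq 0$, we have $\tfrac{2\alpha}{3}-\tfrac{\pi}{2}\leq \alpha-\tfrac{\pi}{2}$ and $\tfrac{\pi}{2}-\tfrac{2\alpha}{3}\leq \alpha+\tfrac{\pi}{2}$, so the binding constraints are
\[
\alpha-\tfrac{\pi}{2} < \beta < \tfrac{\pi}{2}-\tfrac{2\alpha}{3}.
\]
Thus $\{\beta:(\alpha,\beta)\in\mathcal{T}\}$ is non-empty if and only if $\alpha-\tfrac{\pi}{2}<\tfrac{\pi}{2}-\tfrac{2\alpha}{3}$, equivalently $\tfrac{5\alpha}{3}<\pi$, i.e. $\alpha<\tfrac{3\pi}{5}$. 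This gives $\sup\{\alpha:(\alpha,\beta)\in\mathcal{T}\}=\tfrac{3\pi}{5}$.

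For the three explicit membership statements I would plug in and check the two-sided inequality above. For $\beta=\tfrac{\pi}{10}$ one gets $\alpha-\tfrac{\pi}{2}<\tfrac{\pi}{10}$ and $\tfrac{\pi}{10}<\tfrac{\pi}{2}-\tfrac{2\alpha}{3}$, both equivalent to $\alpha<\tfrac{3\pi}{5}$, so $(\alpha,\tfrac{\pi}{10})\in\mathcal{T}$ for every $\alpha\in[0,\tfrac{3\pi}{5})$. For $\beta=0$ the conditions become $\alpha<\tfrac{\pi}{2}$ and $\alpha<\tfrac{3\pi}{4}$, so the first is binding and $(\alpha,0)\in\mathcal{T}$ exactly when $\alpha\in[0,\tfrac{\pi}{2})$. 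For $\alpha=\tfrac{\pi}{2}$ the admissible range is $\alpha-\tfrac{\pi}{2}=0<\beta<\tfrac{\pi}{2}-\tfrac{\pi}{3}=\tfrac{\pi}{6}$, so any $\beta\in(0,\tfrac{\pi}{6})$ works.

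There is no real obstacle: the only subtlety is identifying, for each $\alpha\geq 0$, which of the four linear lower/upper bounds on $\beta$ is active, and this is immediate from the sign of $\alpha$. The proof will be three or four lines of elementary algebra.
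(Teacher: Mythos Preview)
Your proof is correct and follows exactly the elementary approach the paper has in mind; in fact the paper's own proof merely states that ``these estimates follow from straightforward computations, which are conveniently supported by drawing a picture,'' and leaves the details to the reader. Your reduction to the binding two-sided bound $\alpha-\tfrac{\pi}{2}<\beta<\tfrac{\pi}{2}-\tfrac{2\alpha}{3}$ is precisely the computation the authors are alluding to.
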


 \begin{proof}
These estimates follow from straightforward computations,
 which are conveniently supported by drawing a picture.
 We leave the details to the reader.
\end{proof}

\begin{lemma}\label{lem.confinement}
 Assume that $\alpha\in\left[0,\frac{3\pi}{5}\right)$ and consider $\beta$ such that $(\alpha,\beta) \in \mathcal{T}$. There exist $s_0, \delta_0,c>0$ such that the following holds. For all $\delta\in(0,\delta_0)$, for all $(s,u)\in[-L,L)\times(0,\delta)$, if $|s|\geq s_0$, then
	\[	\Re(e^{i(\alpha-\beta)}\Gamma_1(s,J_\theta(u)))\geq c\,,\]
	and, if $|s|\leq s_0$,
	\[	\Re(e^{i(\alpha-\beta)}\Gamma_1(s,J_\theta(u)))\geq c(u+s^2)\,.\]
	\end{lemma}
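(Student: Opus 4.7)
The starting observation is that $\Gamma_1$ is affine in its second argument: $\Gamma_1(s,t) = \gamma_1(s) - t\,n_1(s)$, where $\gamma_1(s),n_1(s)\in\mathbb{R}$. Substituting $t=J_\theta(u)=u e^{-i\alpha\chi(u)/3}$ (recall $\theta = -i\alpha/3$) and distributing $e^{i(\alpha-\beta)}$ yields
\begin{equation*}
e^{i(\alpha-\beta)}\Gamma_1(s,J_\theta(u)) = e^{i(\alpha-\beta)}\gamma_1(s) - u\,e^{i(\alpha-\beta-\alpha\chi(u)/3)}n_1(s),
\end{equation*}
so $\Re\bigl(e^{i(\alpha-\beta)}\Gamma_1(s,J_\theta(u))\bigr)$ is a sum of two cosine-weighted real terms, which I bound separately using the three inequalities defining $\mathcal{T}$.

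For $|s|\geq s_0$, Assumption \ref{geoassumption0} says that $\gamma_1$ attains its unique minimum value $0$ at $s=0$, so by compactness $\gamma_1(s)\geq c_1>0$ on $\{|s|\geq s_0\}$. Since $|\alpha-\beta|<\pi/2$ by definition of $\mathcal{T}$, $\cos(\alpha-\beta)>0$, and the first term contributes at least $c_1\cos(\alpha-\beta)$. The second term has modulus bounded by $|J_\theta(u)|=u\leq\delta$ (the factor $e^{-i\alpha\chi(u)/3}$ has modulus one), so choosing $\delta_0$ small enough absorbs it and produces a uniform positive lower bound.

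For $|s|\leq s_0$, I Taylor-expand at $s=0$. Since $A_0 =(0,0)$ is the minimum of $x_1$ on $\partial\Omega_0$ one has $\gamma'(0) \perp (1,0)$ and $\mathbf{n}(0)=(-1,0)$, and the Frenet formula $\mathbf{n}' = \kappa\gamma'$ then gives $\gamma_1(s)=\tfrac{\kappa_0}{2}s^2+O(s^3)$ together with $n_1(s)=-1+O(s^2)$. Plugging in,
\begin{equation*}
\Re\bigl(e^{i(\alpha-\beta)}\Gamma_1(s,J_\theta(u))\bigr) = \tfrac{\kappa_0}{2}\cos(\alpha-\beta)\,s^2 + u\cos\bigl(\alpha-\beta-\tfrac{\alpha}{3}\chi(u)\bigr) + O(s^3+us^2).
\end{equation*}
The $s^2$-coefficient is a strictly positive constant. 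For the $u$-coefficient, as $\chi(u)$ varies in $[0,1]$ the phase $\alpha-\beta-\tfrac{\alpha}{3}\chi(u)$ stays in the interval with endpoints $\alpha-\beta$ and $\tfrac{2\alpha}{3}-\beta$, both of which lie in $(-\pi/2,\pi/2)$ thanks precisely to the three inequalities defining $\mathcal{T}$. The cosine is thus uniformly bounded below by some $c_2>0$. Finally, shrinking $s_0$ absorbs the $O(s^3)$ term into half of the $s^2$ contribution and the $O(us^2)$ term into half of the $u$ contribution, giving $\Re \geq c(u+s^2)$.

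The only delicate step --- and the reason the three inequalities of $\mathcal{T}$ are imposed jointly --- is the uniform positivity of $\cos(\alpha-\beta-\tfrac{\alpha}{3}\chi(u))$ for every $u\in(0,\delta)$: the continuous family of phases produced by the analytic dilation parameter $\theta=-i\alpha/3$ combined with the cutoff $\chi\in[0,1]$ has to remain strictly inside the open right half plane at both extremes $\chi=0$ and $\chi=1$, which is precisely what the conditions $-\pi/2<\alpha-\beta<\pi/2$ and $\beta-\tfrac{2\alpha}{3}>-\pi/2$, $\beta+\tfrac{2\alpha}{3}<\pi/2$ guarantee.
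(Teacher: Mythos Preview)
Your proof is correct and, for the case $|s|\leq s_0$, essentially identical to the paper's: same affine decomposition of $\Gamma_1$, same Taylor expansion, same verification that the phase $\alpha-\beta-\tfrac{\alpha}{3}\chi(u)$ stays in $(-\pi/2,\pi/2)$ via the defining inequalities of $\mathcal{T}$.

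For the case $|s|\geq s_0$ your argument is actually simpler than the paper's. You bound the second term crudely by $|u\,n_1(s)|\leq u\leq\delta$ and absorb it into the strictly positive first term $\cos(\alpha-\beta)\gamma_1(s)\geq c_1\cos(\alpha-\beta)$ by taking $\delta_0$ small. The paper instead rewrites $\Gamma_1(s,ue^{-i\alpha\chi/3})$ in polar form $|\Gamma_1|e^{iB(s,u)}$, shows that the real part $\gamma_1(s)-u\cos(\tfrac{\alpha}{3}\chi)n_1(s)$ of $\Gamma_1$ stays uniformly positive, and then controls $\cos(\alpha-\beta+B(s,u))$. Your shortcut works precisely because $\delta_0$ is at our disposal; the paper's route would still give positivity for somewhat larger $\delta$, but that extra generality is not needed here.
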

\begin{proof}
 Consider $(\alpha,\beta)$ as in the statement.	We have
	\begin{equation*}
	\Re(e^{i(\alpha-\beta)}\Gamma_1(s,J_\theta(u)))\\
	=\Re(e^{i(\alpha-\beta)} \Gamma_1(s,ue^{-i\frac\alpha3\chi}))\,.
	\end{equation*}
	From \eqref{def.gamma}, we have
	\[\Gamma_1(s,e^{-i\frac\alpha3\chi}u)=\gamma_1(s)-ue^{-i\frac\alpha3\chi}n_1(s)\,,\]
and thus, by using the Taylor expansion with respect to $s$ near $0$ and Assumption \ref{geoassumption0}, we get
	\[\Gamma_1(s,e^{-i\frac\alpha3\chi}u)=ue^{-i\frac\alpha3\chi}+\frac{\kappa_0}{2} s^2+\mathscr{O}(|s|^3+us^2)\,.\]
Therefore,
\begin{equation*}
	\Re(e^{i(\alpha-\beta)}\Gamma_1(s,J_\theta(u)))\\
=u\cos\left(\alpha-\beta-\frac{\alpha}{3}\chi\right)+\frac{ \kappa_0 }{2}s^2\cos(\alpha-\beta)+\mathscr{O}(|s|^3+us^2)\,.
\end{equation*}
Since $(\alpha,\beta) \in \mathcal{T}$,
\[-\frac\pi2<\frac{2\alpha}{3}-\beta\leq \alpha-\beta-\frac{\alpha}{3}\chi<\frac\pi2\,
\quad \textrm{ and } -\frac\pi2<\alpha-\beta<\frac\pi2\,
.\]
Therefore, there exist $s_0,\delta_0>0$ such that, for all $\delta\in(0,\delta_0)$, there exists $c>0$ such that, for all $s$ such  that $|s|\leq s_0$ and all $u\in(0,\delta)$, we have
\begin{equation*}
		\Re(e^{i(\alpha-\beta)}\Gamma_1(s,J_\theta(u)))
	\geq c(u+s^2)\,.
\end{equation*}

 Let us now study the case when $|s| \geq s_0$. We first introduce
\[B(s,u) =\arctan\left(\frac{u\sin(\frac\alpha3\chi(u))n_1(s)}{\gamma_1(s)-u\cos(\frac\alpha3\chi(u))n_1(s)}\right)\,,\]
so that
\[\Gamma_1(s,e^{-i\frac\alpha3\chi}u)=|\gamma_1(s)-ue^{-i\frac\alpha3\chi}n_1(s)| e^{iB(s,u)}.\,\]
We notice that when $|s|\geq s_0$, and choosing $\delta$ small enough, we have $\gamma_1(s)-u\cos(\frac\alpha3\chi)n_1(s)\geq c_0>0$ uniformly. This implies that
\begin{equation*}
\begin{split}
	\Re(e^{i(\alpha-\beta)}\Gamma_1(s,J_\theta(u)))
&\geq |\gamma_1(s)-ue^{-i\frac\alpha3\chi}n_1(s)|\cos(\alpha-\beta +B(s,u))\, \\
& \geq c_0 \cos(\alpha-\beta +B(s,u))\,.
\end{split}
\end{equation*}
From the expression of $B(s,u)$, we deduce that, with a possibly smaller $c$, we have, for all $|s|\geq s_0$,
\[	\Re(e^{i(\alpha-\beta)}\Gamma_1(s,J_\theta(u)))\geq c>0\,.\]
 This completes the proof.
	\end{proof}

The following proposition gives Agmon type localization estimates for some eigenfunctions of $\mathscr{M}_{h,\alpha}$.

\begin{proposition}\label{prop.loch23}
 Let $\alpha \in [0, \frac{3\pi}{5})$ and consider suitable parameters $\beta$, $\delta_0$ introduced in Lemma \ref{lem.confinement}. Then for any $M>0$ and $0<\delta<\delta_0$, there exists $h_0,C>0$ such that for all $h\in(0,h_0)$, all eigenvalue $\lambda$ (of $\mathscr{M}_{h,\alpha}$) such that $\Re (e^{-i\beta}\lambda)\leq Mh^{\frac23}$ and all associated eigenfunction $\varphi=(\varphi_1,\phi_2)$, we have
\begin{equation}\label{eq.AgmonL2}
\int_{\Omega\setminus T_\delta}e^{2|x|/h^{\frac23}}|\varphi_1|^2\mathrm{d}x+\int_{B_\delta}e^{2|\Gamma(s,u)|/h^{\frac23}}|\phi_2|^2\mathrm{d}s\mathrm{d}u\leq C\|\varphi\|^2_{E_0}\,,
\end{equation}
and
\[\int_{\Omega\setminus T_\delta}e^{2|x|/h^{\frac23}}|h\nabla\varphi_1|^2\mathrm{d}x+\int_{B_\delta}e^{2|\Gamma(s,u)|/h^{\frac23}}|h\nabla_{s,u}\phi_2|^2\mathrm{d}s\mathrm{d}u\leq Ch^{\frac23}\|\varphi\|^2_{E_0}\,.\]
\end{proposition}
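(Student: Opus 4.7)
I would apply the classical Agmon commutator technique, adapted to the non-selfadjoint setting through the $e^{-i\beta}$-rotation already exploited in Lemmas~\ref{lem.lbe-iaJ} and~\ref{lem.confinement}. Fix an eigenpair $(\lambda,\varphi)$ with $\Re(e^{-i\beta}\lambda)\leq Mh^{\frac23}$, and introduce a nonnegative bounded Lipschitz weight $\Phi\colon\overline{\Omega}\to\R_+$ (with Lipschitz constant as small as one needs after a suitable rescaling) that coincides with $x\mapsto|x|$ on a fixed neighborhood of $A_0$ and with $(s,u)\mapsto|\Gamma(s,u)|$ in the tubular chart, smoothly truncated far from $A_0$. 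Set $w=e^{\Phi/h^{2/3}}\varphi\in E_0$. The strategy is to test the eigenvalue equation against $e^{2\Phi/h^{2/3}}\overline{\varphi}$, multiply by $e^{-i\beta}$, and take real parts.

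\textbf{Agmon identity and coercivity.} Applying the Leibniz rule to the kinetic part of $M_{h,\alpha}$ in \eqref{eq.expLhat}, with the conjugation-by-$e^{\Phi/h^{2/3}}$ mechanism, I would obtain the identity
\[\Re\bigl(e^{-i\beta}M_{h,\alpha}(w,w)\bigr)-\Re(e^{-i\beta}\lambda)\,\|w\|^2_{E_0}=h^{\frac23}R_\Phi(w)+\mathscr{O}(h^{\frac43}\|w\|^2_{E_0})\,,\]
where $R_\Phi(w)$ is a real-valued quadratic form controlled by $\int(1+|\nabla\Phi|^2)|w|^2$, hence by $C\|w\|^2_{E_0}$ thanks to the Lipschitz bound on $\Phi$; the $\mathscr{O}(h^{\frac43})$ error absorbs the lower-order potentials $V_\theta$ and $W_\theta$ from \eqref{eq.expLhat}. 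Coupling this with Lemmas~\ref{lem.lbe-iaJ} and~\ref{lem.confinement}, the left-hand side is bounded below by
\[c\|h\nabla w\|^2_{E_0}+\int V_{\mathrm{eff}}(s,u)\,|w|^2-Mh^{\frac23}\|w\|^2_{E_0}\,,\]
where $V_{\mathrm{eff}}\geq c$ outside a small neighborhood of $A_0$ and $V_{\mathrm{eff}}(s,u)\geq c(u+s^2)$ near $A_0$.

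\textbf{Forbidden/allowed split.} Choosing $A>0$ large, the forbidden set $\mathcal{F}=\{V_{\mathrm{eff}}\geq (M+C+1)h^{\frac23}\}$ contains the complement of a neighborhood of $A_0$ of Cartesian radius $\lesssim Ah^{\frac23}$ (this uses $V_{\mathrm{eff}}\geq c(u+s^2)$ near $A_0$). On the allowed set $\mathcal{A}=\overline{\Omega}\setminus\mathcal{F}$, one has $\Phi\lesssim h^{\frac23}$, so the weight is uniformly bounded in $h$ and
\[\|w\|^2_{L^2(\mathcal{A})}\leq C\|\varphi\|^2_{E_0}\,.\]
Inserting the coercivity bound into the Agmon identity and absorbing the $h^{\frac23}R_\Phi(w)$ term into $V_{\mathrm{eff}}$ on $\mathcal{F}$ gives $\int_{\mathcal{F}}|w|^2\leq C\int_{\mathcal{A}}|w|^2$, hence \eqref{eq.AgmonL2}. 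The gradient bound then follows by retaining the $c\|h\nabla w\|^2_{E_0}$ contribution from the coercivity step and using the weighted $L^2$ bound just obtained to control the right-hand side by $Ch^{\frac23}\|\varphi\|^2_{E_0}$.

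\textbf{Main obstacle.} The principal technical difficulty lies in controlling the complex coefficients $m_\theta^{-2}$ and $[J'_\theta]^{-2}$ appearing in \eqref{eq.expLhat}: one must verify that the Agmon commutator identity retains the expected structure in the tubular chart despite the complex metric, and that the coercivity of the $e^{-i\beta}$-rotated kinetic part (from Lemma~\ref{lem.lbe-iaJ}) survives the integration by parts producing the $h^{-\frac43}|\nabla\Phi|^2$ weight. The Lipschitz condition on $\Phi$ is chosen precisely so that the commutator contribution is $\mathscr{O}(h^{\frac23}\|w\|^2)$ and can be absorbed by the potential on the forbidden region; the hypothesis $\alpha<\frac{3\pi}{5}$ ensures, via Lemma~\ref{lem.defT}, that an admissible $\beta$ can be chosen so that both the rotated kinetic and potential parts are simultaneously coercive.
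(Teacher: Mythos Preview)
Your approach is essentially the paper's: test the eigenvalue equation against $e^{2\Phi/h^{2/3}}\overline\varphi$, multiply by $e^{-i\beta}$, take real parts, invoke Lemma~\ref{lem.lbe-iaJ} for the coercivity of the rotated kinetic part and Lemma~\ref{lem.confinement} for the potential lower bound, and finish with a near/far split around $A_0$. The paper writes out the chain-rule expansion of the weighted gradient terms explicitly (and remarks that no integration by parts is performed, so no interface term on $\partial T_\delta$ appears), while you compress this into an abstract ``Agmon identity''; the content is the same.

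One step deserves more care. You assert that $\Phi\lesssim h^{2/3}$ on the allowed set $\mathcal{A}=\{V_{\mathrm{eff}}<(M+C+1)h^{2/3}\}$. Since $V_{\mathrm{eff}}\gtrsim u+s^2$ near $A_0$, the set $\mathcal{A}$ has $s$-extent of order $h^{1/3}$, and because $\gamma_2(s)\sim s$ one has $|\Gamma(s,u)|\sim|s|$ there; hence only $\Phi\lesssim h^{1/3}$ holds, and the exponential weight $e^{\Phi/h^{2/3}}$ is of size $e^{Ch^{-1/3}}$ on $\mathcal{A}$---not uniformly bounded. The paper makes precisely the analogous claim (``the exponential is bounded on $\{u+s^2\leq Rh^{2/3}\}$''), so your proposal is faithful to it; nevertheless, as literally written the conclusion $\|w\|_{L^2(\mathcal{A})}\leq C\|\varphi\|$ does not follow. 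A clean repair, sufficient for every downstream application in Section~\ref{sec.spectralanalysis}, is to run the identical argument with the anisotropic weight $\Gamma_1(s,u)\sim u+\tfrac{\kappa_0}{2}s^2$ (equivalently $x_1$) in place of $|\Gamma|$: this weight has bounded gradient, is genuinely $\lesssim h^{2/3}$ on $\mathcal{A}$, and still yields $\mathscr{O}(h^\infty)$ smallness on the support of $\nabla\chi_h$.
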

\begin{proof}
The proof essentially follows from the classical Agmon estimates.
Considering $\tilde\varphi=(e^{2|x|/h^{2/3}}\varphi_1,e^{2|\Gamma(s,u)|/h^{2/3}}\phi_2)$,  we see that $\tilde\varphi\in\mathscr{U}_0(H^1_0(\Omega))$. We have then
\begin{equation}\label{eq.Mphiphi}
\langle\mathscr{M}_{h,\alpha}\varphi,\tilde\varphi\rangle=\lambda\left(\|e^{|x|/h^{2/3}}\varphi_1\|^2_{\Omega\setminus T_\delta}+\|e^{|\Gamma(s,u)|/h^{2/3}}\phi_2\|^2_{T_\delta}\right)\,,
\end{equation}
and, recalling Definition \ref{def.m}, we can write that
\begin{equation}\label{eq.III}
e^{-i\beta}M_{h,\alpha}\left(\varphi,\tilde\varphi\right)=\mathrm{I}+\mathrm{II}+\mathrm{III}\,,
\end{equation}
with
\begin{equation*}
\mathrm{I}=e^{-i\beta}h^2\langle \nabla\varphi_1,\nabla(e^{2|x|/h^{2/3}}\varphi_1)\rangle_{\Omega\setminus T_\delta}+e^{i(\alpha-\beta)}\int_{\Omega\setminus T_\delta} x_1|e^{|x|/h^{2/3}}\varphi_1|^2\mathrm{d}x\,,
\end{equation*}
\begin{multline*}
\mathrm{II}=\int_{B_\delta} \big[e^{-i\beta}m_\theta^{-2}h^2\partial_s\phi_2\partial_s(e^{2|\Gamma|/h^{2/3}}\overline{\phi}_2)+e^{-i\beta}[J'_\theta]^{-2}h^2\partial_u\phi_2\partial_u(e^{2|\Gamma|/h^{2/3}}\overline{\phi}_2)\\
+e^{i(\alpha-\beta)}\Gamma_1(s,J_\theta(u))|e^{|\Gamma|/h^{2/3}}\phi_2|^2\big]\mathrm{d}s\mathrm{d}u\,
\end{multline*}
and
\[\mathrm{III}=h^2\int_{B_\delta}V_\theta(s,u)|e^{|\Gamma|/h^{2/3}}\phi_2|^2\mathrm{d}s\mathrm{d}u+h^2\int_{-L}^LW_\theta(s)|e^{|\Gamma|/h^{2/3}}\phi_2(s,\delta)|^2\mathrm{d}s\,.\]
Let us now bound the real part of $\mathrm{I}$ and $\mathrm{II}$ from below. We have
\[\Re\mathrm{I}=\Re e^{-i\beta}h^2\langle \nabla\varphi_1,\nabla(e^{2|x|/h^{2/3}}\varphi_1)\rangle_{\Omega\setminus T_\delta}+\cos(\alpha-\beta)\|\sqrt{x_1}e^{|x|/h^{2/3}}\varphi_1\|^2\,.\]
Then, with the chain rule, we get
\begin{equation*}
	\begin{split}
	&h^2\langle \nabla\varphi_1,\nabla(e^{2|x|/h^{2/3}}\varphi_1)\rangle_{\Omega\setminus T_\delta}\\
&	=h^2\langle e^{|x|/h^{2/3}}\nabla\varphi_1,\nabla(e^{|x|/h^{2/3}}\varphi_1)\rangle_{\Omega\setminus T_\delta}+h^{\frac43}\langle \nabla\varphi_1,(\nabla |x|)(e^{2|x|/h^{2/3}}\varphi_1)\rangle_{\Omega\setminus T_\delta}\\
&	=h^2\|\nabla(e^{|x|/h^{2/3}}\varphi_1)\|^2_{\Omega\setminus T_\delta}+\mathscr{O}(h^{\frac43})\|e^{|x|/h^{2/3}}\varphi_1\|\|\nabla(e^{|x|/h^{2/3}}\varphi_1)\|\\
	&\qquad \qquad +h^{\frac43}\langle \nabla\varphi_1,\nabla |x|(e^{2|x|/h^{2/3}}\varphi_1)\rangle_{\Omega\setminus T_\delta}\\
& 	=h^2\|\nabla(e^{|x|/h^{2/3}}\varphi_1)\|^2_{\Omega\setminus T_\delta}+\mathscr{O}(h^{\frac43})\|e^{|x|/h^{2/3}}\varphi_1\|\|\nabla(e^{|x|/h^{2/3}}\varphi_1)\|+\mathscr{O}(h^{\frac23})\|e^{|x|/h^{2/3}}\varphi_1\|^2\,.
	\end{split}
	\end{equation*}
Thus, with the Young inequality, we deduce that
\begin{multline}\label{eq.ReI}
\Re\mathrm{I}\geq c(\alpha,\beta)\left(h^2\|\nabla(e^{|x|/h^{2/3}}\varphi_1)\|^2_{\Omega\setminus T_\delta}+\|\sqrt{x_1}e^{|x|/h^{2/3}}\varphi_1\|^2_{\Omega\setminus T_\delta}\right)\\
-Ch^{\frac23}\|e^{|x|/h^{2/3}}\varphi_1\|^2_{\Omega\setminus T_\delta}\,.
\end{multline}
 Note that we did not use any integration by parts in the last computation, so that no boundary term appears. We proceed rather similarly to see that
\begin{equation*}
	\begin{split}
	&\int_{B_\delta}m_\theta^{-2}h^2\partial_s\phi_2\partial_s(e^{2|\Gamma|/h^{2/3}}\overline{\phi}_2)
\mathrm{d}s\mathrm{d}u\\	
&= \int_{B_\delta}m_\theta^{-2}h^2|\partial_s(e^{|\Gamma|/h^{2/3}}\phi_2)|^2\mathrm{d}s\mathrm{d}u
        +\mathscr{O}(h^{1/3})\|e^{|\Gamma|/h^{2/3}}\phi_2\|\|h\partial_s(e^{|\Gamma|/h^{2/3}}\phi_2)\|\\
& \qquad +\mathscr{O}(h^{\frac23})\|e^{|\Gamma|/h^{2/3}}\phi_2\|^2\\
&=	\int_{B_\delta}h^2|\partial_s(e^{|\Gamma|/h^{2/3}}\phi_2)|^2\mathrm{d}s\mathrm{d}u
     +\mathscr{O}(h^{1/3})\|e^{|\Gamma|/h^{2/3}}\phi_2\|\|h\partial_s(e^{|\Gamma|/h^{2/3}}\phi_2)\|\\
& \qquad +\mathscr{O}(\delta)\|h\partial_s(e^{|\Gamma|/h^{2/3}}\phi_2)\|^2_{B_\delta}
+\mathscr{O}(h^{\frac23})\|e^{|\Gamma|/h^{2/3}}\phi_2\|^2\,,
	\end{split}
	\end{equation*}
 where we used $m_\theta = 1+ \mathcal{O}(\delta)$ from \eqref{eq.mtheta}.
	In the same way, we get
	\begin{equation*}
	\begin{split}	&\int_{B_\delta}[J'_\theta]^{-2}h^2\partial_u\phi_2\partial_u(e^{2|\Gamma|/h^{2/3}}\overline{\phi}_2)\\	& =\int_{B_\delta}[J_\theta'(u)]^{-2}h^2|\partial_u(e^{|\Gamma|/h^{2/3}}\phi_2)|^2\mathrm{d}s\mathrm{d}u
+\mathscr{O}(h^{4/3})\|e^{|\Gamma|/h^{2/3}}\phi_2\|\|\partial_u(e^{|\Gamma|/h^{2/3}}\phi_2)\|\\
		&\qquad +\mathscr{O}(\delta)h^2\|\partial_u(e^{|\Gamma|/h^{2/3}}\phi_2)\|^2_{B_\delta}
+\mathscr{O}(h^{\frac23})\|e^{|\Gamma|/h^{2/3}}\phi_2\|^2\,.
		\end{split}
		\end {equation*}
		We deduce that
\begin{equation*}
\begin{split}
\Re \mathrm{II}\geq
& \int_{B_\delta}\cos(\alpha-\beta)h^2|\partial_s(e^{|\Gamma|/h^{2/3}}\phi_2)|^2\mathrm{d}s\mathrm{d}u\\	
&+ \int_{B_\delta}\Re(e^{-i\beta}J_\theta'(u)^{-2})h^2|\partial_u(e^{|\Gamma|/h^{2/3}}\phi_2)|^2
\mathrm{d}s\mathrm{d}u\\
&+\int_{B_\delta}\Re(e^{i(\alpha-\beta)}\Gamma_1(s,J_\theta(u)))|e^{|\Gamma|/h^{2/3}}\phi_2|^2
\mathrm{d}s\mathrm{d}u\\
&+\mathscr{O}(h^{4/3})\|e^{|\Gamma|/h^{2/3}}\phi_2\|\|\nabla(e^{|\Gamma|/h^{2/3}}\phi_2)\|
+\mathscr{O}(\delta)h^2\|\nabla(e^{|\Gamma|/h^{2/3}}\phi_2)\|^2_{B_\delta}\\
&+\mathscr{O}(h^{\frac23})\|e^{|\Gamma|/h^{2/3}}\phi_2\|^2\,.
\end{split}
\end{equation*}
With Lemma \ref{lem.lbe-iaJ}, we get, for some $c(\alpha,\beta)>0$,
\begin{equation*}
\begin{split}
\Re \mathrm{II}\geq& c(\alpha,\beta)\int_{B_\delta}h^2|\nabla(e^{|\Gamma|/h^{2/3}}\phi_2)|^2
\mathrm{d}s\mathrm{d}u\\
&+\int_{B_\delta}\Re(e^{i(\alpha-\beta)}\Gamma_1(s,J_\theta(u)))|e^{|\Gamma|/h^{2/3}}\phi_2|^2
\mathrm{d}s\mathrm{d}u\\
&+\mathscr{O}(h^{4/3})\|e^{|\Gamma|/h^{2/3}}\phi_2\|\|\nabla(e^{|\Gamma|/h^{2/3}}\phi_2)\|
+\mathscr{O}(\delta)h^2\|\nabla(e^{|\Gamma|/h^{2/3}}\phi_2)\|^2_{B_\delta}\\
			&+\mathscr{O}(h^{\frac23})\|e^{|\Gamma|/h^{2/3}}\phi_2\|^2\,,
		\end{split}
	\end{equation*}
and thus, perhaps after changing the value of $c(\alpha,\beta)>0$,
			\begin{equation}\label{eq.ReII}
	\begin{split}
		\Re \mathrm{II}\geq&  c(\alpha,\beta)\int_{B_\delta}|h\nabla(e^{|\Gamma|/h^{2/3}}\phi_2)|^2
\mathrm{d}s\mathrm{d}u
-Ch^{\frac23}\|e^{|\Gamma|/h^{2/3}}\phi_2\|^2\\
		&+\int_{B_\delta}\Re(e^{i(\alpha-\beta)}\Gamma_1(s,J_\theta(u)))|e^{|\Gamma|/h^{2/3}}\phi_2|^2
\mathrm{d}s\mathrm{d}u\,.
	\end{split}
\end{equation}
Moreover, by using a $H^{\frac12}$-trace theorem, we get that, for all $\epsilon\in(0,1)$,  there exists $C_\epsilon$ such that
\begin{equation}\label{eq.ReIII}
\Re\mathrm{III}\geq -C_\epsilon h^2\|e^{|\Gamma|/h^{2/3}}\phi_2\|^2-\epsilon\left\|h\nabla \left(e^{|\Gamma|/h^{2/3}}\right)\phi_2\right\|^2_{B_\delta}.
\end{equation}
 Now ,we can come back to \eqref{eq.Mphiphi} and \eqref{eq.III} and we deduce, from \eqref{eq.ReI}, \eqref{eq.ReII}, \eqref{eq.ReIII}, and $\Re (e^{-i\beta}\lambda)\leq Mh^{\frac23}$, that
\begin{multline*}
c\|\sqrt{x_1}e^{|x|/h^{2/3}}\varphi_1\|^2_{\Omega\setminus T_\delta}+\int_{B_\delta}\Re(e^{i(\alpha-\beta)}\Gamma_1(s,J_\theta(u)))|e^{|\Gamma|/h^{2/3}}\phi_2|^2
\mathrm{d}s\mathrm{d}u\\
\leq  M  h^{\frac23}\left(\|e^{|x|/h^{2/3}}\varphi_1\|^2_{\Omega\setminus T_\delta}+\|e^{|\Gamma(s,u)|/h^{2/3}}\phi_2\|^2_{T_\delta}\right)\,.
\end{multline*}

To conclude, we now split the integral into two parts according to the decomposition
\[B_\delta=\{(s,u)\in B_\delta : u+s^2\leq Rh^{\frac23}\}\cup B_\delta^{\mathrm{far}}\,.\]
where
\[B_\delta^{\mathrm{far}}\{(s,u)\in B_\delta : u+s^2  >  Rh^{\frac23}\}\,.\]
The rest of the proof follows  from the usual manipulations \textit{\`a la} Agmon. Indeed, thanks to Lemma \ref{lem.confinement}, there exists $C_M$ such that
\[(cRh^{\frac23}-C_Mh^{\frac23}) \big(\|e^{|x|/h^{2/3}}\varphi_1\|_{(\Omega\setminus T_\delta)\times B^{\mathrm{far}}_\delta} + \|e^{|\Gamma(s,u)|/h^{\frac23}}\phi_2\|^2_{(\Omega\setminus T_\delta)\times B^{\mathrm{far}}_\delta}\big)\leq \tilde Ch^{\frac23}\|\varphi\|^2\,,\] \Bk
where we used that the exponential is bounded on $\{(s,u)\in B_\delta : u+s^2\leq Rh^{\frac23}\}$. The estimate \eqref{eq.AgmonL2} follows by choosing $R$ large enough. Then, the estimate of the gradient follows by gathering  \eqref{eq.Mphiphi}, \eqref{eq.III}, \eqref{eq.ReI}, \eqref{eq.ReII}, \eqref{eq.ReIII} and by using \eqref{eq.AgmonL2}.

\end{proof}

	\section{Spectral analysis}  \label{sec.spectralanalysis}

The aim of this section is to prove the following three propositions, which imply Theorem \ref{thm.0} (since $\mathscr{M}_{h,\alpha}$ is isospectral to $\mathscr{L}_{h,\alpha}$, see Lemma \ref{cor.iso}).

\begin{proposition}[Rough localization of the spectrum]\label{prop.lowerbound}
 Let $\alpha\in[0,\frac{\pi}{2})$ and consider $M>0$. There exist $C,h_0>0$ such that, for all $h\in(0,h_0)$,
\begin{multline}
\mathrm{sp}(\mathscr{L}_{h,\alpha}) \cap \{  z\in\mathbb{C} \ : \  \Re(z) < M h^{\frac23} \} \\
\subset \big\{ z\in\mathbb{C} \ : \  \Re z\geq z_1h^{\frac23}\cos\left(2\alpha/3\right)-Ch^{\frac43}), \\
 0\leq \Im z\leq (\sin\alpha)\max \{ x_1 : x \in \overline{\Omega} \big\}\,.
\end{multline}	
\end{proposition}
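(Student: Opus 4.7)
By Corollary~\ref{cor.iso} it suffices to analyze $\mathscr{M}_{h,\alpha}=\mathscr{L}_{h,\alpha,-i\alpha/3}$, for which the complex scaling has already exposed the real-positive Airy structure of the potential. The imaginary-part bounds $0\leq\Im z\leq(\sin\alpha)\sup_{\overline\Omega}x_1$ follow from the elementary inclusion recalled at the start of Section~\ref{sec.2}, so the only real work is the lower bound on $\Re z$. I take an eigenpair $(\lambda,\varphi)$ of $\mathscr{M}_{h,\alpha}$ with $\Re\lambda\leq Mh^{2/3}$ and exploit Proposition~\ref{prop.loch23} with $\beta=0$, which is admissible since $(\alpha,0)\in\mathcal{T}$ for $\alpha\in[0,\pi/2)$ by Lemma~\ref{lem.defT}. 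This yields Agmon $L^2$- and $H^1$-bounds on scale $h^{2/3}$, so the mass of $\varphi$ concentrates on $B_\delta$ near $(s,u)=(0,0)$ and the contribution of $\Omega\setminus T_\delta$ to $M_{h,\alpha}(\varphi,\varphi)$ is exponentially small.

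Writing $\Re\lambda\,\|\varphi\|^2=\Re M_{h,\alpha}(\varphi,\varphi)$ and using on $B_\delta$, at $\theta=-i\alpha/3$, the near-origin expansions $J_\theta(u)=ue^{-i\alpha/3}$, $J'_\theta(u)^{-2}=e^{2i\alpha/3}+\mathscr{O}(u)$, $m_\theta^{-2}=1+\mathscr{O}(u)$, and $\Gamma_1(s,J_\theta(u))=ue^{-i\alpha/3}+\tfrac{\kappa_0}{2}s^2+\mathscr{O}(|s|^3+us^2)$, the real part of $e^{i\alpha}\Gamma_1$ becomes $\cos(2\alpha/3)\,u+\tfrac{\kappa_0}{2}\cos(\alpha)\,s^2$ modulo cubic remainders. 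Discarding the nonnegative $s^2$-potential and the nonnegative $|h\partial_s\phi_2|^2$ term (neither is needed here) yields the core estimate
\[
\Re M_{h,\alpha}(\varphi,\varphi)\geq\cos(2\alpha/3)\int_{B_\delta}\bigl(|h\partial_u\phi_2|^2+u|\phi_2|^2\bigr)\,\mathrm{d}s\,\mathrm{d}u-\mathscr{E}(h).
\]

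The remainder $\mathscr{E}(h)$ is shown to be $\mathscr{O}(h^{4/3})\|\varphi\|^2$ by extracting an exponential $e^{-c\cdot/h^{2/3}}$ from each error and absorbing it into the Agmon weights: $\int(|s|^3+us^2)|\phi_2|^2=\mathscr{O}(h^2)\|\varphi\|^2$ via $\sup_r r^3 e^{-2r/h^{2/3}}=\mathscr{O}(h^2)$; the metric error $(m_\theta^{-2}-1)|h\partial_s\phi_2|^2$ is $\mathscr{O}(h^{4/3})\|\varphi\|^2$ by combining $\sup_u u\,e^{-2u/h^{2/3}}=\mathscr{O}(h^{2/3})$ with the weighted-gradient bound $\mathscr{O}(h^{2/3})\|\varphi\|^2$ of Proposition~\ref{prop.loch23}; and the subprincipal contributions $h^2 V_\theta$ and boundary term $h^2 W_\theta$ are $\mathscr{O}(h^2)$. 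For the Airy integral, a smooth cutoff $\chi(u/\delta)$ extends $\phi_2(s,\cdot)$ to a Dirichlet test function on $\R^+$ at exponentially small cost (since $\phi_2(s,\delta)=\mathscr{O}(e^{-c/h^{2/3}})$ by Proposition~\ref{prop.loch23}), and Fubini together with the bound $-h^2\partial_u^2+u\geq z_1 h^{2/3}$ on $L^2(\R^+)$ with Dirichlet condition at $0$ gives
\[
\int_{B_\delta}(|h\partial_u\phi_2|^2+u|\phi_2|^2)\,\mathrm{d}s\,\mathrm{d}u\geq z_1 h^{2/3}\|\phi_2\|^2-\mathscr{O}(e^{-c/h^{2/3}})\|\varphi\|^2.
\]
Combining these estimates and dividing by $\|\varphi\|^2$ produces $\Re\lambda\geq z_1 h^{2/3}\cos(2\alpha/3)-Ch^{4/3}$, as required.

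\textbf{Main obstacle.} The conceptually transparent part is the reduction to the Airy problem, but the delicate step is the bookkeeping in the error analysis: every Taylor, metric, boundary, cutoff, and subprincipal contribution must be controlled at order $h^{4/3}$ or better, which is precisely what the joint $L^2$- and $H^1$-Agmon estimates of Proposition~\ref{prop.loch23} are engineered to allow. The underlying reason the argument succeeds for $\alpha\in[0,\pi/2)$ is that the choice $\theta=-i\alpha/3$ together with $\beta=0$ aligns the imaginary potential with a real-positive principal symbol whose infimum of spectrum, $z_1 h^{2/3}$, contributes to $\Re\lambda$ after multiplication by the rotation factor $\cos(2\alpha/3)$.
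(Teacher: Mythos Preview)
Your proposal is correct and follows the same strategy as the paper: reduce to $\mathscr{M}_{h,\alpha}$, invoke Proposition~\ref{prop.loch23} with $\beta=0$, Taylor-expand the coefficients at $(s,u)=(0,0)$, and feed the result into the Dirichlet--Airy lower bound $-h^2\partial_u^2+u\geq z_1h^{2/3}$. The only noteworthy difference is in the localization step. The paper introduces an explicit $h$-dependent cutoff $\chi_h(s,u)=\chi(h^{-1/3+\eta}s,\,h^{-2/3+\eta}u)$ (see~\eqref{eq.cut}) and works with $\phi_2^{\mathrm{cut}}=\chi_h\phi_2$, so that all Taylor remainders are controlled by the size of the support and the commutator with $\chi_h$ is $\mathscr{O}(h^\infty)$ by Agmon; this $\phi_2^{\mathrm{cut}}$ is then reused throughout Section~\ref{sec.spectralanalysis}. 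You instead avoid the $h$-dependent cutoff and control each Taylor/metric remainder directly by splitting off an exponential factor and absorbing it into the Agmon weight, e.g.\ $|s|^3|\phi_2|^2\leq\bigl(\sup_s|s|^3e^{-2c|s|/h^{2/3}}\bigr)e^{2|\Gamma|/h^{2/3}}|\phi_2|^2$. Both variants yield the same $\mathscr{O}(h^{4/3})$ error; the paper's cutoff is slightly cleaner bookkeeping and pays dividends later, while your route is marginally more direct for this proposition alone. One small inaccuracy: your pointwise claim $J_\theta'(u)^{-2}=e^{2i\alpha/3}+\mathscr{O}(u)$ is not literally true (the correction is supported where $\chi'\neq0$, not of size $u$), but since that region is at fixed positive distance from $u=0$ the Agmon estimate makes it exponentially small, so the argument still goes through.
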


\begin{proposition}[Refined localization of the spectrum]\label{prop.locspec} Consider $\alpha\in\left[0,\frac{3\pi}{5}\right)$  and $R>0$ with $R \not\in (2\N-1)\sqrt{\frac{\kappa_0}{2}}$. Then there exist $h_0>0$ and $N \in \N$ such that, for all $h\in(0,h_0)$,
	\begin{equation}\label{eq.locspec}
	\mathrm{sp}(\mathscr{M}_{h,\alpha})\cap D(z_1e^{\frac{2i\alpha}{3}}h^{\frac23},Rh)\subset\bigcup_{n=1}^{N} D(\mu_n(h,\alpha),h^{\frac32-2\eta})\,,
	\end{equation}
	with
\begin{equation} \label{eq.munhalpha} \mu_n(h,\alpha)=h^{\frac23}e^{2i\alpha/3}z_1+(2n-1)he^{i\alpha/2}\sqrt{\frac{\kappa_0}{2}}\,.
\end{equation}
	Moreover, for all $n\in\{1,\ldots, N\}$, the Riesz projector
	\[\Pi_{n,h}:=\frac{1}{2i\pi}\int_{\mathscr{C}_{n,h}}(z-\mathscr{M}_{h,\alpha})^{-1}\mathrm{d}z\,,\quad\mbox{ where } \mathscr{C}_{n,h}=\partial D(\mu_n(h,\alpha),h^{\frac32-2\eta})\,,\]
	is of rank at most one.
	
	\end{proposition}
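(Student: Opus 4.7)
My plan is to reduce the spectral analysis of $\mathscr{M}_{h,\alpha}$ near $A_0$ to that of the tensorized model $\mathscr{N}_{h,\alpha}$ from \eqref{eq.Nha0}, whose spectrum is explicitly $\{\mu_n(h,\alpha):n\geq 1\}$ (sum of the real Airy spectrum in $u$ rotated by $e^{2i\alpha/3}$ and the complex harmonic oscillator spectrum in $s$ with frequency $e^{i\alpha/2}\sqrt{\kappa_0/2}$). The two core ingredients to assemble are: a resolvent bound of the form
\begin{equation*}
\|(z-\mathscr{N}_{h,\alpha})^{-1}\|\leq C h^{-3/2+2\eta}
\quad\text{when }\mathrm{dist}(z,\mathrm{sp}(\mathscr{N}_{h,\alpha}))\geq h^{3/2-2\eta},
\end{equation*}
obtained by exploiting the tensorized structure and treating Airy and harmonic parts separately (Proposition \ref{prop.resolventmodel}); and a quasimode statement saying that eigenfunctions of $\mathscr{M}_{h,\alpha}$ in the big disc $D(z_1 e^{2i\alpha/3}h^{2/3},Rh)$ are $o(h)$-quasimodes for $\mathscr{N}_{h,\alpha}$ after the tubular dilated change of coordinates.

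\textbf{Spectral inclusion.} I would start from an eigenfunction $\varphi$ of $\mathscr{M}_{h,\alpha}$ with eigenvalue $\lambda$ in the big disc. Proposition \ref{prop.loch23} already localizes $\varphi$ near $A_0$ at scale $h^{2/3}$, so up to exponentially small errors $\varphi$ is supported in the tubular neighborhood $B_\delta$. In those coordinates, Taylor expanding the metric factor $m_\theta=1-t\kappa(s)$, the Jacobian $J_\theta'$, and $\Gamma_1(s,J_\theta(u))=ue^{-i\alpha/3}+\frac{\kappa_0}{2}s^2+\mathcal{O}(|s|^3+us^2)$, I would write
\begin{equation*}
\mathscr{M}_{h,\alpha}=\mathscr{N}_{h,\alpha}+\mathscr{R}_{h,\alpha},
\end{equation*}
with $\mathscr{R}_{h,\alpha}$ bounded pointwise by $\mathcal{O}(h^2+u+us^2+|s|^3)$ plus harmless boundary terms from $V_\theta, W_\theta$. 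Controlling $\|\mathscr{R}_{h,\alpha}\varphi\|=o(h)\|\varphi\|$ requires the sharper localization in $s$ at scale $h^{1/2}$ provided by Proposition \ref{prop.locs}, proved by testing the real part of $e^{-i\beta}\mathscr{M}_{h,\alpha}$ against an $s^2/h$-weighted $\varphi$ and exploiting the $s^2$-coercivity from Lemma \ref{lem.confinement}. Together, the resolvent bound and the quasimode estimate force $\mathrm{dist}(\lambda,\mathrm{sp}(\mathscr{N}_{h,\alpha}))\leq h^{3/2-2\eta}$; finiteness of $N$ then follows because the $\mu_n(h,\alpha)$ are spaced at order $h$ and $R\notin (2\N-1)\sqrt{\kappa_0/2}$ keeps $\partial D(z_1 e^{2i\alpha/3}h^{2/3},Rh)$ away from any small disc.

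\textbf{Rank of $\Pi_{n,h}$.} To prove $\mathrm{rank}\,\Pi_{n,h}\leq 1$ I would argue by contradiction: otherwise either there are two linearly independent eigenfunctions or there is a Jordan chain of length $\geq 2$, i.e.\ $\varphi^{(1)},\varphi^{(2)}$ with $(\mathscr{M}_{h,\alpha}-\lambda_n)\varphi^{(1)}=0$ and $(\mathscr{M}_{h,\alpha}-\lambda_n)\varphi^{(2)}=\varphi^{(1)}$. The central task is to transfer the Agmon and $s$-localization estimates to $\varphi^{(2)}$ despite the inhomogeneous source; this would be done through Caccioppoli-type bounds (Proposition \ref{prop.controlf}) that absorb $\varphi^{(1)}$ into the coercive form using its already-established decay. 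Once both $\varphi^{(j)}$ are quasimodes for $\mathscr{N}_{h,\alpha}$ (Propositions \ref{prop.quasimode} and \ref{prop.quasimode2}), I would test $\Pi_{n,h}$ against an explicit Airy--Hermite tensor trial function from Section \ref{sec.quaRiesz} to confirm $\Pi_{n,h}\neq 0$, and then pair the Jordan relation with the same trial function to derive a contradiction with the algebraic simplicity of the eigenvalue $\mu_n(h,\alpha)$ of $\mathscr{N}_{h,\alpha}$ (a normal-ish tensor product of diagonalizable operators).

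\textbf{Main obstacle.} The hardest point is the Jordan-block exclusion. Classical Agmon and commutator techniques exploit the homogeneous eigenvalue equation, whereas here the source $\varphi^{(1)}$ on the right-hand side of the equation for $\varphi^{(2)}$ must be absorbed without spoiling the $h^{1/2}$-scale localization in $s$. Running the localization scheme a second time while controlling cross-terms between $\Re(e^{-i\beta}\mathscr{M}_{h,\alpha})$, the imaginary corrections, and the inhomogeneity through Caccioppoli-type estimates is where the bulk of the technical work is expected to lie.
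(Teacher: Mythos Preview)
Your spectral-inclusion argument is essentially the paper's: localize via Proposition~\ref{prop.loch23}, sharpen in $s$ via Proposition~\ref{prop.locs}, show the cutoff eigenfunction is an $\mathscr{O}(h^{3/2-\eta})$-quasimode for $\mathscr{N}_{h,\alpha}$ (Proposition~\ref{prop.quasimode}), and feed this into the resolvent bound of Proposition~\ref{prop.resolventmodel}.

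Your rank-$\leq 1$ argument, however, conflates two different proofs. The Airy--Hermite trial function of Section~\ref{sec.quaRiesz} is used in the paper only for Proposition~\ref{prop.finale} (rank $\geq 1$); it plays no role in the present proposition. For rank $\leq 1$ the paper never tests $\Pi_{n,h}$ against a trial function. Instead it transports the (generalized) eigenfunctions of $\mathscr{M}_{h,\alpha}$ to the Riesz projector $\hat\Pi_{n,h}$ of the \emph{model} $\mathscr{N}_{h,\alpha}$, whose rank is exactly one. For a cutoff eigenfunction $\phi$ one writes
\[
\hat\Pi_{n,h}\phi=\phi+\frac{1}{2i\pi}\int_{\mathscr{C}_{n,h}}(z-\lambda)^{-1}(z-\mathscr{N}_{h,\alpha})^{-1}(\mathscr{N}_{h,\alpha}-\lambda)\phi\,\mathrm{d}z,
\]
so that the quasimode bound plus the resolvent estimate give $\hat\Pi_{n,h}\phi=\phi+o(\|\phi\|)$; two independent eigenfunctions would then force $\mathrm{rank}\,\hat\Pi_{n,h}\geq 2$, a contradiction. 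In the Jordan case the same scheme is used, but note that $\varphi^{(2)}$ is \emph{not} a quasimode in your sense: Proposition~\ref{prop.quasimode2} only controls $(\mathscr{N}_{h,\alpha}-\lambda)^2\phi_2^{\mathrm{cut}}=\mathscr{O}(h^{13/6})$, so one needs the second-order resolvent identity with $(z-\lambda)^{-2}$ under the contour integral (this is why the exponent $13/6>2$ matters). Your proposed step ``pair the Jordan relation with the trial function'' is not a mechanism that produces a contradiction; the actual obstruction is the rank mismatch with $\hat\Pi_{n,h}$.
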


	\begin{proposition}[Existence of the spectrum]\label{prop.finale}
Consider $\alpha\in\left[0,\frac{3\pi}{5}\right)$. There exists $h_0>0$ such that, for all $h\in(0,h_0)$, the rank of $\Pi_{n,h}$ is exactly one.
	\end{proposition}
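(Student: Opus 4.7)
Since Proposition~\ref{prop.locspec} already gives $\mathrm{rank}\,\Pi_{n,h}\leq 1$, the task reduces to showing $\Pi_{n,h}\neq 0$. The plan is to construct, for each $n\in\{1,\ldots,N\}$, a quasimode $\psi_n$ of $\mathscr{M}_{h,\alpha}$ at spectral parameter $\mu_n(h,\alpha)$, sharp enough that the Riesz formula forces $\Pi_{n,h}\psi_n$ to stay essentially equal to $\psi_n$ as $h\to 0$.

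Guided by the tensorised model $\mathscr{N}_{h,\alpha}$ of~\eqref{eq.Nha0}, I would take $\psi_n$ to be, in the tubular coordinates $(s,u)$, a smooth cutoff supported in $B_\delta$ of the product $\psi_n^{\mathrm{ai}}(u)\,\psi_n^{\mathrm{he}}(s)$, where $\psi_n^{\mathrm{ai}}(u)=c_{\mathrm{ai}}\,\mathsf{Ai}(uh^{-2/3}-z_1)$ is the Dirichlet ground state of the real Airy operator $h^2 D_u^2+u$ on $(0,\infty)$ at eigenvalue $z_1 h^{2/3}$, and $\psi_n^{\mathrm{he}}(s)$ is the $(n-1)$-th eigenfunction of the complex harmonic oscillator $h^2 D_s^2+e^{i\alpha}\frac{\kappa_0}{2}s^2$ at eigenvalue $(2n-1)h\,e^{i\alpha/2}\sqrt{\kappa_0/2}$. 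The cutoff guarantees $\psi_n\in\mathscr{U}_0(H^1_0(\Omega))$ via the identification of Section~\ref{sec.2}. Using the Taylor expansion $\Gamma_1(s,J_\theta(u))=ue^{-i\alpha/3}+\frac{\kappa_0}{2}s^2+\mathscr{O}(|s|^3+us^2)$ at $\theta=-i\alpha/3$, together with $m_\theta^{-2}=1+\mathscr{O}(\delta)$ and the analogous expansion of $(J_\theta')^{-2}$, and exploiting the Airy and Gaussian decay of $\psi_n$ on the semiclassical scales $h^{2/3}$ and $\sqrt h$, one should obtain
\[
\|(\mathscr{M}_{h,\alpha}-\mu_n(h,\alpha))\psi_n\|_{E_0}\leq C_n\,h^{3/2-\eta/2}\|\psi_n\|_{E_0}\,.
\]

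The second ingredient is the resolvent estimate on $\mathscr{C}_{n,h}$ already implicit in the proof of Proposition~\ref{prop.locspec}, namely $\|(z-\mathscr{M}_{h,\alpha})^{-1}\|\leq C h^{-3/2+2\eta}$ for $z\in\mathscr{C}_{n,h}$. Writing
\[
\Pi_{n,h}\psi_n=\psi_n+\frac{1}{2i\pi}\int_{\mathscr{C}_{n,h}}\frac{(z-\mathscr{M}_{h,\alpha})^{-1}(\mathscr{M}_{h,\alpha}-\mu_n(h,\alpha))\psi_n}{z-\mu_n(h,\alpha)}\,\mathrm{d}z\,,
\]
and using $\int_{\mathscr{C}_{n,h}}|z-\mu_n(h,\alpha)|^{-1}\,|\mathrm{d}z|=2\pi$, I obtain
\[
\|\Pi_{n,h}\psi_n-\psi_n\|_{E_0}\leq C\,h^{-3/2+2\eta}\,h^{3/2-\eta/2}\|\psi_n\|_{E_0}=C\,h^{3\eta/2}\|\psi_n\|_{E_0}\,,
\]
which tends to $0$ as $h\to 0$. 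Hence $\Pi_{n,h}\psi_n\neq 0$ for small $h$, so $\mathrm{rank}\,\Pi_{n,h}\geq 1$; combined with Proposition~\ref{prop.locspec} this gives equality.

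The main obstacle will be the bookkeeping of $h$-powers: the quasimode residual norm, the resolvent bound and the disc radius are all polynomially tight, so $\eta$ must be kept small and strictly consistent between the quasimode construction, the Taylor remainders of $\Gamma_1$, $m_\theta$ and $(J_\theta')^{-2}$ (and the lower-order potentials $V_\theta$, $W_\theta$ of~\eqref{eq.expLhat}), and the resolvent estimate imported from the proof of Proposition~\ref{prop.locspec}. A secondary delicate point is that the complex dilation $\theta=-i\alpha/3$ is active only inside $T_\delta$, so the truncation of $\psi_n^{\mathrm{ai}}$ to $u\leq\delta$ and of $\psi_n^{\mathrm{he}}$ to $|s|\leq s_0$ must be shown to produce only exponentially small errors; this follows from the standard asymptotics of $\mathsf{Ai}$ at $+\infty$ and from the Gaussian decay of $\psi_n^{\mathrm{he}}$ on scale $\sqrt h$, valid since $\cos(\alpha/2)>0$ for $\alpha\in[0,\frac{3\pi}{5})$.
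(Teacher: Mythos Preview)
Your overall architecture --- build a quasimode from the Airy--Hermite product, insert it into the Riesz formula, and show $\Pi_{n,h}\psi_n=\psi_n+o(1)$ --- is exactly the paper's strategy, and your quasimode estimate $\|(\mathscr{M}_{h,\alpha}-\mu_n)\psi_n\|=\mathscr{O}(h^{3/2})\|\psi_n\|$ is correct (the dominant error is the $s^3$ term in the Taylor expansion of $\Gamma_1$). The gap is in the second ingredient: you assert that a resolvent bound
\[
\|(z-\mathscr{M}_{h,\alpha})^{-1}\|\leq Ch^{-3/2+2\eta}\qquad(z\in\mathscr{C}_{n,h})
\]
is ``already implicit in the proof of Proposition~\ref{prop.locspec}''. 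It is not. That proof only shows that \emph{eigenvalues} of $\mathscr{M}_{h,\alpha}$ in the disc lie $\mathscr{O}(h^{3/2-\eta})$-close to the $\mu_n$; for a non-selfadjoint operator this gives no control whatsoever on the resolvent norm away from the spectrum (the pseudospectrum may be much larger). All genuine resolvent bounds in the paper are for the \emph{model} $\mathscr{N}_{h,\alpha}$ (Proposition~\ref{prop.resolventmodel}), which lives on $L^2(\mathbb{R}\times\mathbb{R}_+)$, not on $E_0$. Transferring such a bound to $\mathscr{M}_{h,\alpha}$ would require a parametrix/Grushin argument that is nowhere carried out.

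The paper avoids this obstacle altogether. It moves to a larger circle $\widetilde{\mathscr{C}}_{n,h}$ of radius $\epsilon h$ (still enclosing only the $n$-th eigenvalue by~\eqref{eq.locspec}), sets $\varphi_{h,z}=(z-\mathscr{M}_{h,\alpha})^{-1}\psi_h$, and --- using that the \emph{specific} right-hand side $\psi_h$ is localized near $A_0$ --- runs Agmon and elliptic estimates (Lemmas of the type~\ref{lem.fina}--\ref{lem.finb}) to show
\[
(\mathscr{N}_{h,\alpha}-z)\phi_{h,z,2}^{\mathrm{cut}}=-\psi_{h,2}^{\mathrm{cut}}+R_{h,z},\qquad \|R_{h,z}\|=o(h)\|\phi_{h,z,2}^{\mathrm{cut}}\|+o(1)\|\psi_h\|.
\]
The known resolvent bound for $\mathscr{N}_{h,\alpha}$ on $\widetilde{\mathscr{C}}_{n,h}$ ($\sim h^{-1}$) then gives $\|\varphi_{h,z}\|\leq Ch^{-1}\|\psi_h\|$, and the contour integral collapses to $\psi_h+o(1)$. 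In short, the paper never bounds $(z-\mathscr{M}_{h,\alpha})^{-1}$ as an operator; it bounds it only on the quasimode, and the work needed to do so is precisely the content of Section~\ref{sec.quaRiesz}. Your sketch would become complete if you replaced the claimed operator-norm bound by this localized argument.
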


Proposition \ref{prop.lowerbound} is proved in Section \ref{prop.lowerbound}. Propositions \ref{prop.locspec} and \ref{prop.finale} are proved in Section~\ref{sec.4.3}. Proposition \ref{prop.locspec} is a consequence of the analysis in Sections \ref{sec.4.2} and \ref{sec.4.1} where, by inserting an appropriate quasimode in the Riesz projector, we can prove that $\Pi_{n,h}$ is not zero.

\subsection{Rough localization of the spectrum}\label{sec.roughloc}
 In this section, we prove Proposition \ref{prop.lowerbound}.

 Let us first note that, in a first naive approach, when $\alpha \in [0,\frac{\pi}{2})$,
\[\Re\langle\mathscr{L}_{h,\alpha}\psi,\psi\rangle\geq 0\,,\quad \Im\langle\mathscr{L}_{h,\alpha}\psi,\psi\rangle=\sin\alpha\int_ {\Omega}x_1|\psi|^2\mathrm{d}x\,.\]
This gives the estimate on the imaginary part of the spectrum, and it remains to refine the estimate on the real part. For this, we consider \Bk a smooth cutoff function $\chi_h$ in the form
\[\chi_h(s,u)=\chi(h^{-\frac13+\eta}s,h^{-\frac23+\eta}u)\,,\]	
 where $\eta>0$ and we let
\begin{equation}\label{eq.cut}
	\phi^{\mathrm{cut}}_2=\chi_h\phi_2\,.
\end{equation}
Consider now an eigenfunction $\varphi$ of $\mathscr{M}_{h,\alpha}$ associated with an eigenvalue $\lambda$ such that $\Re
\lambda\leq Mh^{\frac23}$. From \eqref{eq.expLhat}, we get
\[\begin{split}
	\Re\lambda\|\varphi\|^2 & =\Re L_{h,\alpha,\theta}(\varphi,\varphi)\\
	\Bl & \geq \int_{B_\delta} \Big(\Re(m_\theta^{-2})|h\partial_s\phi_2|^2+\Re([J'_\theta]^{-2})|h\partial_u\phi_2|^2 \\
& \qquad \qquad \Bk +\Re \left(e^{i\alpha}\Gamma_1(s,J_\theta(u))|\phi_2|^2\right)\Big)\mathrm{d}s\mathrm{d}u
	-Ch^2\|\phi_2\|^2\,,
\end{split}\]
where we used the trace theorem and Proposition \ref{prop.loch23} to control the boundary term. Then, with a Taylor expansion  in the expressions \eqref{eq.mtheta} of $m_\theta$ and \eqref{eq.jprimetheta} of $J'_{\theta}$ and with Proposition \ref{prop.loch23}, we  get
\[\begin{split}
\Re\lambda\|\varphi\|^2& =\Re L_{h,\alpha,\theta}(\varphi,\varphi)\\
&\geq \int_{B_\delta} |h\partial_s\phi^{\mathrm{cut}}_2|^2+\cos\left(\frac{2\alpha}{3}\right)
\left(|h\partial_u\phi^{\mathrm{cut}}_2|^2+u|\phi_2^{\mathrm{cut}}|^2\right)
\mathrm{d}s\mathrm{d}u -Ch^{\frac43}\|\phi_2\|^2\,.
\end{split}\]
From the min-max theorem applied to the real Airy operator, we deduce that
\[\Re \lambda\|\varphi\|^2\geq z_1h^{\frac23}\cos\left(\frac{2\alpha}{3}\right)\|\phi_2^{\mathrm{cut}}\|^2-Ch^{\frac43}\|\phi_2\|^2\,.\]
By using again Proposition \ref{prop.loch23}, we infer that
\[\Re \lambda\geq z_1h^{\frac23}\cos\left(\frac{2\alpha}{3}\right)-Ch^{\frac43}\,.\]
This proves Proposition \ref{prop.lowerbound}.

\begin{remark}  \label{rem.interm}
Let us mention what happens in the case $\alpha \in (\frac{\pi}{2}, \frac{3\pi}{5})$. In that case we do not even \emph{a priori} have $\Re\langle\mathscr{M}_{h,\alpha}\psi,\psi\rangle\geq 0$
because of the electric potential. Choosing $\beta$ such that $(\alpha,\beta) \in \mathcal{T}$ where $\mathcal{T}$ is defined in Lemma \ref{lem.defT},
we can still use the localization properties proved in Section
\ref{sec.loestimates}. In particular, working with $e^{-i\beta} \mathscr{M}_{h,\alpha}$ instead of
$\mathscr{M}_{h,\alpha}$ and using Proposition \ref{prop.loch23}, as well as Taylor expansions of $m_\theta$ and of $J'_{\theta}$, we get that, for any eigenvalue $\lambda$ of $\mathcal{L}_{h,\alpha}$, we have
\[
\Re e^{-i\beta}\lambda\geq z_1h^{\frac23}\cos\left(\frac{2\alpha}{3}-\beta\right)-Ch^{\frac43}\,.
\]
\end{remark}

\subsection{Resolvent estimates}\label{sec.4.2}
To prepare the proof of Proposition \ref{prop.locspec}, let us describe the spectrum and resolvent of our model operator
\begin{equation}\label{eq.Nhalpha}
\mathscr{N}_{h,\alpha}=e^{\frac{2i\alpha}{3}}(h^2D_u^2+u)+h^2D_s^2+e^{i\alpha}\frac{ \kappa_0 s^2}{2}\,,
\end{equation}
where $D = -i\partial$.
\begin{proposition}\label{prop.resolventmodel}
	Let $R>0$ with $R \not\in (2\N-1)\sqrt{\frac{\kappa_0}{2}}$. There exist $C, h_0>0$ and $N \in\mathbb{N}$ such that the following holds. The spectrum of $\mathscr{N}_{h,\alpha}$ in $D(z_1e^{\frac{2i\alpha}{3}}h^{\frac23},Rh)$ is made of $N$ eigenvalues of algebraic multiplicity one, which are the $(\mu_n(h,\alpha))_{1\leq n\leq N}$ as given in \eqref{eq.munhalpha}.  Moreover, for all $z\in D(z_1e^{\frac{2i\alpha}{3}}h^{\frac23},Rh)$ such that $z\notin\{\mu_n(h,\alpha)\,, n\in\{1,\ldots, N \}\}$, we have
	\[\|(z-\mathscr{N}_{h,\alpha})^{-1}\|\leq Ch^{-\frac23}+\frac{C}{\mathrm{dist}(z,\mathrm{sp}(\mathscr{N}_{h,\alpha}))}\,.\]
\end{proposition}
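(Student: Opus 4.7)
\emph{Proof plan.} The plan is to exploit the tensor-product structure of $\mathscr{N}_{h,\alpha}$. Writing $A_h := h^2 D_u^2 + u$ for the Dirichlet Airy operator on $L^2(\R_+)$ and $B_h := h^2 D_s^2 + e^{i\alpha}\tfrac{\kappa_0}{2}s^2$ for the complex harmonic oscillator on $L^2(\R)$, one has
\[\mathscr{N}_{h,\alpha} = e^{2i\alpha/3} A_h\otimes I + I\otimes B_h.\]
The operator $A_h$ is selfadjoint with simple eigenvalues $\{h^{2/3}z_k\}_{k\geq 1}$ and orthonormal eigenfunctions $(a_k^h)_{k\geq 1}$. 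A rescaling $s = h^{1/2}\sigma$ identifies $B_h$ with $h\widehat B$ where $\widehat B := -\partial_\sigma^2 + e^{i\alpha}\tfrac{\kappa_0}{2}\sigma^2$ is a classical Davies-type complex harmonic oscillator; its algebraically simple eigenvalues $(2n-1)e^{i\alpha/2}\sqrt{\kappa_0/2}$ and rotated Hermite eigenfunctions are well known. Hence $\mathrm{sp}(\mathscr{N}_{h,\alpha}) = \{e^{2i\alpha/3}h^{2/3}z_k+(2n-1)h e^{i\alpha/2}\sqrt{\kappa_0/2}\}_{k,n\geq 1}$, each eigenvalue being algebraically simple. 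The hypothesis $R\notin(2\N-1)\sqrt{\kappa_0/2}$ together with the scale separation $h^{2/3}(z_2-z_1)\gg h$ then isolates exactly $\mu_1(h,\alpha),\ldots,\mu_N(h,\alpha)$ inside $D(z_1 e^{2i\alpha/3}h^{2/3},Rh)$ for some fixed $N$ and all small $h$.

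The spectral theorem for $A_h$ block-diagonalises the resolvent: setting $w_k(z):=z-e^{2i\alpha/3}h^{2/3}z_k$,
\[\|(z-\mathscr{N}_{h,\alpha})^{-1}\| = \sup_{k\geq 1}\|(w_k(z)-B_h)^{-1}\|,\]
and the task reduces to two resolvent estimates for $B_h$. For $k=1$, $w_1(z)\in D(0,Rh)$, and the rescaling $B_h=h\widehat B$ gives $\|(w_1-B_h)^{-1}\|=h^{-1}\|(w_1/h-\widehat B)^{-1}\|$ with $w_1/h\in D(0,R)$. On this fixed compact disk $\widehat B$ has only finitely many algebraically simple poles (the $(N{+}1)$-th eigenvalue of $\widehat B$ being strictly outside $\overline{D(0,R)}$ by the threshold hypothesis), so a Laurent/Weierstrass decomposition of the meromorphic resolvent around each pole, together with the uniform boundedness of the analytic remainder, yields $\|(\widehat w-\widehat B)^{-1}\|\leq C/\mathrm{dist}(\widehat w,\mathrm{sp}(\widehat B))$ on $D(0,R)\setminus\mathrm{sp}(\widehat B)$. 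Scaling back,
\[\|(w_1(z)-B_h)^{-1}\|\leq C/\mathrm{dist}(z,\{\mu_n(h,\alpha)\}_n).\]

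For $k\geq 2$ the key input is sectoriality of $e^{-i\alpha/2}B_h$: since $\alpha\in[0,\pi)$,
\[\Re\langle e^{-i\alpha/2}B_h\psi,\psi\rangle=\cos(\alpha/2)\bigl(\|h\partial_s\psi\|^2+\tfrac{\kappa_0}{2}\|s\psi\|^2\bigr)\geq 0,\]
so $\|(B_h-w)^{-1}\|\leq 1/|\Re(e^{-i\alpha/2}w)|$ whenever $\Re(e^{-i\alpha/2}w)<0$. A direct computation gives, for $z\in D(z_1 e^{2i\alpha/3}h^{2/3},Rh)$,
\[\Re(e^{-i\alpha/2}w_k(z))=(z_1-z_k)h^{2/3}\cos(\alpha/6)+O(h)\leq -ch^{2/3},\qquad k\geq 2,\]
using $\cos(\alpha/6)\geq\cos(\pi/10)>0$ and $z_k\geq z_2>z_1$. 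Hence $\|(w_k-B_h)^{-1}\|\leq Ch^{-2/3}$ uniformly in $k\geq 2$. Combining the two cases, and noting that when the point of $\mathrm{sp}(\mathscr{N}_{h,\alpha})$ closest to $z$ comes from $k\geq 2$ the distance is automatically $\geq ch^{2/3}$, so that the $1/\mathrm{dist}$ part is absorbed in $Ch^{-2/3}$, yields the claim.

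The main delicate point is the uniform $C/\mathrm{dist}$ bound on the resolvent of $\widehat B$ in the compact disk $D(0,R)$: the complex harmonic oscillator has globally exploding pseudospectrum, but on a fixed bounded region this reduces to finitely many algebraically simple poles plus a uniformly bounded analytic part, making the bound elementary. Note that the constraint $\alpha<\pi$, much weaker than the paper's $\alpha<3\pi/5$, is all that is needed for the sectoriality step in the $k\geq 2$ case; the tighter bound inherited in the statement comes from the geometric/localisation sections and not from the analysis of the model operator itself.
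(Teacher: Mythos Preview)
Your proof is correct and follows essentially the same approach as the paper: both separate the Airy groundstate from the excited levels (you via the full spectral decomposition $\sup_{k\geq 1}\|(w_k-B_h)^{-1}\|$, the paper via the projection $\mathfrak{P}_h$ onto the Airy groundstate versus $\mathrm{Id}-\mathfrak{P}_h$), then use the same sectoriality estimate $\Re\langle e^{-i\alpha/2}(\cdot)\psi,\psi\rangle$ with the key factor $\cos(\alpha/6)(z_k-z_1)h^{2/3}$ to handle $k\geq 2$, and reduce $k=1$ to the complex harmonic oscillator resolvent. Your Laurent/Weierstrass justification of the $C/\mathrm{dist}$ bound for $\widehat B$ on the fixed disk $D(0,R)$ is a welcome explicit substitute for what the paper simply asserts, and your remark that only $\alpha<\pi$ (for $\cos(\alpha/2)>0$) is needed here is accurate.
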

\begin{proof}
	Note that $\mathscr{N}_{h,\alpha}$ is the sum of two decoupled operators, an Airy operator and a harmonic oscillator. It has compact resolvent and its spectrum and eigenfunctions are completely known. Consider
	\begin{equation}\label{eq.Psimn}
	\Psi_{m,n,h}(s,u)=h^{-\frac13-\frac14}\mathsf{Ai}(h^{-\frac23}u-z_m)
f_n(h^{-\frac12}e^{i\alpha/2}\sqrt{\frac{\kappa_0}{2}}s)\,,
	\end{equation}
	where $\mathsf{Ai}$ is the usual Airy function (and $z_m$ its $m$-th positive zero) and $f_n$ the $n$-th normalized Hermite function. We have
	\[\mathscr{N}_{h,\alpha}\Psi_{m,n,h}=\left(z_mh^{\frac23}e^{\frac{2i\alpha}{3}}
+(2n-1)h\sqrt{\frac{\kappa_0}{2}}\right)\Psi_{m,n,h}\,.\]
	Moreover, there are no other eigenvalues and they are all of multiplicity one. Indeed, by analytic dilation, we see that $\mathscr{N}_{h,\alpha}$ is isospectral to the normal operator
	\[e^{\frac{2i\alpha}{3}}(h^2D_u^2+u)+e^{\frac{i\alpha}{2}}\left(h^2D_s^2+\frac{k_0 s^2}{2}\right)\,.\]
	Let us now turn to the estimate of the resolvent in the disk $D(z_1e^{\frac{2i\alpha}{3}}h^{\frac23},Rh)$. Consider
	\[z=z_1e^{\frac{2i\alpha}{3}}h^{\frac23}+\zeta h\,,\quad \zeta\in D(0,R)\,,\]
	with $\zeta$ avoiding the numbers $e^{\frac{i\alpha}{2}}(2n-1)\sqrt{\frac{k_0}{2}}$.
	We have
	\[\mathscr{N}_{h,\alpha}-z=e^{\frac{2i\alpha}{3}}(h^2D_u^2+u-z_1 h^{\frac23})+h^2D_s^2+e^{i\alpha}\frac{k_0 s^2}{2}-\zeta h\,,\]
	and also
	\[e^{-\frac{i\alpha}{2}}(\mathscr{N}_{h,\alpha}-z)=e^{\frac{i\alpha}{6}}(h^2D_u^2+u-z_1 h^{\frac23})+e^{-\frac{i\alpha}{2}}\left(h^2D_s^2+e^{i\alpha}\frac{k_0 s^2}{2}-\zeta h\right)\,.\]
	Let us denote $g_h$ the  (explicit) positive normalized groundstate of $h^2D_u^2+u$ and consider the orthogonal projection
	\[\mathfrak{P}_h\psi(s,u)=\langle g_h,\psi(s,\cdot)\rangle_{L^2(\mathbb{R}_+)} g_h(u)\,.\]
	We have
	\begin{equation} \label{eq:borninforth}\begin{split}
	\Re\langle e^{-\frac{i\alpha}{2}}(\mathscr{N}_{h,\alpha}-z)(\mathrm{Id}-\mathfrak{P}_h)\psi,\psi\rangle&\geq \left(h^{\frac23}\cos(\alpha/6)(z_2-z_1)-Rh\right)\|(\mathrm{Id}-\mathfrak{P}_h)\psi\|^2\\
  &\geq ch^{\frac23}\|(\mathrm{Id}-\mathfrak{P}_h)\psi\|^2\,,
	\end{split}
\end{equation}
 which implies that the restriction of $\mathscr{N}_{h,\alpha}-z$ to $\textrm{Ker}(\mathfrak{P}_h)$ is injective and therefore bijective since $\mathscr{N}_{h,\alpha}$ is Fredholm of index $0$.
Moreover, we have the orthogonal decomposition
\[e^{-\frac{i\alpha}{2}}(\mathscr{N}_{h,\alpha}-z)
=e^{-\frac{i\alpha}{2}}(\mathscr{N}_{h,\alpha}-z)(\mathrm{Id}-\mathfrak{P}_h)
+e^{-\frac{i\alpha}{2}}\left(h^2D_s^2+e^{i\alpha}\frac{ \kappa_0  s^2}{2}-\zeta h\right)\mathfrak{P}_h\,.\]
 From \eqref{eq:borninforth}, the fact that $\zeta$ avoids the numbers $e^{\frac{i\alpha}{2}}(2n-1)\sqrt{\frac{k_0}{2}}$, we get that $e^{-\frac{i\alpha}{2}}(\mathscr{N}_{h,\alpha}-z)$ is bijective and that its inverse is given by
\begin{equation*}
\begin{split}
\big(e^{-\frac{i\alpha}{2}}(\mathscr{N}_{h,\alpha}-z)\big)^{-1}
= & e^{\frac{i\alpha}{2}}\big((\mathscr{N}_{h,\alpha}-z)\big|_{\textrm{Ker}(\mathfrak{P}_h)}\big)^{-1}
(\mathrm{Id}-\mathfrak{P}_h) \\
& +e^{\frac{i\alpha}{2}}\big(h^2D_s^2+e^{i\alpha}\frac{k_0 s^2}{2}-\zeta h\big)^{-1}\mathfrak{P}_h\,.
\end{split}
\end{equation*}
 Let us notice that there exists $C>0$ such that for all $\zeta\in D(0,R)$ avoiding the numbers $e^{\frac{i\alpha}{2}}(2n-1)\sqrt{\frac{k_0}{2}}$, we have  \Bk
\[\left\|\left(h^2D_s^2+e^{i\alpha}\frac{k_0 s^2}{2}-\zeta h\right)^{-1}\right\|\leq \frac{C}{\mathrm{dist}(\mathrm{sp}(h^2D_s^2+e^{i\alpha}\frac{k_0 s^2}{2}),\zeta h)}\,.\]
Thus, from the above orthogonal decomposition, we deduce that
\[\|[e^{-\frac{i\alpha}{2}}(\mathscr{N}_{h,\alpha}-z)]^{-1}\|\leq (ch^{\frac23})^{-1}+\frac{C}{\mathrm{dist}(\mathrm{sp}(h^2D_s^2+e^{i\alpha}\frac{k_0 s^2}{2}),\zeta h)}\,.\]
This concludes the proof.
\end{proof}

\subsection{Quasimodes and localization estimates}\label{sec.4.1}
	If  $\varphi =(\varphi_1,\phi_2)$ is a normalized eigenfunction of $\mathscr{M}_{h,\alpha}$ associated with $\lambda\in D(z_1e^{\frac{2i\alpha}{3}}h^{\frac23},Rh)$, we have in particular
\begin{equation}
	\mathcal{M}_{h,\alpha}\phi_2=\lambda\phi_2\,,
\end{equation}
with $\mathcal{M}_{h,\alpha}$ denoting the formal operator
\begin{equation} \label{eq.formalM}
\mathcal{M}_{h,\alpha}=hD_s(m_\theta^{-2})hD_s
+hD_u(J'_\theta)^{-2}hD_u+e^{i\alpha}\Gamma_1(s,J_\theta(u))\,.
\end{equation}

This section is devoted to the proof of the following proposition. We recall that the model operator $\mathscr{N}_{h,\alpha}$ is defined in \eqref{eq.Nhalpha} and that $\phi^{\mathrm{cut}}_2$ is defined as a truncation of $\phi_2$ in \eqref{eq.cut}.
\begin{proposition}\label{prop.quasimode}
	We have
	\[(\mathscr{N}_{h,\alpha}-\lambda)\phi_2^{\mathrm{cut}}
=\mathscr{O}(h^{\frac32-3\eta})\|\phi^{\mathrm{cut}}_2\|\,.\]	
\end{proposition}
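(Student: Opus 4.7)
The plan is to use the eigenequation $\mathcal{M}_{h,\alpha}\phi_2=\lambda\phi_2$ and to show that, on the small support of $\chi_h$, the full operator $\mathcal{M}_{h,\alpha}$ coincides with its flat model $\mathscr{N}_{h,\alpha}$ up to controlled lower-order terms. Writing
\[
(\mathscr{N}_{h,\alpha}-\lambda)\phi_2^{\mathrm{cut}} = \chi_h\bigl(\mathscr{N}_{h,\alpha}-\mathcal{M}_{h,\alpha}\bigr)\phi_2 + \bigl[\mathscr{N}_{h,\alpha},\chi_h\bigr]\phi_2,
\]
one is reduced to bounding each summand by $O(h^{3/2-3\eta})\|\phi_2^{\mathrm{cut}}\|$.

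For the commutator, only the kinetic part of $\mathscr{N}_{h,\alpha}$ contributes (potential terms are multiplication operators), and the resulting expression is supported on the corona where either $|s|\sim h^{1/3-\eta}$ or $u\sim h^{2/3-\eta}$. On that corona one has $|\Gamma(s,u)|\gtrsim h^{2/3-\eta}$, so the weighted estimates of Proposition \ref{prop.loch23} force both $\phi_2$ and $h\nabla\phi_2$ to be smaller than $e^{-ch^{-\eta}}\|\phi_2\|$. The commutator contribution is therefore $O(h^\infty)\|\phi_2\|$, which is much better than required.

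For the leading term I would Taylor-expand the coefficients of $\mathcal{M}_{h,\alpha}$ near $u=0$. Since $\chi\equiv 1$ and $\chi'\equiv 0$ near $0$, for $h$ small the whole support of $\chi_h$ lies in the region where $J_\theta(u)=ue^{-i\alpha/3}$ and $(J_\theta')^{-2}=e^{2i\alpha/3}$ exactly, so the $u$-kinetic terms of $\mathscr{N}_{h,\alpha}$ and $\mathcal{M}_{h,\alpha}$ agree identically. The remaining discrepancies come from
\[
m_\theta^{-2}-1 = 2ue^{-i\alpha/3}\kappa(s)+O(u^2),
\]
contributing a kinetic error of the schematic form $hD_s\cdot O(u)\cdot hD_s$, and from the Taylor expansion of $\Gamma_1$ given by Assumption \ref{geoassumption0},
\[
e^{i\alpha}\Gamma_1(s,ue^{-i\alpha/3})-e^{2i\alpha/3}u-e^{i\alpha}\tfrac{\kappa_0}{2}s^2 = O\bigl(u|s|^2+|s|^3\bigr),
\]
contributing a multiplicative error.

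The estimation is straightforward for most pieces. Using the pointwise bounds $u\lesssim h^{2/3-\eta}$ and $|s|\lesssim h^{1/3-\eta}$ on $\mathrm{supp}\,\chi_h$, together with the gradient and second-derivative bounds $\|hD_s\phi_2\|\lesssim h^{1/2}$ and $\|h^2D_s^2\phi_2\|\lesssim h$ (obtained by distributing the eigenequation and invoking the form estimates used in the proof of Proposition \ref{prop.lowerbound}), the kinetic remainder $\chi_h\,hD_s(u\kappa(s))hD_s\phi_2$ and the cross term $\chi_h u|s|^2\phi_2$ are each of order $h^{5/3-\eta}$, comfortably below the target. The main obstacle is the cubic remainder $\chi_h|s|^3\phi_2$: the pointwise cutoff gives only $h^{1-3\eta}$, which is insufficient. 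This is precisely where the refined $s$-localization announced in the outline (Proposition \ref{prop.locs}) enters, upgrading the pointwise cutoff to an $L^2$-Gaussian-type concentration on the natural harmonic-oscillator scale $h^{1/2}$ and giving $\|s^3\phi_2\|\lesssim h^{3/2-3\eta}\|\phi_2\|$. Collecting all contributions then yields the claimed $O(h^{3/2-3\eta})$ bound, and a final comparison $\|\phi_2\|\sim\|\phi_2^{\mathrm{cut}}\|$ (again via Proposition \ref{prop.loch23}) converts the estimate into the stated form.
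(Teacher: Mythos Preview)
Your overall architecture matches the paper's: write $(\mathscr{N}_{h,\alpha}-\lambda)\phi_2^{\mathrm{cut}}$ as a commutator piece (killed by the Agmon estimates of Proposition~\ref{prop.loch23}) plus the coefficient discrepancy $\mathcal{M}_{h,\alpha}-\mathscr{N}_{h,\alpha}$ applied to $\phi_2^{\mathrm{cut}}$, and control the latter term by term using support bounds in $u$ together with the refined $s$-localization of Proposition~\ref{prop.locs}. The identification of the $s^3$ potential remainder as the term forcing the refined localization is also correct.

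There is, however, one concrete gap. You assert the derivative bounds $\|hD_s\phi_2\|\lesssim h^{1/2}$ and $\|(hD_s)^2\phi_2\|\lesssim h$ and say they are ``obtained by distributing the eigenequation and invoking the form estimates used in the proof of Proposition~\ref{prop.lowerbound}''. They are not. The elementary coercivity of $\Re(e^{-i\beta}\mathcal{M}_{h,\alpha})$ yields only $\|hD_s\phi_2^{\mathrm{cut}}\|\lesssim h^{1/3}$ and $\|(hD_s)^2\phi_2^{\mathrm{cut}}\|\lesssim h^{2/3}$ (this is exactly Lemmas~\ref{lem.hnabla23} and~\ref{lem.hnabla243} in the paper). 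With those bounds your kinetic remainder $\chi_h\,u\,(hD_s)^2\phi_2$ is only $\mathscr{O}(h^{4/3-\eta})$, which does \emph{not} reach $h^{3/2-3\eta}$. The sharp estimates $\|(hD_s)^k\phi_2^{\mathrm{cut}}\|\lesssim h^{k/2}$ that you actually need are precisely the content of \eqref{eq.hDsopt} in Proposition~\ref{prop.locs}; their proof requires the additional mechanism of the nonnegative Airy quadratic form \eqref{eq.minoairy}, which decouples the $s$- and $u$-scales and is not available from the raw form inequality. In short, Proposition~\ref{prop.locs} must be invoked not only for the cubic term $s^3$ but for \emph{all} the remainder terms in $\mathcal{M}_{h,\alpha}-\mathscr{N}_{h,\alpha}$; once you do so, your argument coincides with the paper's and the claimed bound follows.
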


 This proposition essentially says that $\phi^{\mathrm{cut}}_2$ is a good quasimode for our model operator $\mathscr{N}_{h,\alpha}$. The proof will be done in several steps including elliptic estimates and a refined localization in $s$.

\subsubsection{Preliminary estimates}
 Recall that $\chi_h(s,u)=\chi(h^{-\frac13+\eta}s,h^{-\frac23+\eta}u)$ with $\chi$ a smooth cutoff function and $\eta$ fixed and small, and that $\phi^{\mathrm{cut}}_2=\chi_h\phi_2$.
	\begin{lemma}\label{lem.cut}
We have
\[\mathcal{M}_{h,\alpha}\phi^{\mathrm{cut}}_2=\lambda\phi^{\mathrm{cut}}_2+r_h\,,\]
with
\[r_h=	[\mathcal{M}_{h,\alpha},\chi_h]\phi_2\,,\]		
 where the commutator is given by
\begin{multline}\label{eq.comMchih}
 [\mathcal{M}_{h,\alpha},\chi_h]\phi =\big( hD_s(m^{-2}_\theta (hD_s\chi_h))\big) \phi +2m_\theta^{-2}(hD_s\chi_h)(hD_s \phi)\\
	+\big( hD_u((J'_\theta)^{-2} (hD_u\chi_h))\big) \phi +2(J'_\theta)^{-2}(hD_u\chi_h)(hD_u\phi)\,.
\end{multline}
\Bk
Moreover,
\[\|r_h\|=\mathscr{O}(h^\infty)\|\phi^{\mathrm{cut}}_2\|\,.\]
		\end{lemma}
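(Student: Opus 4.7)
The proof plan is to restrict the eigenvalue equation $\mathscr{M}_{h,\alpha}\varphi = \lambda \varphi$ to its second component in the interior of the tubular neighborhood. Since $\phi_2$ satisfies the Dirichlet condition at $u=0$ and is coupled to $\varphi_1$ only at $u=\delta$, the second component reads $\mathcal{M}_{h,\alpha}\phi_2 = \lambda \phi_2 + (\text{lower-order contributions from }V_\theta, W_\theta)$, the latter being harmless because $\chi_h$ vanishes near $u=\delta$ and $V_\theta$ contributes only at order $h^2$. The standard algebraic identity $\chi_h \mathcal{M}_{h,\alpha}\phi_2 = \mathcal{M}_{h,\alpha}(\chi_h \phi_2) - [\mathcal{M}_{h,\alpha},\chi_h]\phi_2$ then immediately yields $\mathcal{M}_{h,\alpha}\phi_2^{\mathrm{cut}} = \lambda \phi_2^{\mathrm{cut}} + r_h$ with $r_h = [\mathcal{M}_{h,\alpha},\chi_h]\phi_2$ (up to the negligible terms mentioned).

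Next, I would compute the commutator explicitly. The potential $e^{i\alpha}\Gamma_1(s,J_\theta(u))$ is a multiplication operator and commutes with $\chi_h$, so only the two divergence-form second-order pieces $hD_s m_\theta^{-2} hD_s$ and $hD_u (J'_\theta)^{-2} hD_u$ contribute. Applying the Leibniz rule (first to the outer $hD_s$, then to the inner $hD_s$, and similarly for $u$), the usual symmetric commutator formula produces the four terms displayed in \eqref{eq.comMchih}.

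The main estimate, $\|r_h\| = \mathscr{O}(h^\infty)\|\phi_2^{\mathrm{cut}}\|$, is the substantive point and relies on the Agmon localization in Proposition~\ref{prop.loch23}. The commutator $[\mathcal{M}_{h,\alpha},\chi_h]$ is supported inside $\mathrm{supp}(\nabla \chi_h)$, which is contained in the transition region of the cutoff; there one has either $|s| \sim h^{1/3-\eta}$ or $u \sim h^{2/3-\eta}$. Since $|\Gamma(s,u)| \geq c\max(|s|,u)$ near the origin, we get $|\Gamma|/h^{2/3} \geq c h^{-\eta}$ on that support. Hence the Agmon weight $e^{|\Gamma|/h^{2/3}}$ exceeds any polynomial in $h^{-1}$ there, and Proposition~\ref{prop.loch23} applied to both $\phi_2$ and $h\nabla_{s,u}\phi_2$ shows that their $L^2$ norms on $\mathrm{supp}(\nabla \chi_h)$ are $\mathscr{O}(h^\infty)\|\varphi\|$. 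Because the coefficients $h\nabla \chi_h$, $h^2 \Delta \chi_h$, $m_\theta^{-2}$, $(J'_\theta)^{-2}$ are all bounded on this set, one obtains $\|r_h\| = \mathscr{O}(h^\infty)\|\varphi\|$.

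Finally, to replace $\|\varphi\|$ by $\|\phi_2^{\mathrm{cut}}\|$ on the right-hand side, I would again invoke Proposition~\ref{prop.loch23}: the mass of $\varphi$ outside $\mathrm{supp}(\chi_h)$ is $\mathscr{O}(h^\infty)\|\varphi\|$, and $\varphi_1$ on $\Omega\setminus T_\delta$ is likewise exponentially small. Hence $\|\varphi\| = \|\phi_2^{\mathrm{cut}}\|(1 + \mathscr{O}(h^\infty))$, which upgrades the bound to $\|r_h\| = \mathscr{O}(h^\infty)\|\phi_2^{\mathrm{cut}}\|$. The main obstacle is purely bookkeeping: carefully tracking that every coefficient appearing in the commutator remains uniformly bounded on the transition region and that the $V_\theta$, $W_\theta$ remainders arising from truncating the eigenvalue equation to its second component are indeed absorbed into the $\mathscr{O}(h^\infty)$ error via the same Agmon estimate.
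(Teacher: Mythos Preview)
Your proposal is correct and follows essentially the same approach as the paper: compute the commutator by the Leibniz rule (only the divergence-form kinetic terms contribute), then invoke the Agmon estimates of Proposition~\ref{prop.loch23} together with the fact that $\nabla\chi_h$ is supported where $|\Gamma|/h^{2/3}\gtrsim h^{-\eta}$ to obtain the $\mathscr{O}(h^\infty)$ bound. You are in fact more careful than the paper, which simply asserts $\mathcal{M}_{h,\alpha}\phi_2=\lambda\phi_2$ without commenting on the $h^2V_\theta$ contribution and compresses the whole argument into two sentences; your explicit tracking of the $V_\theta$, $W_\theta$ terms and of the passage from $\|\varphi\|$ to $\|\phi_2^{\mathrm{cut}}\|$ fills in details the paper omits.
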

	\begin{proof}
The expression \eqref{eq.comMchih} follows from a straightforward computation. The estimate $\|r_h\|=\mathscr{O}(h^\infty)$ is a consequence of Proposition \ref{prop.loch23} and support considerations.
		\end{proof}
	
\begin{remark}
With a straightforward computation, we can check  that, for all $k,\ell\in\mathbb{N}$, $\|D_s^kD_u^\ell r_h\|=\mathscr{O}(h^\infty)$.	
\end{remark}
	
		\begin{lemma}\label{lem.hnabla23}
We have
\[\|hD_s\phi^{\mathrm{cut}}_2\|^2+\|hD_u\phi^{\mathrm{cut}}_2\|^2\leq  Ch^{\frac2
3} \|\phi^{\mathrm{cut}}_2\|^2\,.\]		
	\end{lemma}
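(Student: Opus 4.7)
The plan is to deduce this bound from the weighted gradient estimate already obtained in Proposition~\ref{prop.loch23}. Since $\lambda\in D(z_1 e^{2i\alpha/3}h^{\frac23},Rh)$ forces $|\lambda|\leq Ch^{\frac23}$, the hypothesis $\Re(e^{-i\beta}\lambda)\leq Mh^{\frac23}$ of Proposition~\ref{prop.loch23} is satisfied for a suitable $M$, giving in particular the unweighted consequence
\[
\int_{B_\delta}|h\nabla_{s,u}\phi_2|^2\,\mathrm{d}s\,\mathrm{d}u\leq Ch^{\frac23}\|\varphi\|_{E_0}^2.
\]
So the work reduces to comparing $h\nabla\phi_2^{\mathrm{cut}}$ with $\chi_h\, h\nabla\phi_2$, and comparing $\|\varphi\|$ with $\|\phi_2^{\mathrm{cut}}\|$.

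First, I would write $hD_\bullet\phi_2^{\mathrm{cut}}=\chi_h(hD_\bullet\phi_2)+(hD_\bullet\chi_h)\phi_2$ for $\bullet\in\{s,u\}$. The ``main'' piece $\chi_h(hD_\bullet\phi_2)$ is controlled directly by the gradient estimate above. The ``commutator'' piece $(hD_\bullet\chi_h)\phi_2$ is where the Agmon decay must be exploited: from the definition $\chi_h(s,u)=\chi(h^{-\frac13+\eta}s,h^{-\frac23+\eta}u)$ one has $\|hD_s\chi_h\|_\infty=\mathscr{O}(h^{\frac23+\eta})$ and $\|hD_u\chi_h\|_\infty=\mathscr{O}(h^{\frac13+\eta})$, and the supports of $hD_\bullet\chi_h$ sit in the region where $|s|\gtrsim h^{\frac13-\eta}$ or $u\gtrsim h^{\frac23-\eta}$, so that $|\Gamma(s,u)|\gtrsim h^{\frac23-\eta}$. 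Using the weighted $L^2$ part of Proposition~\ref{prop.loch23} on this support yields $\|\phi_2\|_{L^2(\mathrm{supp}\,\nabla\chi_h)}=\mathscr{O}(h^\infty)\|\varphi\|$, so the commutator piece is $\mathscr{O}(h^\infty)\|\varphi\|$.

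Next, to pass from the right-hand side $\|\varphi\|^2$ to the desired $\|\phi_2^{\mathrm{cut}}\|^2$, I would note that $\Omega\setminus T_\delta$ sits at distance $\geq\delta$ from $A_0=(0,0)$, so the weighted $L^2$ estimate of Proposition~\ref{prop.loch23} yields $\|\varphi_1\|^2=\mathscr{O}(h^\infty)\|\varphi\|^2$, and the same Agmon bound, applied to $(1-\chi_h)\phi_2$ whose support lies in $|\Gamma|\gtrsim h^{\frac23-\eta}$, yields $\|\phi_2-\phi_2^{\mathrm{cut}}\|^2=\mathscr{O}(h^\infty)\|\varphi\|^2$. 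A routine absorption then gives $\|\varphi\|^2\leq 2\|\phi_2^{\mathrm{cut}}\|^2$ for $h$ small, closing the estimate.

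The only delicate point, which I expect to be the main obstacle, is the careful accounting of the Agmon tail for the commutator piece: one must verify that $\nabla\chi_h$ is supported in the ``far'' region where the exponential weight $e^{2|\Gamma|/h^{\frac23}}$ has genuinely grown enough (in fact to $e^{2c h^{-\eta}}$) to absorb the $L^\infty$ losses of $\nabla\chi_h$ and the polynomial loss in $h$ coming from $\|\varphi\|$. Once this book-keeping is done, the proof is immediate.
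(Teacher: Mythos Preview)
Your argument is correct. You extract the unweighted gradient bound $\|h\nabla\phi_2\|^2_{B_\delta}\leq Ch^{\frac23}\|\varphi\|^2$ directly from the weighted estimate of Proposition~\ref{prop.loch23}, then split $hD_\bullet\phi_2^{\mathrm{cut}}=\chi_h(hD_\bullet\phi_2)+(hD_\bullet\chi_h)\phi_2$ and kill the commutator piece, as well as the discrepancy between $\|\varphi\|$ and $\|\phi_2^{\mathrm{cut}}\|$, via the exponential Agmon decay on the support of $\nabla\chi_h$ and of $1-\chi_h$. The ``delicate point'' you flag is exactly the content of Lemma~\ref{lem.cut} (and its proof from Proposition~\ref{prop.loch23}), so there is no gap.

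The paper argues differently: it tests the truncated equation $\mathcal{M}_{h,\alpha}\phi_2^{\mathrm{cut}}=\lambda\phi_2^{\mathrm{cut}}+r_h$ (Lemma~\ref{lem.cut}) against $\phi_2^{\mathrm{cut}}$, multiplies by $e^{-i\beta}$, takes the real part, and uses the coercivity of the form of $\mathcal{M}_{h,\alpha}$ (Lemmas~\ref{lem.lbe-iaJ} and~\ref{lem.confinement}) to obtain
\[
c\big(\|hD_s\phi_2^{\mathrm{cut}}\|^2+\|hD_u\phi_2^{\mathrm{cut}}\|^2\big)\leq \Re\big(e^{-i\beta}\langle\mathcal{M}_{h,\alpha}\phi_2^{\mathrm{cut}},\phi_2^{\mathrm{cut}}\rangle\big)\leq Ch^{\frac23}\|\phi_2^{\mathrm{cut}}\|^2.
\]
So the paper reproves a localized coercivity estimate on $\phi_2^{\mathrm{cut}}$, whereas you recycle the global one already established in Proposition~\ref{prop.loch23} and then localize \emph{a posteriori}. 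Your route is slightly more economical for this lemma; the paper's route has the advantage of setting up the ``test the equation, take $\Re e^{-i\beta}$'' template that is immediately iterated with $hD_s$ and $hD_u$ to prove Lemma~\ref{lem.hnabla243} and the refined bounds of Proposition~\ref{prop.locs}, for which no ready-made global estimate is available.
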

	\begin{proof}
 Due to the Dirichlet boundary condition of $\phi_2$ on the external part of $\partial B_\delta$ and the cutoff $\chi_h$, the function $\phi_2^{\mathrm{cut}}$ satisfies the Dirichlet condition on $\partial B_\delta$.

 Let us now choose
$\beta$ such that $(\alpha,\beta) \in \mathcal{T}$ as defined in Lemma \ref{lem.defT}.  With an integration by parts using the expression of $e^{-i\beta}\mathcal{M}_{h,\alpha}$, we get that,
for some $c>0$,
\[\Re\left(e^{-i\beta}\langle\mathcal{M}_{h,\alpha}\phi_2^{\mathrm{cut}},\phi_2^{\mathrm{cut}}\rangle\right)\geq c(\|hD_s \phi^{\mathrm{cut}}_2\|^2+\|hD_u \phi^{\mathrm{cut}}_2\|^2)\,.\]
Then, by Lemma \ref{lem.cut}, we have that
\[\left|\Re\left(e^{-i\beta}\langle\mathcal{M}_{h,\alpha}\phi_2^{\mathrm{cut}},\phi_2^{\mathrm{cut}}\rangle\right)\right|\leq Ch^{\frac23}\|\phi_2^{\mathrm{cut}}\|^2\,.\]
The conclusion follows.
	\end{proof}

 For further use, we check now that we even have a control of higher order derivatives.
\begin{lemma}\label{lem.hnabla243}
	We have
	\[\|(hD_s)^2\phi^{\mathrm{cut}}_2\|^{ 2}+\|(hD_u)^2\phi^{\mathrm{cut}}_2\|^{ 2} \leq Ch^{ \frac43}\|\phi^{\mathrm{cut}}_2\|^2\,.\]		
\end{lemma}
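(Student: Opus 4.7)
The plan is to chain three bounds: first control $\|\mathcal{M}_{h,\alpha}\phi^{\mathrm{cut}}_2\|$ via the almost-eigenvalue equation, then remove the potential contribution using the Agmon estimate of Proposition~\ref{prop.loch23} (thereby controlling the kinetic part $\|P\phi^{\mathrm{cut}}_2\|$ where $P = hD_s m_\theta^{-2} hD_s + hD_u (J'_\theta)^{-2} hD_u$), and finally use ellipticity of $P$, whose coefficients are close to constants on the cutoff support, to extract the individual second-derivative norms. Concretely, Lemma~\ref{lem.cut} yields $\mathcal{M}_{h,\alpha}\phi^{\mathrm{cut}}_2 = \lambda\phi^{\mathrm{cut}}_2 + r_h$ with $\|r_h\| = \mathscr{O}(h^\infty)\|\phi^{\mathrm{cut}}_2\|$, and since $|\lambda| \leq Ch^{2/3}$ (from $\lambda\in D(z_1e^{2i\alpha/3}h^{2/3}, Rh)$), one obtains $\|\mathcal{M}_{h,\alpha}\phi^{\mathrm{cut}}_2\|^2 \leq Ch^{4/3}\|\phi^{\mathrm{cut}}_2\|^2$.

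Writing $\mathcal{M}_{h,\alpha} = P + V$ with $V(s,u) = e^{i\alpha}\Gamma_1(s,J_\theta(u))$, a Taylor expansion at the origin (using Assumption~\ref{geoassumption0}) gives $|V| \leq C(u + s^2)$ on a fixed neighborhood; Proposition~\ref{prop.loch23} combined with the elementary bounds $\sup_{u\geq 0} u^2 e^{-2u/h^{2/3}} = \mathscr{O}(h^{4/3})$ and $\sup_s s^4 e^{-2|s|/h^{2/3}} = \mathscr{O}(h^{8/3})$ then yields $\|V\phi^{\mathrm{cut}}_2\|^2 \leq Ch^{4/3}\|\phi^{\mathrm{cut}}_2\|^2$, hence $\|P\phi^{\mathrm{cut}}_2\|^2 \leq Ch^{4/3}\|\phi^{\mathrm{cut}}_2\|^2$. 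On the support of $\phi^{\mathrm{cut}}_2$ one has $\chi \equiv 1$, so $(J'_\theta)^{-2}$ equals the constant $e^{2i\alpha/3}$ and $m_\theta^{-2} = 1 + \mathscr{O}(h^{2/3-\eta})$; the Leibniz rule then gives $P\phi^{\mathrm{cut}}_2 = m_\theta^{-2}(hD_s)^2\phi^{\mathrm{cut}}_2 + e^{2i\alpha/3}(hD_u)^2\phi^{\mathrm{cut}}_2 + R$, where $\|R\| \leq Ch\,\|hD_s\phi^{\mathrm{cut}}_2\| \leq Ch^{4/3}\|\phi^{\mathrm{cut}}_2\|$ by Lemma~\ref{lem.hnabla23}. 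Consequently,
\[
\|m_\theta^{-2}(hD_s)^2\phi^{\mathrm{cut}}_2 + e^{2i\alpha/3}(hD_u)^2\phi^{\mathrm{cut}}_2\|^2 \leq Ch^{4/3}\|\phi^{\mathrm{cut}}_2\|^2.
\]

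The main obstacle is then to split this mixed expression into its individual components. Setting $U = (hD_s)^2\phi^{\mathrm{cut}}_2$ and $W = (hD_u)^2\phi^{\mathrm{cut}}_2$, I expand $\|m_\theta^{-2}U + e^{2i\alpha/3}W\|^2 = \|m_\theta^{-2}U\|^2 + 2\Re\langle m_\theta^{-2}U, e^{2i\alpha/3}W\rangle + \|W\|^2$; the diagonal parts give $(1+\mathscr{O}(h^{2/3-\eta}))\|U\|^2 + \|W\|^2$. For the cross term, two integrations by parts---legitimate because $\phi^{\mathrm{cut}}_2$ is periodic in $s$ on $\mathbb{T}_{2L}$, vanishes at $u=0$ by the Dirichlet condition, and has compact $u$-support thanks to the cutoff $\chi_h$---yield the key identity
\[
\langle (hD_s)^2\phi^{\mathrm{cut}}_2, (hD_u)^2\phi^{\mathrm{cut}}_2\rangle = \|hD_s hD_u\phi^{\mathrm{cut}}_2\|^2 \geq 0,
\]
which is real and nonnegative. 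Therefore $\Re\langle m_\theta^{-2}U, e^{2i\alpha/3}W\rangle = \cos(2\alpha/3)\|hD_shD_u\phi^{\mathrm{cut}}_2\|^2 + \mathscr{O}(h^{2/3-\eta})\|U\|\|W\|$. Since $\cos(2\alpha/3) > 0$ for $\alpha \in [0,3\pi/5) \subset [0,3\pi/4)$, the leading contribution is nonnegative; the $\mathscr{O}(h^{2/3-\eta})$ perturbation is absorbed into $\|U\|^2 + \|W\|^2$ via Young's inequality for $h$ small enough, giving $\|U\|^2 + \|W\|^2 \leq 2\|m_\theta^{-2}U + e^{2i\alpha/3}W\|^2 \leq Ch^{4/3}\|\phi^{\mathrm{cut}}_2\|^2$, which is the claimed bound.
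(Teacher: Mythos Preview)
Your argument is correct, but it follows a genuinely different route from the paper's proof. The paper proceeds by a commutator/bootstrap: it applies $hD_s$ to the quasi-eigenvalue identity $\mathcal{M}_{h,\alpha}\phi^{\mathrm{cut}}_2=\lambda\phi^{\mathrm{cut}}_2+r_h$, pairs with $hD_s\phi^{\mathrm{cut}}_2$, estimates the commutator $[hD_s,\mathcal{M}_{h,\alpha}]$, and then reuses the coercivity argument of Lemma~\ref{lem.hnabla23} on the function $hD_s\phi^{\mathrm{cut}}_2$ (and likewise with $hD_u$). In other words, the paper iterates the first-order estimate rather than bounding the full second-order part in one shot.

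Your approach is the classical $L^2$ elliptic-regularity trick: bound $\|P\phi^{\mathrm{cut}}_2\|$ directly from the equation and the potential estimate (via the Agmon bound of Proposition~\ref{prop.loch23}), and then split $\|(hD_s)^2\phi^{\mathrm{cut}}_2\|$ and $\|(hD_u)^2\phi^{\mathrm{cut}}_2\|$ using the identity $\langle (hD_s)^2f,(hD_u)^2f\rangle=\|hD_shD_uf\|^2\geq 0$, valid here because $hD_s\phi^{\mathrm{cut}}_2$ inherits the Dirichlet condition at $u=0$ and compact $u$-support from the cutoff. This is arguably cleaner for this particular lemma and yields the mixed-derivative bound $\|hD_shD_u\phi^{\mathrm{cut}}_2\|^2\leq Ch^{4/3}\|\phi^{\mathrm{cut}}_2\|^2$ for free; it hinges on $\cos(2\alpha/3)>0$, which holds on the whole range $\alpha\in[0,3\pi/5)$. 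The paper's commutator scheme, by contrast, is the template that is reused verbatim later (Propositions~\ref{prop.locs} and~\ref{prop.controlf}), so its value is methodological consistency rather than brevity at this step.
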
	
\begin{proof}
By Lemma \ref{lem.cut}, we have
	\[\langle hD_s(\mathcal{M}_{h,\alpha}\phi^{\mathrm{cut}}_2), hD_s\phi_2^{\mathrm{cut}}\rangle=\lambda \|hD_s \phi^{\mathrm{cut}}_2\|^2+\langle r_h, (hD_s)^2\phi_2^{\mathrm{cut}}\rangle\,.\]
	Thus, after computing a commutator, we get
		\begin{multline*}
		\langle \mathcal{M}_{h,\alpha}(hD_s\phi^{\mathrm{cut}}_2), hD_s\phi_2^{\mathrm{cut}}\rangle\\
		=\lambda \|hD_s \phi^{\mathrm{cut}}_2\|^2+\langle r_h, (hD_s)^2\phi_2^{\mathrm{cut}}\rangle+h\mathscr{O}(\|\phi^{\mathrm {cut}}_2\|\|hD_s\phi_2^{\mathrm{cut}}\|)
		+h^2\mathscr{O}(\|hD_s\phi_2^{\mathrm{cut}}\|^2)\\
		+h\mathscr{O}(\|(hD_s)^2\phi_2^{\mathrm{cut}}\|\|hD_s\phi_2^{\mathrm {cut}}\|)\,.
		\end{multline*}
		Then, we use Lemma \ref{lem.hnabla23} and we proceed as in its the proof to get
		\[\|(hD_s)^2\phi_2^{\mathrm{cut}}\|^2+\|(hD_u)(hD_s\phi_2^{\mathrm{cut}})\|^2\leq Ch^{\frac43}\|\phi_2^{\mathrm{cut}}\|^2+Ch^{\frac43}\|(hD_s)^2\phi_2^{\mathrm{cut}}\|^2\,.\]
		We proceed in the same way to get the control of $(hD_u)^2\phi_2^{\mathrm{cut}}$.
		The conclusion follows.
	\end{proof}	
 Let us consider the following intermediate operator,
\[\mathcal{N}_{h,\alpha}=e^{\frac{2i\alpha}{3}}(h^2D^2_u+u)+h^2D_s^2+e^{i\alpha}\gamma_1(s)\,,\]	
	which differs from $\mathscr{N}_{h,\alpha}$ in \eqref{eq.Nhalpha} only through its potential part.
\begin{proposition}\label{prop.quasimode43}
We have
\[\mathcal{N}_{h,\alpha}\phi_2^{\mathrm{cut}}=\lambda\phi_2^{\mathrm{cut}}+R_h\,,\]
with
\[
R_h=r_h+u e^{2i\alpha/3}(\mathbf{n}_1(s)-1)\phi_2^{\mathrm{cut}}-(hD_sm_\theta^{-2})hD_s\phi_2^{\mathrm{cut}}
+(1-m_\theta^{-2})(hD_s)^2\phi_2^{\mathrm{cut}}
\]
and
\[\|R_h\|=\mathscr{O}(h^{\frac43-\eta}) \|\phi^{\mathrm{cut}}_2\|\,.\]	
	\end{proposition}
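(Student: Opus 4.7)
The plan is to deduce the proposition from Lemma \ref{lem.cut} by a direct computation of $\mathcal{N}_{h,\alpha}-\mathcal{M}_{h,\alpha}$ followed by term-by-term estimates of its action on $\phi_2^{\mathrm{cut}}$. On the support of $\phi_2^{\mathrm{cut}}$, namely $\{|s|\lesssim h^{1/3-\eta},\ 0<u\lesssim h^{2/3-\eta}\}$, the cutoff $\chi$ entering $J_\theta$ is identically $1$, so $J_\theta(u)=ue^{-i\alpha/3}$ and $(J'_\theta)^{-2}=e^{2i\alpha/3}$ there; in particular the two $u$-kinetic terms coincide. A Leibniz expansion yields
\[
h^2D_s^2-hD_s(m_\theta^{-2})hD_s=-(hD_s m_\theta^{-2})hD_s+(1-m_\theta^{-2})(hD_s)^2,
\]
while the potential difference $e^{2i\alpha/3}u+e^{i\alpha}\gamma_1(s)-e^{i\alpha}\Gamma_1(s,J_\theta(u))$ reduces to $u$ times a smooth factor which, thanks to the Taylor expansion $\Gamma_1(s,t)=t+\frac{\kappa_0}{2}s^2+\mathscr{O}(ts^2+|s|^3)$ recalled in Section \ref{sec.heur}, vanishes to second order in $s$ at $s=0$. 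Combined with $\lambda\phi_2^{\mathrm{cut}}+r_h$ from Lemma \ref{lem.cut}, this produces the algebraic identity defining $R_h$.

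Next I would bound $\|R_h\|$ summand by summand using the estimates already in hand. By Lemma \ref{lem.cut}, $\|r_h\|=\mathscr{O}(h^\infty)\|\phi_2^{\mathrm{cut}}\|$. The potential term is of the form $u\cdot\mathscr{O}(s^2)\,\phi_2^{\mathrm{cut}}$; the crude support bound $|u||s|^2\lesssim h^{2/3-\eta}\cdot h^{2/3-2\eta}$ already yields $\mathscr{O}(h^{4/3-3\eta})\|\phi_2^{\mathrm{cut}}\|$, and can be improved to $\mathscr{O}(h^2)\|\phi_2^{\mathrm{cut}}\|$ via the Agmon weights of Proposition \ref{prop.loch23} if needed. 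For the cross-term $(hD_s m_\theta^{-2})hD_s\phi_2^{\mathrm{cut}}$, combining $\|hD_s m_\theta^{-2}\|_{L^\infty}=\mathscr{O}(h)$ with the first-order control $\|hD_s\phi_2^{\mathrm{cut}}\|\leq Ch^{1/3}\|\phi_2^{\mathrm{cut}}\|$ of Lemma \ref{lem.hnabla23} gives $\mathscr{O}(h^{4/3})\|\phi_2^{\mathrm{cut}}\|$.

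The dominant contribution, which fixes the exponent $4/3-\eta$, is $(1-m_\theta^{-2})(hD_s)^2\phi_2^{\mathrm{cut}}$. From \eqref{eq.mtheta} one has $m_\theta^{-2}-1=\mathscr{O}(u)$, and using the support bound $|u|\lesssim h^{2/3-\eta}$ together with the second-order estimate $\|(hD_s)^2\phi_2^{\mathrm{cut}}\|\leq Ch^{2/3}\|\phi_2^{\mathrm{cut}}\|$ of Lemma \ref{lem.hnabla243} produces exactly $\mathscr{O}(h^{4/3-\eta})\|\phi_2^{\mathrm{cut}}\|$. I expect the real obstacle to be precisely the availability of this sharp $H^2$-type bound: all other pieces of $R_h$ are of the same order or smaller once it is in place, but Lemma \ref{lem.hnabla243} itself rests on a careful choice of $\beta$ with $(\alpha,\beta)\in\mathcal{T}$ so that $\Re(e^{-i\beta}\mathcal{M}_{h,\alpha})$ retains enough coercivity after two integrations by parts. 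No boundary issue arises, since $\phi_2^{\mathrm{cut}}$ satisfies Dirichlet conditions on both $\{u=0\}$ and $\{u=\delta\}$ by construction of the cutoff.
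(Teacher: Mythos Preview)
Your proposal is correct and follows essentially the same route as the paper: start from Lemma~\ref{lem.cut}, compute $\mathcal{N}_{h,\alpha}-\mathcal{M}_{h,\alpha}$ explicitly on the support of $\phi_2^{\mathrm{cut}}$ (where $\chi\equiv1$ in $J_\theta$), and then estimate each piece of $R_h$ using support bounds together with Lemmas~\ref{lem.hnabla23} and~\ref{lem.hnabla243}. The paper's proof is essentially a one-line reference to exactly these ingredients, and your write-up simply unpacks them; in particular you correctly single out $(1-m_\theta^{-2})(hD_s)^2\phi_2^{\mathrm{cut}}$ as the term that fixes the exponent $4/3-\eta$, and you use the key observation $\mathbf{n}_1(s)\mp 1=\mathscr{O}(s^2)$ for the potential term, just as the paper does.

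One cosmetic remark: your crude $L^\infty$ bound on the potential term gives $\mathscr{O}(h^{4/3-3\eta})$ rather than $\mathscr{O}(h^{4/3-\eta})$, so to match the stated exponent you do need the Agmon improvement you mention (or simply absorb the loss into a relabelled $\eta$, since $\eta>0$ is arbitrary and only the qualitative smallness matters downstream). Your parenthetical claim that Agmon gives $\mathscr{O}(h^2)$ is slightly optimistic at this stage---Proposition~\ref{prop.loch23} localizes $s$ at scale $h^{2/3}$, not $h^{1/2}$---but any improvement beyond $h^{4/3-\eta}$ is already more than enough.
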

\begin{proof}
It is sufficient to use Lemma \ref{lem.cut} and the explicit expression of $\mathcal{M}_{h,\alpha}$. The estimate of $R_h$ follows from support considerations and Lemma \ref{lem.hnabla243} (note also that $\mathbf{n}_1(s)-1=\mathscr{O}(s^2)$).
\end{proof}

\subsubsection{Refined localization in $s$}
 Thanks to the Agmon estimates in Proposition \ref{prop.loch23}, we have proved so far a localization of order $h^{\frac13}$ in the variable $s$. We improve this in the following proposition.
\begin{proposition}\label{prop.locs}
Consider $\chi_0\in\mathscr{C}^\infty_0(\mathbb{R})$ equal to $1$ in a neighborhood of $0$ and $\eta>0$. Then,
\begin{equation}\label{eq.locs}
\phi_2^{\mathrm{cut}}=\chi_0(h^{-\frac12+\eta}s)\phi_2^{\mathrm{cut}}+\mathscr{O}(h^{\infty})\,,
\end{equation}
where the remainder is estimated in $H^1$-norm.

Moreover, for all $(\alpha_1,\alpha_2,\alpha_3)\in\mathbb{N}^3$,
\begin{equation}\label{eq.hDsopt}
\|s^{\alpha_1}(hD_s)^{\alpha_2}(hD_u)^{\alpha_3}\phi_2^{\mathrm{cut}}\|\leq Ch^{\frac{\alpha_3}{3}+\frac{\alpha_1+\alpha_2}{2}}\|\phi_2^{\mathrm{cut}}\|\,.	
\end{equation}
\end{proposition}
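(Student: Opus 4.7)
The strategy is to upgrade the $h^{1/3}$-scale Agmon bounds of Proposition~\ref{prop.loch23} and the $\|hD_s\phi_2^{\mathrm{cut}}\|=O(h^{1/3})$ bound of Lemma~\ref{lem.hnabla23} to the optimal harmonic-oscillator scaling $h^{1/2}$ in the $s$-direction. The reason this is possible is that, as a quasi-mode for the separable model operator $\mathcal{N}_{h,\alpha}$ (Proposition~\ref{prop.quasimode43}), the function $\phi_2^{\mathrm{cut}}$ spends its Airy ground-state energy $z_1 h^{2/3}$ in the $u$-direction, leaving only an $O(h)$ budget to support the complex-harmonic-oscillator dynamics in $s$.

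First I would establish the base moment bounds. Pairing $(\mathcal{N}_{h,\alpha}-\lambda)\phi_2^{\mathrm{cut}}=R_h$ with $\phi_2^{\mathrm{cut}}$, multiplying by $e^{-i\alpha/2}$ (so the coefficients of $h^2D_u^2+u$ and of $\gamma_1$ become $e^{i\alpha/6}$ and $e^{i\alpha/2}$, both of positive real part for $\alpha<3\pi/5$), and taking real parts, I combine the Airy min-max bound $\|hD_u\varphi\|^2+\langle u\varphi,\varphi\rangle\geq z_1 h^{2/3}\|\varphi\|^2$ with $\Re(e^{-i\alpha/2}\lambda)=z_1 h^{2/3}\cos(\alpha/6)+O(h)$ to cancel the Airy ground-state contribution. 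The resulting identity
\[
\cos(\alpha/2)\bigl(\|hD_s\phi_2^{\mathrm{cut}}\|^2+\textstyle\int\gamma_1|\phi_2^{\mathrm{cut}}|^2\bigr)+\cos(\alpha/6)\bigl(\|hD_u\phi_2^{\mathrm{cut}}\|^2+\langle u\phi_2^{\mathrm{cut}},\phi_2^{\mathrm{cut}}\rangle-z_1h^{2/3}\|\phi_2^{\mathrm{cut}}\|^2\bigr)=O(h)\|\phi_2^{\mathrm{cut}}\|^2
\]
has a nonnegative left-hand side, and its two pieces are each $O(h)\|\phi_2^{\mathrm{cut}}\|^2$, yielding $\|s\phi_2^{\mathrm{cut}}\|+\|hD_s\phi_2^{\mathrm{cut}}\|=O(h^{1/2}\|\phi_2^{\mathrm{cut}}\|)$.

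To obtain all higher moments and the localization \eqref{eq.locs}, I would run a weighted Agmon estimate with weight $\Phi_h(s)=\rho\tilde\Phi(s)/h$, where $\tilde\Phi(s)=s^2$ near $0$ and $\tilde\Phi$ is a smooth, bounded extension, with $\rho>0$ small enough that $4\rho^2<\cos(\alpha/2)\kappa_0/2$. Conjugating $e^{-i\alpha/2}\mathcal{N}_{h,\alpha}$ by $e^{\Phi_h}$ subtracts $|h\partial_s\Phi_h|^2=4\rho^2 s^2$ from the $s$-potential; the effective potential $\gamma_1(s)-|h\partial_s\Phi_h|^2\geq c's^2$ still dominates the $O(h)$ budget once $|s|\geq Ch^{1/2}$, and on the bad region $|s|\leq Ch^{1/2}$ the weight $\Phi_h$ is uniformly bounded. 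Repeating the base-case argument with $e^{\Phi_h}\phi_2^{\mathrm{cut}}$ in place of $\phi_2^{\mathrm{cut}}$ and absorbing the bad-region contribution yields $\|e^{\Phi_h}\phi_2^{\mathrm{cut}}\|\leq C\|\phi_2^{\mathrm{cut}}\|$. The pointwise estimate $s^{2\alpha_1}e^{-2\Phi_h(s)}\leq C(\alpha_1)h^{\alpha_1}$ then immediately gives the $s$-moments $\|s^{\alpha_1}\phi_2^{\mathrm{cut}}\|\leq Ch^{\alpha_1/2}\|\phi_2^{\mathrm{cut}}\|$. The $(hD_s)^{\alpha_2}$-moments follow from the twisted gradient version of the same estimate combined with the commutator $[hD_s,e^{\Phi_h}]=-ih(\partial_s\Phi_h)e^{\Phi_h}$ and iteration on $hD_s\phi_2^{\mathrm{cut}}$; the $(hD_u)^{\alpha_3}$-moments come from bootstrapping the Airy-form inequality, whose natural scale $h^{2/3}$ for $h^2D_u^2+u$ gives the Airy-scaling $h^{\alpha_3/3}$. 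Finally, \eqref{eq.locs} follows from the $s$-moments by Chebyshev: for every $N\in\N$,
\[
\|(1-\chi_0(h^{-1/2+\eta}s))\phi_2^{\mathrm{cut}}\|_{L^2}\leq h^{-N(1/2-\eta)}\,\|s^N\phi_2^{\mathrm{cut}}\|=O(h^{N\eta}),
\]
which is $O(h^\infty)$, and the $H^1$ analogue uses the derivative moments in the same way.

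The principal obstacle is controlling the error $\langle R_h,e^{2\Phi_h}\phi_2^{\mathrm{cut}}\rangle$ in the weighted identity: although $\|R_h\|=O(h^{4/3-\eta})$, the weight $e^{\Phi_h}$ is not globally bounded, so a crude Cauchy--Schwarz is ruinous. Instead one must exploit the explicit structure of $R_h=r_h+O(u)\phi_2^{\mathrm{cut}}+O(h)hD_s\phi_2^{\mathrm{cut}}+O(u)(hD_s)^2\phi_2^{\mathrm{cut}}$ from Proposition~\ref{prop.quasimode43}: the a~priori localization of Proposition~\ref{prop.loch23} confines the bulk of $\phi_2^{\mathrm{cut}}$ to a region where $\Phi_h$ is manageable, the vanishing factors $O(u)$ and $O(s^2)$ near the origin gain smallness against the weight, and the gradient bounds of Lemmas~\ref{lem.hnabla23}--\ref{lem.hnabla243} supply the remaining control. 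The analogous bootstrap for $(hD_s)^{\alpha_2}$-moments with $\alpha_2\geq 2$, which requires commuting $hD_s$ through the variable coefficients of $\mathcal{M}_{h,\alpha}$ and tracking the induced $h\partial_s$-corrections, is similarly delicate.
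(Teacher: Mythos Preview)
Your base-case argument---pairing with $e^{-i\alpha/2}\phi_2^{\mathrm{cut}}$, invoking the Airy min-max lower bound, and cancelling the ground-state energy against $\Re(e^{-i\alpha/2}\lambda)$---is exactly what the paper does to obtain $\|s\phi_2^{\mathrm{cut}}\|+\|hD_s\phi_2^{\mathrm{cut}}\|=O(h^{1/2})\|\phi_2^{\mathrm{cut}}\|$. From there the two arguments diverge.

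For the localization \eqref{eq.locs}, the paper does \emph{not} use an exponential Agmon weight in $s$. Instead it inserts a bounded far-cutoff $\chi_{\mathrm{far},h}(s)=\chi_{\mathrm{far}}(h^{-1/2+\eta}s)$, supported where $\gamma_1(s)\gtrsim h^{1-2\eta}$, and repeats the same real-part estimate on $\chi_{\mathrm{far},h}\phi_2^{\mathrm{cut}}$. This produces a self-improving inequality of the form $h^{1-2\eta}\|\chi_{\mathrm{far},h}\phi_2^{\mathrm{cut}}\|^2\leq Ch^{4/3-3\eta}\|\underline{\chi}_{\mathrm{far},h}\phi_2^{\mathrm{cut}}\|^2+O(h^\infty)$ with a slightly wider cutoff $\underline{\chi}_{\mathrm{far},h}$ on the right; iterating gives $O(h^\infty)$. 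For the higher moments \eqref{eq.hDsopt}, the paper proceeds by direct induction, commuting $s$ and $hD_s$ through $\mathcal{N}_{h,\alpha}-\lambda$ and re-pairing, rather than reading them off an exponential bound.

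Your exponential-weight route is a legitimate alternative and, once the bound $\|e^{\rho s^2/h}\phi_2^{\mathrm{cut}}\|\leq C\|\phi_2^{\mathrm{cut}}\|$ is in hand, extracting all $s$-moments at once via $s^{2\alpha_1}\leq C(\alpha_1)h^{\alpha_1}e^{2\rho s^2/h}$ is arguably slicker than the paper's term-by-term induction. The price, as you correctly flag, is that the error $\langle R_h,e^{2\Phi_h}\phi_2^{\mathrm{cut}}\rangle$ cannot be bounded by $\|R_h\|\cdot\|e^{2\Phi_h}\phi_2^{\mathrm{cut}}\|$ and really does require the termwise structural analysis you outline (rewriting $hD_s\phi$ in terms of $hD_s(e^{\Phi_h}\phi)$ and $(h\partial_s\Phi_h)e^{\Phi_h}\phi$, integrating the $(hD_s)^2$ term by parts, and absorbing the $O(u)$ and $O(s^2)$ prefactors into the effective potential). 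The paper's bounded-cutoff approach sidesteps this entirely: since $|\chi_{\mathrm{far},h}|\leq 1$, the terms of $R_h$ can be estimated almost crudely, using only the support of $\phi_2^{\mathrm{cut}}$ and Lemmas~\ref{lem.hnabla23}--\ref{lem.hnabla243}. So your method is more unified but front-loads the delicate work; the paper's is more pedestrian but each step is lighter.
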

\begin{proof}
Let us consider $\chi_{\mathrm{far},h}(s)=\chi_{\mathrm{far}}(h^{-\frac12+\eta}s)$, where $\chi_{\mathrm{far}}$ is supported away from $0$ and equal to $1$ away form a compact set.  Then, we use Proposition \ref{prop.quasimode43} to get that
\begin{multline}\label{eq.N-lambdachifar}
\langle (\mathcal{N}_{h,\alpha}-\lambda)
(\chi_{\mathrm{far},h}\phi_{2}^{\mathrm{cut}}),\chi_{\mathrm{far},h}\phi_{2}^{\mathrm{cut}}\rangle
 =\langle \chi_{\mathrm{far},h}R_h,\chi_{\mathrm{far},h}\phi_{2}^{\mathrm{cut}}\rangle \\
 \qquad +\langle[\mathcal{N}_{h,\alpha},\chi_{\mathrm{far},h}]\phi_{2}^{\mathrm{cut}},
\chi_{\mathrm{far},h}\phi_{2}^{\mathrm{cut}}\rangle\,.
\end{multline}
 Notice that $-\alpha/2+2\alpha/3 = \alpha/6$ and that, for all $\Psi\in H^1_0(\mathbb{R}_+)$,
\begin{equation}\label{eq.minoairy}
\cos(\alpha/6)\langle (h^2D^2_u+u- z_1h^{\frac23} )\Psi,\Psi\rangle\geq 0\,.
\end{equation}
This, combined with the fact that $\gamma_1(s)$ is bounded from below by $h^{1-2\eta}$ on the support of
$\chi_{\mathrm{far},h}$, implies that, for some $c>0$,
\begin{multline*}
\Re\langle(e^{-i\alpha/2}\big( (\mathcal{N}_{h,\alpha}-\lambda)(\chi_{\mathrm{far},h}\phi_{2}^{\mathrm{cut}}),
\chi_{\mathrm{far},h}\phi_{2}^{\mathrm{cut}}\big)\rangle\\
\geq ch^{1-2\eta}\|\chi_{\mathrm{far},h}\phi_2^{\mathrm{cut}}\|^2
+c\|hD_s(\chi_{\mathrm{far},h}\phi_{2}^{\mathrm{cut}})\|^2\,.
\end{multline*}
From the support properties of $\phi_{2}^{\mathrm{cut}}$ (see before Lemma \ref{lem.cut}) and Proposition \ref{prop.quasimode43}, we have
\begin{equation*}
	\begin{split}
\big|\langle \chi_{\mathrm{far},h}R_h,\chi_{\mathrm{far},h}\phi_{2}^{\mathrm{cut}}\rangle\big|\leq&
\mathscr{O}(h^\infty)\|\phi_2^{\mathrm{cut}}\|^2
+Ch^{\frac43-2\eta}\|{\chi}_{\mathrm{far},h}\phi_2^{\mathrm{cut}}\|^2\\
&+Ch\|hD_s(\chi_{\mathrm{far},h}\phi_{2}^{\mathrm{cut}})
\|\|\chi_{\mathrm{far},h}\phi_{2}^{\mathrm{cut}}\|\\
&+|\langle(1-m_\theta^{-2})\chi_{\mathrm{far},h}(hD_s)^2\phi_2^{\mathrm{cut}},
\chi_{\mathrm{far},h}\phi_{2}^{\mathrm{cut}}\rangle|\,,
\end{split}
\end{equation*}
where we used that $u(\mathbf{n}_1(s)-1)=\mathscr{O}(us^2)$.

Using a commutator in the last term of the previous expression, we also have
\begin{multline*}
|\langle(1-m_\theta^{-2})\chi_{\mathrm{far},h}(hD_s)^2\phi_2^{\mathrm{cut}},
\chi_{\mathrm{far},h}\phi_{2}^{\mathrm{cut}}\rangle|\leq\ Ch^{\frac23-\eta}\|(hD_s)(\chi_{\mathrm{far},h}\phi_2^{\mathrm{cut}})\|^2\\
+Ch^{\frac43-3\eta}\|\underline{\chi}_{\mathrm{far},h}\phi_2^{\mathrm{cut}}\|^2
+Ch\|hD_s(\chi_{\mathrm{far},h}\phi_{2}^{\mathrm{cut}})
\|\|\chi_{\mathrm{far},h}\phi_{2}^{\mathrm{cut}}\|+\mathscr{O}(h^\infty)\|\phi_2^{\mathrm{cut}}\|^2\,,
\end{multline*}
where $\underline{\chi}_{\mathrm{far}}$ has the same properties as $\chi_{\mathrm{far}}$ and is such that $\underline{\chi}_{\mathrm{far}}\chi_{\mathrm{far}}=\chi_{\mathrm{far}}$.

In a similar way, we get that
\begin{multline*}
	\langle[\mathcal{N}_{h,\alpha},\chi_{\mathrm{far},h}]\phi_{2}^{\mathrm{cut}},
\chi_{\mathrm{far},h}\phi_{2}^{\mathrm{cut}}\rangle|\leq Ch^{\frac43-2\eta}\|\underline{\chi}_{\mathrm{far},h}\phi_2^{\mathrm{cut}}\|^2\\
	+Ch\|hD_s(\chi_{\mathrm{far},h}\phi_{2}^{\mathrm{cut}})\|\|\chi_{\mathrm{far},h}\phi_{2}^{\mathrm{cut}}\|+\mathscr{O}(h^\infty)\|\phi_2^{\mathrm{cut}}\|^2\,.\end{multline*}
It follows that
\[\frac{c}{2}\|hD_{s}(\chi_{\mathrm{far},h}\phi_{2}^{\mathrm{cut}})\|^2+ch^{1-2\eta}\|\chi_{\mathrm{far},h}\phi_2^{\mathrm{cut}}\|^2\leq Ch^{\frac43-3\eta}\|\underline{\chi}_{\mathrm{far},h}\phi_2^{\mathrm{cut}}\|^2+\mathscr{O}(h^\infty)\|\phi_2^{\mathrm{cut}}\|^2\,.\]
By choosing $\eta$ small enough, and by using an induction argument, we get that, for all $N\in\mathbb{N}$,
\[\chi_{\mathrm{far},h}\phi_2^{\mathrm{cut}}=\mathscr{O}(h^N)\|\phi_2^{\mathrm{cut}}\|\,,\]
in $H^1$-norm with respect to $s$. Coming back to \eqref{eq.N-lambdachifar}, we also get the control of $hD_u$. This gives \eqref{eq.locs}.

Let us now turn to \eqref{eq.hDsopt}, which are better estimates than those in Lemmas \ref{lem.hnabla23} and \ref{lem.hnabla243}. We again use Proposition \ref{prop.quasimode43} and see that
\[\Re e^{-i\alpha/2}\langle (\mathcal{N}_{h,\alpha}-\lambda)\phi_{2}^{\mathrm{cut}},\phi_{2}^{\mathrm{cut}}\rangle\leq|\langle R_h,\phi_{2}^{\mathrm{cut}}\rangle|\,.\]
By using \eqref{eq.minoairy}, we get that
\begin{equation*}
\|s\phi_2^{\mathrm{cut}}\|^2+\|hD_s\phi_2^{\mathrm{cut}}\|^2\leq Ch\|\phi_2^{\mathrm{cut}}\|^2+Ch^{\frac23-\eta}\|(hD_s)\phi_2^{\mathrm{cut}}\|^2+Ch\|(hD_s)\phi_2^{\mathrm{cut}}\|\|\phi_2^{\mathrm{cut}}\|\,,
\end{equation*}
and the estimate for $|(\alpha_1,\alpha_2,\alpha_3)|=1$ follows from the Young inequality. Let us now deal with $|(\alpha_1,\alpha_2,\alpha_3)|=2$. We have
\begin{equation}\label{eq.hDshDu}
\Re e^{-i\alpha/2}\langle (hD_s)(\mathcal{N}_{h,\alpha}-\lambda)\phi_{2}^{\mathrm{cut}},hD_s\phi_{2}^{\mathrm{cut}}\rangle\leq|\langle R_h,(hD_s)^2\phi_{2}^{\mathrm{cut}}\rangle|\,.
\end{equation}
Thus,
\begin{multline*}
c\|shD_s\phi_2^{\mathrm{cut}}\|^2+\|(hD_s)^2\phi_2^{\mathrm{cut}}\|^2\leq Ch\|hD_s\phi_2^{\mathrm{cut}}\|^2+Ch\|s\phi_2^{\mathrm{cut}}\|\|hD_s\phi_2^{\mathrm{cut}}\|\\
+|\langle R_h,(hD_s)^2\phi_{2}^{\mathrm{cut}}\rangle|\,.
\end{multline*}
By using the Young inequality and Proposition \ref{prop.quasimode43} to deal with the last term, we get
\[\|shD_s\phi_2^{\mathrm{cut}}\|^2+\|(hD_s)^2\phi_2^{\mathrm{cut}}\|^2\leq Ch^2\|\phi_2^{\mathrm{cut}}\|^2\,.\]
Coming back to \eqref{eq.hDshDu}, we also get
\[\|hD_u(hD_s\phi_2^{\mathrm{cut}})\|^2\leq Ch^{\frac23}\|hD_s\phi_2^{\mathrm{cut}}\|^2+Ch^{2+\frac43-\eta}\|\phi_2^{\mathrm{cut}}\|^2\leq Ch^{\frac23+1}\|\phi_2^{\mathrm{cut}}\|^2\,.\]
To get the control of $s^2$, it is sufficient to notice that
\[\Re e^{-i\alpha/2}\langle s(\mathcal{N}_{h,\alpha}-\lambda)\phi_{2}^{\mathrm{cut}},s\phi_{2}^{\mathrm{cut}}\rangle\leq|\langle R_h,s^2\phi_{2}^{\mathrm{cut}}\rangle|\,,\]
and to estimate again a commutator. This concludes the case $|(\alpha_1,\alpha_2,\alpha_3)|=2$. The proof of \eqref{eq.hDsopt} for general $(\alpha_1,\alpha_2,\alpha_3)$  follows then by induction using the same method.
\end{proof}

\subsubsection{Proof of Proposition \ref{prop.quasimode}}
 We are now in position to complete the proof of Proposition \ref{prop.quasimode}, namely that
$\phi_2^{\mathrm{cut}}$ is indeed a good quasimode for $\mathscr{N}_{h,\alpha}$. For this, we consider
an operator $P_h$ defined through:
\begin{equation}\label{eq.MNP}
\mathcal{M}_{h,\alpha}=\mathscr{N}_{h,\alpha}+P_h\,.
\end{equation}
 We check that $P_h$ can be written in the following way:
\[P_h=hur_{h,1}(s,u)hD_s+ur_{h,2}(s,u)(hD_s)^2+r_{h,3}(s,u) us^2+r_{h,4}(s,u)s^3+r_{h,5}(s,u)\,,\]

where the remainders $r_{h,j}$ all belong to $S_{\mathbb{R}^2}(1)$ and $r_{h,5}$ with support avoiding a fixed neighborhood of $(0,0)$. We recall that $S_{\mathbb{R}^2}(1)=\{a\in\mathscr{C}^\infty(\R^2) : \forall\alpha\in\N^2\,,\quad \exists C_\alpha>0 : \forall x\in\R^2 : |\partial^\alpha a(x)|\leq C_\alpha  \}$. For the other terms, we use the support property in the variable $u$ and Proposition \ref{prop.locs} to get
$$
\|P_h\phi_2^{\mathrm{cut}}\|=\mathscr{O}(h^{\frac32-3\eta})\|\phi_2^{\mathrm{cut}}\|\,.
$$
The conclusion follows.

\subsection{Quasimodes for $(\mathscr{N}_{h,\alpha}-\lambda)^2$} \label{sec.quasimodes2}
 Since we are in a non-selfadjoint context, the algebraic and geometric dimension associated with a given eigenvalue $\lambda$ may differ. For further use, we now deal with localization estimates similar to the ones in Proposition \ref{prop.quasimode}, but in the case of generalized eigenfunctions. For this, let us consider such a $\lambda\in D(z_1e^{\frac{2i\alpha}{3}}h^{\frac23},Rh)$ associated with $\varphi = (\varphi_1,\phi_2) \in\ker(\mathcal{M}_{h,\alpha}-\lambda)^2$ with $\|\varphi\|=1$ and such that $\varphi\notin\ker(\mathcal{M}_{h,\alpha}-\lambda)$ (if it exists). We still denote
\[\phi^{\mathrm{cut}}_2=\chi_h\phi_2\,.\]
The following proposition states that $\phi^{\mathrm{cut}}_2$ is a generalized quasimode of $\mathscr{N}_{h,\alpha}$. Its proof is the object of the following two sections.
\begin{proposition}\label{prop.quasimode2}
	We have
	\[(\mathscr{N}_{h,\alpha}-\lambda)^2\phi_2^{\mathrm{cut}}
=\mathscr{O}(h^{\frac{13}{6}})\|\phi^{\mathrm{cut}}_2\|\,.\]	
\end{proposition}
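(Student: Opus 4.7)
The plan is to adapt the approach of Proposition \ref{prop.quasimode} to the generalized eigenvalue setting by exploiting that, since $\varphi\in\ker(\mathscr{M}_{h,\alpha}-\lambda)^2$, the element $\tilde\varphi:=(\mathscr{M}_{h,\alpha}-\lambda)\varphi$ is a genuine eigenfunction of $\mathscr{M}_{h,\alpha}$ for $\lambda$. Denoting its second component by $\psi$, one has $(\mathcal{M}_{h,\alpha}-\lambda)\psi=0$, so Proposition \ref{prop.loch23} gives Agmon-type decay for both $\phi_2$ and $\psi$, Proposition \ref{prop.locs} furnishes refined $s$-localization for $\psi^{\mathrm{cut}}:=\chi_h\psi$, and Proposition \ref{prop.quasimode} applied to $\tilde\varphi$ shows that $\psi^{\mathrm{cut}}$ is itself an $\mathscr{O}(h^{3/2-3\eta})$-quasimode of $\mathscr{N}_{h,\alpha}$.

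Writing $\mathcal{M}_{h,\alpha}=\mathscr{N}_{h,\alpha}+P_h$ as in \eqref{eq.MNP}, I would expand
\[
(\mathscr{N}_{h,\alpha}-\lambda)^2\phi_2^{\mathrm{cut}}
=(\mathcal{M}_{h,\alpha}-\lambda)^2\phi_2^{\mathrm{cut}}
-(\mathcal{M}_{h,\alpha}-\lambda)P_h\phi_2^{\mathrm{cut}}
-P_h(\mathcal{M}_{h,\alpha}-\lambda)\phi_2^{\mathrm{cut}}
+P_h^2\phi_2^{\mathrm{cut}}\,.
\]
For the first term, a commutator computation in the spirit of Lemma \ref{lem.cut}, together with $(\mathcal{M}_{h,\alpha}-\lambda)\phi_2=\psi$ and $(\mathcal{M}_{h,\alpha}-\lambda)\psi=0$, yields
\[
(\mathcal{M}_{h,\alpha}-\lambda)^2\phi_2^{\mathrm{cut}}=[\mathcal{M}_{h,\alpha},\chi_h]\psi+(\mathcal{M}_{h,\alpha}-\lambda)[\mathcal{M}_{h,\alpha},\chi_h]\phi_2,
\]
with both commutators supported where $\nabla\chi_h\neq 0$, hence $\mathscr{O}(h^\infty)$ by Proposition \ref{prop.loch23} and elliptic regularity. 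For the two mixed terms, rewriting $(\mathcal{M}_{h,\alpha}-\lambda)\phi_2^{\mathrm{cut}}=\chi_h\psi+\mathscr{O}(h^\infty)$ and $P_h\chi_h=\chi_hP_h+[P_h,\chi_h]$ turns them into $P_h\psi^{\mathrm{cut}}$, $(\mathcal{M}_{h,\alpha}-\lambda)P_h\phi_2^{\mathrm{cut}}$, plus $\mathscr{O}(h^\infty)$ remainders. The residual term $P_h^2\phi_2^{\mathrm{cut}}$ is handled by direct iteration. The contribution of each summand in $P_h$ (recalled after \eqref{eq.MNP}) is evaluated by inserting the weighted bounds of Proposition \ref{prop.locs} for both $\phi_2^{\mathrm{cut}}$ and $\psi^{\mathrm{cut}}$, together with the fact that $u$ is of size $h^{2/3}$ on the Airy scale; the exponent $13/6=3/2+2/3$ reflects exactly this pairing of the quasimode loss $h^{3/2}$ with the Airy-scale gain $h^{2/3}$.

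The principal obstacle is the availability of refined localization estimates for the generalized eigenfunction $\phi_2$, namely analogues of Proposition \ref{prop.locs} in which the eigen-equation is replaced by its inhomogeneous version $(\mathcal{M}_{h,\alpha}-\lambda)\phi_2=\psi$. The plan is to rerun the induction on the total weight $\alpha_1+\alpha_2+\alpha_3$ of Proposition \ref{prop.locs}, test against the same weighted derivatives of $\phi_2^{\mathrm{cut}}$, and absorb the extra pairing with $\psi$ by means of the coercivity estimates of Lemmas \ref{lem.lbe-iaJ} and \ref{lem.confinement} combined with the already-controlled weighted seminorms of $\psi^{\mathrm{cut}}$. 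This is the Caccioppoli-type bootstrap alluded to in Section \ref{sec.org}: derivatives on a cutoff are produced from the ellipticity of $\Re(e^{-i\beta}\mathcal{M}_{h,\alpha})$ rather than from the homogeneity of the equation, which forces a more careful bookkeeping but preserves the correct exponents. Once this step is secured, the four-term expansion above collapses to the announced $\mathscr{O}(h^{13/6})\|\phi_2^{\mathrm{cut}}\|$ bound.
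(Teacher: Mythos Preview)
Your plan is essentially the paper's own proof: the same algebraic expansion of $(\mathscr{N}_{h,\alpha}-\lambda)^2$ via $\mathcal{M}_{h,\alpha}=\mathscr{N}_{h,\alpha}+P_h$, the same reduction of $(\mathcal{M}_{h,\alpha}-\lambda)^2\phi_2^{\mathrm{cut}}$ to $\mathscr{O}(h^\infty)$ by commutators with $\chi_h$, and the same Caccioppoli-type bootstrap to obtain the weighted bounds \eqref{eq.hDsopt} for the generalized eigenfunction. Two points deserve sharpening. First, Proposition~\ref{prop.loch23} does \emph{not} apply directly to $\phi_2$: it is stated for eigenfunctions, and for $\phi_2$ one must rerun the Agmon argument with the inhomogeneous right-hand side $\psi$, which produces an extra $h^{-2/3}\|\psi\|^2$ on the right (cf.\ \eqref{eq.Agmonfa}--\eqref{eq.Agmonfb}). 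Second, and more importantly, the linchpin of the whole scheme is the a~priori bound $\|\psi\|\leq Ch\|\phi_2^{\mathrm{cut}}\|$ (the content of Proposition~\ref{prop.controlf}), which you allude to but never isolate; without it the various $\mathscr{O}(h^\infty)(\|\varphi\|+\|\psi\|)$ remainders cannot be rewritten as $\mathscr{O}(h^\infty)\|\phi_2^{\mathrm{cut}}\|$, and the induction for the refined $s$-localization of $\phi_2^{\mathrm{cut}}$ does not close. Once that bound is in hand, your reading of $13/6=3/2+2/3$ as the pairing of the $s^3$ term in $P_h$ with an Airy-scale factor is exactly what the paper finds to be the worst remainder.
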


\subsubsection{Localization estimates}
The function $\phi_2^{\mathrm{cut}}$ satisfies the same localization estimates as in the previous section.  Let us explain this. We have
\[(\mathscr{M}_{h,\alpha}-\lambda)^2\varphi=0\,,\]
for $\varphi$ normalized and $(\mathscr{M}_{h,\alpha}-\lambda)\varphi=f\neq 0$. We have $(\mathscr{M}_{h,\alpha}-\lambda)f=0$ and thus $f$ satisfies the estimates of the previous section.  For instance, we have
\[f_2^{\mathrm{cut}}=\chi(h^{-\frac12+\eta}s)f_2^{\mathrm{cut}}+\mathscr{O}(h^{\infty})\|f\|\,,\]
 where $f_2^{\mathrm{cut}}$ is defined without ambiguity and satifies,  from Proposition \ref{prop.locs},
\begin{equation}\label{eq.estimatesf}
\|s^{\alpha_1}(hD_s)^{\alpha_2}(hD_u)^{\alpha_3}f_2^{\mathrm{cut}}\|\leq Ch^{\frac{\alpha_3}{3}+\frac{\alpha_1+\alpha_2}{2}}\|f_2^{\mathrm{cut}}\|\,
\end{equation}
for all $(\alpha_1,\alpha_2,\alpha_3) \in \N^3$.
Coming back to the eigenvalue equation, this implies that
\begin{equation}\label{eq.quasiairy}
(h^2D^2_u+u-z_1h^{\frac23})f_2^{\mathrm{cut}}=\mathscr{O}(h)\|f_2^{\mathrm{cut}}\|\,.
\end{equation}
We can easily adapt the proof of the Agmon estimates given in Proposition \ref{prop.loch23}  with the right-hand side $f$ to get
\begin{equation}\label{eq.Agmonfa}
	\int_{\Omega\setminus T_\delta}e^{2|x|/h^{\frac23}}|\varphi_1|^2\mathrm{d}x+\int_{B_\delta}e^{2|\Gamma(s,u)|/h^{\frac23}}|\phi_2|^2\mathrm{d}s\mathrm{d}u\leq C\|\varphi\|^2_{E_0}+Ch^{-\frac23}\|f\|^2_{E_0}\,,
\end{equation}
and
\begin{equation}\label{eq.Agmonfb}
	\int_{\Omega\setminus T_\delta}e^{2|x|/h^{\frac23}}|h\nabla\varphi_1|^2\mathrm{d}x+\int_{B_\delta}e^{2|\Gamma(s,u)|/h^{\frac23}}|h\nabla_{s,u}\phi_2|^2\mathrm{d}s\mathrm{d}u\leq Ch^{\frac23}\|\varphi\|^2_{E_0}+C\|f\|^2_{E_0}\,.
	\end{equation}
These estimates imply that
\begin{equation}\label{eq.phi2f2}
(\mathcal{M}_{h,\alpha}-\lambda)\phi_2^{\mathrm{cut}}=f_2^{\mathrm{cut}}+r_h\,,
\end{equation}
where $r_h$ has the same expression as in Lemma \ref{lem.cut} and satisfies
\[r_h=\mathscr{O}(h^\infty)(\|f\|+\|\varphi\|)\,.\]
In the following proposition we prove that, in fact, $f$ is small compared to $\phi$. This estimate is reminiscent of the famous Caccioppoli estimates
(see the original article \cite{C51}, and, for instance, the article \cite{I03} or the book \cite[Section 5.4.1]{AIM09}), since it allows us to control the derivatives of $\phi$ with $\phi$.
\begin{proposition}\label{prop.controlf}
	We have
	\begin{equation}\label{eq.controlf}
	\|f\|\leq Ch\|\varphi\|\leq \tilde Ch\|\phi_2^{\mathrm{cut}}\|\,,
		\end{equation}
	and, for all $(\alpha_1,\alpha_2) \in\mathbb{N}^2$,
	\begin{equation}\label{eq.hDshDuphi}
	\|(hD_u)^{\alpha_1}(hD_s)^{\alpha_2}\phi_2^{\mathrm{cut}}\|\leq Ch^{\frac{\alpha_1}{3}}h^{\frac{\alpha_2}{2}}\|\varphi\|\,.
	\end{equation}
	\end{proposition}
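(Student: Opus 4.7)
The strategy is a bootstrap, exploiting the fact that $f$ is itself a true eigenfunction: since $(\mathscr{M}_{h,\alpha}-\lambda)f=0$, all results already proved in Sections~\ref{sec.loestimates} and~\ref{sec.4.1} apply verbatim to $f$. In particular, Proposition~\ref{prop.loch23} gives the Agmon decay of $f$, and Propositions~\ref{prop.quasimode43}--\ref{prop.quasimode} together with Proposition~\ref{prop.locs} yield the quasimode estimate $(\mathscr{N}_{h,\alpha}-\lambda)f_2^{\mathrm{cut}}=\mathscr{O}(h^{\frac{3}{2}-3\eta})\|f_2^{\mathrm{cut}}\|$ and the refined derivative/localization bounds \eqref{eq.estimatesf}. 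Combining this quasimode quality with the resolvent bound of Proposition~\ref{prop.resolventmodel} forces a quantization $|\lambda-\mu_n(h,\alpha)|\leq Ch^{\frac{3}{2}-3\eta}$ for some $n\in\{1,\ldots,N\}$, obtained \emph{without} invoking Proposition~\ref{prop.locspec} (whose proof crucially depends on the present proposition).

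Next, one rederives the analogs of Lemmas~\ref{lem.hnabla23}--\ref{lem.hnabla243}, Proposition~\ref{prop.locs}, and Propositions~\ref{prop.quasimode43}--\ref{prop.quasimode} for $\phi_2^{\mathrm{cut}}$, starting from \eqref{eq.phi2f2}, that is $(\mathcal{M}_{h,\alpha}-\lambda)\phi_2^{\mathrm{cut}}=f_2^{\mathrm{cut}}+r_h$, with $r_h=\mathscr{O}(h^\infty)(\|f\|+\|\varphi\|)$ by the Agmon bounds \eqref{eq.Agmonfa}--\eqref{eq.Agmonfb}. The source term $f_2^{\mathrm{cut}}$ being already tightly localized, the same coercivity and far-field cutoff arguments go through; the extra contributions, of the form $\|f\|\|\varphi\|$, will be absorbed at the end. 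The outcome is the analog of Proposition~\ref{prop.quasimode},
\begin{equation*}
(\mathscr{N}_{h,\alpha}-\lambda)\phi_2^{\mathrm{cut}}=f_2^{\mathrm{cut}}+R_h^{(\phi)},\qquad \|R_h^{(\phi)}\|\leq Ch^{\frac{3}{2}-3\eta}\|\phi_2^{\mathrm{cut}}\|+C\|f\|.
\end{equation*}

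The Caccioppoli-like core is a spectral projection argument on the model operator. Let $\Pi_n$ denote the rank-one spectral projector of $\mathscr{N}_{h,\alpha}$ onto the simple eigenvalue $\mu_n(h,\alpha)$. Applying $\Pi_n$ to the displayed equation and using $\Pi_n\mathscr{N}_{h,\alpha}=\mu_n\Pi_n$, we obtain $\|\Pi_n f_2^{\mathrm{cut}}\|\leq |\lambda-\mu_n|\|\Pi_n\phi_2^{\mathrm{cut}}\|+\|\Pi_n R_h^{(\phi)}\|\leq Ch^{\frac{3}{2}-3\eta}\|\phi_2^{\mathrm{cut}}\|+C\|f\|$. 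For the complementary part, the equation $(\mathscr{N}_{h,\alpha}-\lambda)f_2^{\mathrm{cut}}=R_h^{(f)}$, together with the reduced resolvent bound $\|(I-\Pi_n)(\mathscr{N}_{h,\alpha}-\lambda)^{-1}(I-\Pi_n)\|\leq Ch^{-1}$ (which follows from the tensorial structure of $\mathscr{N}_{h,\alpha}$ used in the proof of Proposition~\ref{prop.resolventmodel}: the distance from $\mu_n$ to the remaining eigenvalues is $\gtrsim h$), gives $\|(I-\Pi_n)f_2^{\mathrm{cut}}\|\leq Ch^{\frac{1}{2}-3\eta}\|f_2^{\mathrm{cut}}\|$, which is $\ll 1$. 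Summing and absorbing the self-referential small term yields $\|f_2^{\mathrm{cut}}\|\leq Ch^{\frac{3}{2}-3\eta}\|\phi_2^{\mathrm{cut}}\|$, hence, via the Agmon comparisons \eqref{eq.Agmonfa}--\eqref{eq.Agmonfb}, $\|f\|\leq Ch\|\varphi\|\leq \tilde Ch\|\phi_2^{\mathrm{cut}}\|$.

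Finally, the derivative estimates \eqref{eq.hDshDuphi} follow by re-running the iterative testing scheme of Proposition~\ref{prop.locs} on $(\mathcal{M}_{h,\alpha}-\lambda)\phi_2^{\mathrm{cut}}=f_2^{\mathrm{cut}}+r_h$ against suitable powers of $hD_u$ and $hD_s$: now that $\|f\|\leq Ch\|\varphi\|$ is in hand, the extra $\|f\|\|\varphi\|$-terms in the elliptic estimates are negligible compared to the $h^{2/3}\|\varphi\|^2$ main terms, and the induction closes. The principal technical obstacle is the spectral step above: one must carefully control the reduced resolvent on $\mathrm{Ran}(I-\Pi_n)$ using the tensorial decomposition of the model, and cleanly extract the quantization $|\lambda-\mu_n|\leq Ch^{\frac{3}{2}-3\eta}$ directly from the quasimode bound for $f$ to avoid any circular dependence on Proposition~\ref{prop.locspec}.
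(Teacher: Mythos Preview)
Your route is genuinely different from the paper's. The paper never invokes the model spectral projector $\Pi_n$ nor the quantization $|\lambda-\mu_n|=\mathscr{O}(h^{3/2-3\eta})$; it proceeds by a purely elliptic (Caccioppoli-type) argument. Concretely, the authors test \eqref{eq.phi2f2} successively against $\phi_2^{\mathrm{cut}}$, $hD_s\phi_2^{\mathrm{cut}}$ and $\mathscr{A}_h\phi_2^{\mathrm{cut}}$ (where $\mathscr{A}_h=h^2D_u^2+u-z_1h^{2/3}$), obtaining control of $\|\mathscr{A}_h\phi_2^{\mathrm{cut}}\|^2+\|(hD_s)^2\phi_2^{\mathrm{cut}}\|^2$ by $Ch^2\|\phi_2^{\mathrm{cut}}\|^2+Ch\|f_2^{\mathrm{cut}}\|\|\phi_2^{\mathrm{cut}}\|$. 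The closing trick is then to re-express $f_2^{\mathrm{cut}}$ via \eqref{eq.phi2f2} itself as $(\mathcal{M}_{h,\alpha}-\lambda)\phi_2^{\mathrm{cut}}-r_h$, i.e.\ as a combination of $\mathscr{A}_h\phi_2^{\mathrm{cut}}$, $(hD_s)^2\phi_2^{\mathrm{cut}}$ and lower-order terms, and absorb these by Young's inequality. This yields $\|f\|\leq Ch\|\phi_2^{\mathrm{cut}}\|$ directly, without any spectral input. Your spectral-projection argument is more structural and, if it closes, actually gives the sharper bound $\|f\|\leq Ch^{3/2-3\eta}\|\phi_2^{\mathrm{cut}}\|$; the paper's energy method is more self-contained and does not require the uniform-in-$h$ boundedness of $\Pi_n$ or of the reduced resolvent of the complex harmonic oscillator.

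There is, however, a gap in your closing step as written. Summing your two pieces gives
\[
\|f_2^{\mathrm{cut}}\|\leq Ch^{3/2-3\eta}\|\phi_2^{\mathrm{cut}}\|+C\|f\|+Ch^{1/2-3\eta}\|f_2^{\mathrm{cut}}\|\,,
\]
and while the last term is absorbable, the middle term $C\|f\|\sim C\|f_2^{\mathrm{cut}}\|$ is \emph{not}, because the constant $C$ coming from $\|\Pi_n\|\cdot\|R_h^{(\phi)}\|$ has no reason to be smaller than~$1$. To repair this you must show that the $\|f\|$-coefficient in your bound on $R_h^{(\phi)}=-P_h\phi_2^{\mathrm{cut}}+r_h$ is in fact $o(1)$. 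This is true but requires an extra argument: first establish the $h^{1/2}$-localization in~$s$ for $\phi_2^{\mathrm{cut}}$ (the analog of \eqref{eq.locs}) with errors $\mathscr{O}(h^\infty)(\|\phi_2^{\mathrm{cut}}\|+\|f\|)$, which works because $\chi_{\mathrm{far},h}f_2^{\mathrm{cut}}=\mathscr{O}(h^\infty)\|f\|$ by Proposition~\ref{prop.locs} applied to~$f$; this alone gives $\|s^3\phi_2^{\mathrm{cut}}\|\leq Ch^{3/2-3\eta}\|\phi_2^{\mathrm{cut}}\|+\mathscr{O}(h^\infty)\|f\|$. For the derivative terms in $P_h$ (e.g.\ $u(hD_s)^2\phi_2^{\mathrm{cut}}$), the source contributes through $\langle f_2^{\mathrm{cut}},(hD_s)^2\phi_2^{\mathrm{cut}}\rangle=\langle (hD_s)^2f_2^{\mathrm{cut}},\phi_2^{\mathrm{cut}}\rangle=\mathscr{O}(h)\|f\|\|\phi_2^{\mathrm{cut}}\|$ by \eqref{eq.estimatesf}, and Young with a free parameter turns this into $\epsilon\|f\|+C_\epsilon h^{\ldots}\|\phi_2^{\mathrm{cut}}\|$. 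With these refinements the absorption closes; without them your argument is circular.
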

\begin{proof}
Let us start by noticing that, from \eqref{eq.phi2f2}, we have
\[\Re e^{-i\alpha/2}\langle (\mathcal{M}_{h,\alpha}-\lambda)\phi_2^{\mathrm{cut}},\phi_2^{\mathrm{cut}}\rangle\leq \|f_2^{\mathrm{cut}}\|\|\phi_2^{\mathrm{cut}}\|+\mathscr{O}(h^\infty)(\|f\|+\|\varphi\|)\,,\]
and thus, with \eqref{eq.MNP} and localization estimates, we get
\begin{multline}\label{eq.f-alpha=1}
\cos\left(\frac{\alpha}{6}\right)q_{\mathrm{Ai},h}(\phi_2^{\mathrm{cut}})+c\|hD_s\phi_2^{\mathrm{cut}}\|^2+c\|s\phi_2^{\mathrm{cut}}\|^2\leq Ch\|\phi_2^{\mathrm{cut}}\|^2+ \|f_2^{\mathrm{cut}}\|\|\phi_2^{\mathrm{cut}}\|\\
+\mathscr{O}(h^\infty)(\|f\|+\|\varphi\|)\,,
\end{multline}
where, for all $\Psi\in B_0^1(\R_+):=\{\Psi\in H^1_0(\R_+) : \sqrt{u}\Psi\in L^2(\R_+) \}$,
\[q_{\mathrm{Ai},h}(\Psi)=\|hD_u\Psi\|^2+\int_{\mathbb{R}_+\times\mathbb{R}}(u-z_1 h^{\frac23})|\Psi|^2\mathrm{d}u\mathrm{d}s\geq 0\,.\]
We will denote the corresponding operator by $\mathscr{A}_h$ and we observe that $\phi_2^{\mathrm {cut}} $ and $f_2^{\mathrm {cut}}$ belong to its domain.

At this stage, we still have to control $f$.
By using \eqref{eq.phi2f2}, an integration by parts and \eqref{eq.estimatesf}, we have
 \begin{equation}
\begin{split}
 \langle hD_s(\mathcal{M}_{h,\alpha}-\lambda)\phi_2^{\mathrm{cut}},hD_s\phi_2^{\mathrm{cut}}\rangle
& =  \langle (hD_s)^2 f_2^{\mathrm{cut}}, \phi_2^{\mathrm{cut}}\rangle
+ \langle r_h,(hD_s)^2 \phi_2^{\mathrm{cut}}\rangle \\
& \leq
Ch\|f_2^{\mathrm{cut}}\|\|\phi^{\mathrm{cut}}_2\| + \mathscr{O}(h^\infty)\|(hD_s)^2\phi_2^{\mathrm{cut}}\|(\|f\|+\|\varphi\|)\,.
\end{split}
\end{equation}
Multiplying by $e^{-i\alpha/2}$, taking the real part and estimating commutators give
\begin{multline}\label{eq.f-alpha=2b}
\|(hD_s)^2\phi_2^{\mathrm{cut}}\|^2+q_{\mathrm{Ai},h}(hD_s\phi_2^{\mathrm{cut}})\\
\leq Ch\|hD_s\phi_2^{\mathrm{cut}}\|^2	+Ch\|f_2^{\mathrm{cut}}\|\|\phi^{\mathrm{cut}}_2\|
+Ch\|(hD_s)^2\phi_2^{\mathrm{cut}}\|\|hD_s\phi_2^{\mathrm {cut}}\|\\
+\mathscr{O}(h^\infty)\|(hD_s)^2\phi_2^{\mathrm{cut}}\|(\|f\|+\|\varphi\|)\,,
\end{multline}
where we note that $hD_s \phi_2^{\mathrm {cut}}$ is also in $B^1_0(\R_+)$.

Proceeding in the same way, we find
\begin{multline*}
\Re \left( e^{-i\alpha/2}\langle (\mathcal{M}_{h,\alpha}-\lambda)\phi_2^{\mathrm{cut}},\mathscr{A}_h\phi_2^{\mathrm{cut}}\rangle \right)
\leq \mathscr{O}(h^\infty)\|\mathscr{A}_h\phi_2^{\mathrm{cut}}\|(\|f\|+\|\varphi\|)+C\|\mathscr{A}_h f_2^{\mathrm{cut}}\|\|\phi^{\mathrm{cut}}_2\|\,,
\end{multline*}
and thus, with \eqref{eq.quasiairy},
\begin{multline*}
\Re \left( e^{-i\alpha/2}\langle (\mathcal{M}_{h,\alpha}-\lambda)\phi_2^{\mathrm{cut}},\mathscr{A}_h\phi_2^{\mathrm{cut}}\rangle \right)
\leq \mathscr{O}(h^\infty)\|\mathscr{A}_h\phi_2^{\mathrm{cut}}\|(\|f\|+\|\varphi\|)+Ch\|f_2^{\mathrm{cut}}\|\|\phi^{\mathrm{cut}}_2\|\,.
\end{multline*}
Therefore, estimating similarly commutators,
\begin{equation*}
\begin{split}
q_{\mathrm{Ai},h}(hD_s\phi_2^{\mathrm{cut}})+\|\mathscr{A}_h\phi_2^{\mathrm{cut}}\|^2\leq Chq_{\mathrm{Ai},h}(\phi_2^{\mathrm{cut}})
+\mathscr{O}(h^\infty)\|\mathscr{A}_h\phi_2^{\mathrm{cut}}\|(\|f\|+\|\varphi\|)\\
	+Ch\|f_2^{\mathrm{cut}}\|\|\phi^{\mathrm{cut}}_2\|
+Ch^{\frac43-2\eta}\|hD_s\phi_2^{\mathrm{cut}}\|^2
+Ch\|hD_s\phi^{\mathrm{cut}}_2\|\|hD_u(hD_s\phi_2^{\mathrm{cut}})\|\,.
\end{split}
\end{equation*}
Note that
 \[\|hD_u(hD_s\phi_2^{\mathrm{cut}})^2\|\leq  q_{\mathrm{Ai},h}(hD_s\phi_2^{\mathrm{cut}}) +Ch^{\frac23}\|hD_s\phi_2^{\mathrm{cut}}\|^2\,,\]
so that, using the definition of $ q_{\mathrm{Ai},h}$, we get
\begin{multline}\label{eq.hDsA12}
	 q_{\mathrm{Ai},h}(hD_s\phi_2^{\mathrm{cut}})  +\|\mathscr{A}_h\phi_2^{\mathrm{cut}}\|^2\leq Ch  q_{\mathrm{Ai},h}(\phi_2^{\mathrm{cut}})
+\mathscr{O}(h^\infty)\|\mathscr{A}_h\phi_2^{\mathrm{cut}}\|(\|f\|+\|\varphi\|)\\
	+Ch\|f_2^{\mathrm{cut}}\|\|\phi^{\mathrm{cut}}_2\|
+Ch^{\frac43-2\eta}\|hD_s\phi_2^{\mathrm{cut}}\|^2\,.
\end{multline}
Thus, with \eqref{eq.f-alpha=2b}, \eqref{eq.hDsA12} and the Young inequality, we get, for some constant $c>0$,
\begin{multline*}
c\|\mathscr{A}_h\phi_2^{\mathrm{cut}}\|^2+c\|(hD_s)^2\phi_2^{\mathrm{cut}}\|^2+q_{\mathrm{Ai},h}(hD_s\phi_2^{\mathrm{cut}})\\
\leq Ch^{2}\|\phi_2^{\mathrm{cut}}\|^2+\mathscr{O}(h^\infty)(\|f\|^2+\|\varphi\|^2)+Ch\|f_2^{\mathrm{cut}}\|\|\phi^{\mathrm{cut}}_2\|\,.
\end{multline*}
Recalling \eqref{eq.phi2f2} to bound the right-hand side, we deduce that
\begin{equation}\label{eq.f-alpha=2}
c\|\mathscr{A}_h\phi_2^{\mathrm{cut}}\|^2+c\|(hD_s)^2\phi_2^{\mathrm{cut}}\|^2
+q_{\mathrm{Ai},h}(hD_s\phi_2^{\mathrm{cut}})
\leq Ch^{2}\|\phi_2^{\mathrm{cut}}\|^2\,.
\end{equation}
Again with \eqref{eq.phi2f2}, this implies that
\[\|f\|\leq C\|f_2^{\mathrm{cut}}\|\leq Ch\|\phi_2^{\mathrm{cut}}\| \,,\]
which gives \eqref{eq.controlf}.

With \eqref{eq.f-alpha=1}, \eqref{eq.f-alpha=2b}, and \eqref{eq.f-alpha=2}, we get \eqref{eq.hDshDuphi} for $|(\alpha_1,\alpha_2)|\in\{0,1,2\}$. The control of the higher powers can be obtained by induction and similar estimates.
\end{proof}

\begin{proposition}[Localisation in $s$]\label{prop.locoptimal}
	Consider $\chi_0\in\mathscr{C}^\infty_0(\mathbb{R})$ equal to $1$ in a neighborhood of $0$ and $\eta>0$. Then, in $H^1$-norm, we have
	\begin{equation}\label{eq.locs2bis}
		\phi_2^{\mathrm{cut}}=\chi_0(h^{-\frac12+\eta}s)\phi_2^{\mathrm{cut}}+\mathscr{O}(h^{\infty})\,.
	\end{equation}
	Moreover, for all $(\alpha_1,\alpha_2,\alpha_3)\in\mathbb{N}^3$,
	\begin{equation}\label{eq.hDsoptbis}
		\|s^{\alpha_1}(hD_s)^{\alpha_2}(hD_u)^{\alpha_3}\phi_2^{\mathrm{cut}}\|\leq Ch^{\frac{\alpha_3}{3}+\frac{\alpha_1+\alpha_2}{2}}\|\phi_2^{\mathrm{cut}}\|\,.	
	\end{equation}
	\end{proposition}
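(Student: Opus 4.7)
The plan is to mirror the proof of Proposition \ref{prop.locs}, with the eigenvalue equation replaced by the Jordan identity \eqref{eq.phi2f2}, namely $(\mathcal{M}_{h,\alpha}-\lambda)\phi_2^{\mathrm{cut}} = f_2^{\mathrm{cut}} + r_h$. The extra inhomogeneous term $f_2^{\mathrm{cut}}$ is harmless because $f$ itself is a genuine eigenfunction of $\mathscr{M}_{h,\alpha}$, so Proposition \ref{prop.locs} applies to it, and Proposition \ref{prop.controlf} supplies the crucial \emph{a priori} bound $\|f\| \leq Ch\|\phi_2^{\mathrm{cut}}\|$.

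To obtain \eqref{eq.locs2bis}, I take $\chi_{\mathrm{far},h}(s)=\chi_{\mathrm{far}}(h^{-\frac12+\eta}s)$ as in the proof of Proposition \ref{prop.locs}, write
\[
(\mathcal{N}_{h,\alpha}-\lambda)\phi_2^{\mathrm{cut}} = (\mathcal{N}_{h,\alpha}-\mathcal{M}_{h,\alpha})\phi_2^{\mathrm{cut}} + f_2^{\mathrm{cut}} + r_h,
\]
test against $\chi_{\mathrm{far},h}^2\phi_2^{\mathrm{cut}}$, and take $\Re e^{-i\alpha/2}$ of the resulting pairing. On the left, the Airy coercivity \eqref{eq.minoairy} combined with $\gamma_1(s)\geq ch^{1-2\eta}$ on the support of $\chi_{\mathrm{far},h}$ extracts the dominant lower bound $ch^{1-2\eta}\|\chi_{\mathrm{far},h}\phi_2^{\mathrm{cut}}\|^2 + c\|hD_s(\chi_{\mathrm{far},h}\phi_2^{\mathrm{cut}})\|^2$. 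On the right, the only new ingredient is the pairing $\langle\chi_{\mathrm{far},h}^2 f_2^{\mathrm{cut}}, \phi_2^{\mathrm{cut}}\rangle$; but $f$ already satisfies \eqref{eq.locs}, so $\chi_{\mathrm{far},h}f_2^{\mathrm{cut}} = \mathscr{O}(h^\infty)\|f\|$, which by \eqref{eq.controlf} is $\mathscr{O}(h^\infty)\|\phi_2^{\mathrm{cut}}\|$. All remaining error terms, i.e.\ the commutator $[\mathcal{N}_{h,\alpha},\chi_{\mathrm{far},h}]$, the remainder $r_h$, and the difference $(\mathcal{N}_{h,\alpha}-\mathcal{M}_{h,\alpha})\phi_2^{\mathrm{cut}}$ (containing $u\,\mathscr{O}(s^2)$ and $(1-m_\theta^{-2})(hD_s)^2$), are controlled exactly as in the proof of Proposition \ref{prop.locs}. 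A bootstrap over negative powers of $h$ then upgrades the decay on $\chi_{\mathrm{far},h}\phi_2^{\mathrm{cut}}$ to any polynomial order, yielding \eqref{eq.locs2bis} in $H^1$-norm.

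For \eqref{eq.hDsoptbis}, I proceed by induction on $|(\alpha_1,\alpha_2,\alpha_3)|$. The case $\alpha_1=0$ is precisely the content of \eqref{eq.hDshDuphi} in Proposition \ref{prop.controlf}, using that the Agmon bounds \eqref{eq.Agmonfa}--\eqref{eq.Agmonfb} make $\|\varphi\|$ and $\|\phi_2^{\mathrm{cut}}\|$ equivalent up to $\mathscr{O}(h^\infty)$. To introduce positive powers of $s$, I test $(\mathcal{M}_{h,\alpha}-\lambda)\phi_2^{\mathrm{cut}} = f_2^{\mathrm{cut}}+r_h$ against a symmetrised weight of the form $s^{2\alpha_1}(hD_s)^{2\alpha_2}(hD_u)^{2\alpha_3}\phi_2^{\mathrm{cut}}$, multiply by $e^{-i\alpha/2}$, and extract the coercive part exactly as in the proof of Proposition \ref{prop.locs}. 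The $f_2^{\mathrm{cut}}$ contribution is absorbed via \eqref{eq.controlf} combined with the already-known estimates \eqref{eq.estimatesf} on $f_2^{\mathrm{cut}}$, while the commutator remainders feed into strictly lower-order cases of the induction.

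The main obstacle is purely bookkeeping: the commutators between $s^{\alpha_1}$, $(hD_s)^{\alpha_2}$, $(hD_u)^{\alpha_3}$ and $\mathcal{M}_{h,\alpha}$ generate a proliferation of lower-order mixed-derivative terms, each of which must be reabsorbed through the Young inequality using the inductive hypothesis and the tight control of $R_h$, $r_h$, and $\mathcal{N}_{h,\alpha}-\mathcal{M}_{h,\alpha}$. Once the induction is set up carefully, no new phenomenon appears beyond those already handled in Proposition \ref{prop.locs} and Proposition \ref{prop.controlf}.
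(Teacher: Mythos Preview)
Your proposal is correct and follows essentially the same route as the paper's own proof: both handle \eqref{eq.locs2bis} by repeating the argument of Proposition~\ref{prop.locs} with the extra inhomogeneous term $\langle \chi_{\mathrm{far},h}f_2^{\mathrm{cut}},\chi_{\mathrm{far},h}\phi_2^{\mathrm{cut}}\rangle$, dispatched via Proposition~\ref{prop.locs} applied to the eigenfunction $f$ together with \eqref{eq.controlf}; and both obtain \eqref{eq.hDsoptbis} by seeding the induction with the purely derivative estimates \eqref{eq.hDshDuphi} from Proposition~\ref{prop.controlf} (and \eqref{eq.f-alpha=1} for the $s$-weight), then iterating. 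The only difference is cosmetic bookkeeping in how the induction is organised.
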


\begin{proof}
The proof is similar to that of Proposition \ref{prop.locs} since we can write
\begin{multline*}\langle (\mathcal{N}_{h,\alpha}-\lambda)(\chi_{\mathrm{far},h}\phi_{2}^{\mathrm{cut}}),\chi_{\mathrm{far},h}\phi_{2}^{\mathrm{cut}}\rangle=\langle \chi_{\mathrm{far},h}f_2^{\mathrm{cut}},\chi_{\mathrm{far},h}\phi_2^{\mathrm{cut}}\rangle\\
+\langle R_h,\chi_{\mathrm{far},h}\phi_{2}^{\mathrm{cut}}\rangle+\langle[\mathcal{N}_{h,\alpha},\chi_{\mathrm{far},h}]\phi_{2}^{\mathrm{cut}},\chi_{\mathrm{far},h}\phi_{2}^{\mathrm{cut}}\rangle\,.
\end{multline*}
The only new term satisfies
\[\langle \chi_{\mathrm{far},h}f_2^{\mathrm{cut}},\chi_{\mathrm{far},h}\phi_2^{\mathrm{cut}}\rangle=\mathscr{O}(h^\infty)\|\phi_2^{\mathrm{cut}}\|^2\,,\]
since we can apply Proposition \ref{prop.locs} to the eigenfunction $f$ and use \eqref{eq.controlf}. Following the same lines as in the proof of Proposition \ref{prop.locs}, we get \eqref{eq.locs2bis}.

Let  us explain \eqref{eq.hDsoptbis}. Let us consider the case $|(\alpha_1,\alpha_2,\alpha_3)|=1$. When $\alpha_3=1$ or $\alpha_2=1$, the estimate comes from Proposition \ref{prop.controlf}. Then, we recall \eqref{eq.f-alpha=1}  and \eqref{eq.controlf} and we get \eqref{eq.hDsoptbis} with $\alpha=(1,0,0)$. For $|(\alpha_1,\alpha_2,\alpha_3)|\geq 2$, the result follows by induction.
	\end{proof}

\subsubsection{Proof of Proposition \ref{prop.quasimode2}}
We have
\[(\mathcal{M}_{h,\alpha}-\lambda)\varphi=f\,,\quad\mbox{ with }\quad (\mathcal{M}_{h,\alpha}-\lambda)f=0\,.\]
Then,  from Proposition \ref{prop.controlf} and \eqref{eq.phi2f2}, we get that
\[(\mathscr{M}_{h,\alpha}-\lambda)f_2^{\mathrm{cut}}=\mathscr{O}(h^\infty)\,,\quad (\mathscr{M}_{h,\alpha}-\lambda)\phi_2^{\mathrm{cut}}=f_2^{\mathrm{cut}}+\mathscr{O}(h^\infty)\,. \]
Thus,
\[(\mathscr{M}_{h,\alpha}-\lambda)^2\phi^{\mathrm{cut}}_2=\mathscr{O}(h^\infty)\,.\]
Recalling \eqref{eq.MNP} we get
	\[(\mathscr{N}_{h,\alpha}-\lambda)^2\phi_2^{\mathrm{cut}}
=-P_h(\mathscr{N}_{h,\alpha}-\lambda)\phi_2^{\mathrm{cut}}
-(\mathscr{N}_{h,\alpha}-\lambda)P_h\phi_2^{\mathrm{cut}}-P^2_h\phi_2^{\mathrm{cut}}
+\mathscr{O}(h^\infty)\,.\]
By means of Proposition \ref{prop.locoptimal}, we deduce Proposition \ref{prop.quasimode2}. Let us explain this. Among the terms on the right-hand side (coming from the definition of $P_h$), we have  to estimate the following
\begin{equation*}
	\begin{split}
\|r_{h,4}s^3(\mathscr{N}_{h,\alpha}-\lambda)\phi_2^{\mathrm{cut}}\|\leq& C|\lambda|\|s^3\phi_2^{\mathrm{cut}}\|+C\|s^3(hD_u)^2\phi_2^{\mathrm{cut}}\|
+C\|s^3(hD_s)^2\phi_2^{\mathrm{cut}}\|\\
&+C\|hs^3u hD_s\phi_2^{\mathrm{cut}}\|+C\|s^3u\phi_2^{\mathrm{cut}}\|+C\|s^5\phi_2^{\mathrm{cut}}\|\\
\leq & Ch^{\frac{13}{6}}\,.
\end{split}
\end{equation*}
All the other terms can be analyzed in the same way. It appears that the order of magnitude $h^{\frac{13}{6}}$ is the biggest one among all powers of $h$ appearing in the remainders.  This completes the proof of Proposition \ref{prop.quasimode2}.

\subsection{Proof of Propositons \ref{prop.locspec} \& \ref{prop.finale}}\label{sec.4.3}

\subsubsection{Proof of Proposition \ref{prop.locspec}}\label{sec.Riesz}
 Let us first consider $\varphi =(\varphi_1,\phi_2)$ an eigenfunction of $\mathscr{M}_{h,\alpha}$.
From Proposition \ref{prop.quasimode}, we have
\[\|(\mathscr{N}_{h,\alpha}-\lambda)\phi_2^{\mathrm{cut}}\|\leq \tilde Ch^{\frac32-\eta}\|\phi_2^{\mathrm{cut}}\|\,.\]
Then, we use the resolvent estimate of Proposition \ref{prop.resolventmodel}. We write $\lambda=z_1e^{\frac{2i\alpha}{3}}h^{\frac23}+\zeta h$, with $\zeta\in D(0,R)$. If $\zeta$ does not belong to the spectrum of our complex harmonic oscillator, then, we have
\[1\leq \tilde C h^{\frac32-\eta}\left(Ch^{-\frac23}+\frac{C}{\mathrm{dist}(\mathrm{sp}(h^2D_s^2+e^{i\alpha}\frac{k_0 s^2}{2}),\zeta h)}\right)\,.\]
We deduce that
\[\mathrm{dist}\left(\zeta,\{(2n-1)e^{\frac{i\alpha}{2}}\sqrt{\frac{k_0}{2}}\,,1\leq n\leq N\}\right)\leq Ch^{\frac12-\eta}\,,\]
which implies \eqref{eq.locspec}.

Let us now discuss the rank of the Riesz projector $\Pi_{n,h}$. From the estimate \eqref{eq.locspec}, we can draw the circle $\mathscr{C}_{n,h}$ with center $\mu_n(h,\alpha)$ and radius $h^{\frac32-3\eta}$ in the resolvent set of $\mathscr{M}_{h,\alpha}$.  Let us assume that the rank of the projector is at least two. There are two possibilities. Either there are two distinct (possibly not simple) eigenvalues (which coincide with $\mu_n(h,\alpha)$ modulo $\mathscr{O}(h^{\frac32-\eta})$), or there is an eigenvalue with algebraic multiplicity at least two. The strategy is to evaluate the Riesz projector on the corresponding (possibly generalized) eigenfunctions
\[
\hat\Pi_{n,h} = \frac{1}{2i\pi}\int_{\mathscr{C}_{n,h}}(z-\mathscr{N}_{h,\alpha})^{-1}
\mathrm{d}z\,,
\]
whose rank is one by Proposition \ref{prop.resolventmodel}.

Consider the first case and $\psi$ and $\tilde\psi$ corresponding normalized eigenfunctions. Let us denote $F_h=\mathrm{span}(\psi,\tilde\psi)$, which is of dimension two. Then, the map $Q_h : F_h\ni f\mapsto\chi_h f_2$ is injective. Indeed, from the Agmon estimates satisfied by the eigenfunctions, we see that
\[\|\chi_h f_2\|=\|f\|+\mathscr{O}(h^\infty)\|f\|\,,\]
and, in particular, for $h$ small enough,
\[\|f\|\leq  2\|Q_h f\|\,.\]
Moreover, we have, for all $f\in F_h$,
\[\|(\mathscr{N}_{\alpha,h}-\lambda) Q_h   f\|\leq \tilde Ch^{\frac32-\eta}\| Q_h  f\|\,,\quad \lambda=\mu_n(h,\alpha)\,.\]
We notice that
\[\hat\Pi_{n,h} Q_h  f= Q_h   f+\frac{1}{2i\pi}\int_{\mathscr{C}_{n,h}}(z-\lambda)^{-1}(z-\mathscr{N}_{h,\alpha})^{-1}
\left(\mathscr{N}_{h,\alpha}-\lambda\right) Q_h   f\mathrm{d}z\,.\]
Thus,
\[\|\hat\Pi_{n,h} Q_h  f- Q_h   f\|\leq Ch^{\eta}\| Q_h  f\|\leq\frac12\| Q_h  f\|\,.\]
This shows that $\mathrm{rank}\,\hat\Pi_{n,h}=2$, which is a contradiction.

Let us now consider the second case of an eigenvalue with algebraic multiplicity at least two. This implies the existence of   $\varphi = (\varphi_1,\phi_2) \Bk\in\ker(\mathscr{M}_{h,\alpha}-\lambda)^2$ such that $\varphi\notin\ker(\mathscr{M}_{h,\alpha}-\lambda)$.

Then, we write
\[\hat\Pi_{n,h}\phi_2^{\mathrm{cut}}=\phi_2^{\mathrm{cut}}
+\frac{1}{2i\pi}\int_{\mathscr{C}_{n,h}}(z-\lambda)^{-2}(z-\mathscr{N}_{h,\alpha})^{-1}
(\mathscr{N}_{h,\alpha}-\lambda)^2\phi_2^{\mathrm{cut}}\mathrm{d}z\,.\]
Combining Propositions \ref{prop.quasimode2} and \ref{prop.resolventmodel}, we get
\[\|\hat\Pi_{n,h}\phi_2^{\mathrm{cut}}-\phi_2^{\mathrm{cut}}\|=o(1)\|\phi_2^{\mathrm{cut}}\|\,,\]
 where we used the fact that $13/6>2$. We conclude that the range of $\hat\Pi_{n,h}$ has dimension at least two. This is a contradiction with Proposition \ref{prop.resolventmodel}. \Bk

\subsubsection{Proof of Proposition \ref{prop.finale}}\label{sec.quaRiesz}
 Considering the result of Proposition \ref{prop.locspec}, it is sufficient to show that for each fixed $n \in \{1, \cdots N\}$, the Riesz projector $\Pi_{n,h}$ is not zero.
For this consider the function
\[\psi_h(x)=(0,\chi(s,u)\Psi_{1,n,h}(s,u))\,,\]
where $\Psi_{1,n,h}$ is defined in \eqref{eq.Psimn} and $\chi$ is a smooth function with compact support equal to $1$ near $0$ and equal to $0$ outside a small neighborhood of $(0,0)$.

Then, we consider
\[\Pi_{n,h}\psi_h=\frac{1}{2i\pi}\int_{\mathscr{C}_{n,h}}(z-\mathscr{M}_{h,\alpha})^{-1}\psi_h\mathrm{d}z=\frac{1}{2i\pi}\int_{\widetilde{\mathscr{C}}_{n,h}}(z-\mathscr{M}_{h,\alpha})^{-1}\psi_h\mathrm{d}z\,,
\]
where $\widetilde{\mathscr{C}}_{n,h}$ is a circle with the same center as $\mathscr{C}_{n,h}$, but with radius of order $\epsilon h$ for $\epsilon$ small enough.
Given $z\in\widetilde{\mathscr{C}}_{n,h}$, we consider $\varphi_{h,z} = (\varphi_{h,z,1}, \phi_{h,z,2})$ the unique solution of
\[(z-\mathscr{M}_{h,\alpha})\varphi_{h,z}=\psi_h\,.\]
Then, $\varphi_{h,z}$ satisfies the Agmon estimates with a right-hand side \eqref{eq.Agmonfa} and \eqref{eq.Agmonfb}. In particular, we have, in $H^1$-norm,
\begin{equation}\label{eq.phihz1}
\varphi_{h,z,1}=\mathscr{O}(h^\infty)(\|\varphi_{h,z}\|+\|\psi_h\|)
\end{equation}
and, with similar notations as in \eqref{eq.cut},
\begin{equation}\label{eq.phihz2}
	\phi_{h,z,2}=\phi_{h,z,2}^{\mathrm{cut}}+\mathscr{O}(h^\infty)(\|\varphi_{h,z}\|+\|\psi_h\|)\,.
\end{equation}
One needs to estimate $\|\varphi_{h,z}\|$. To do so, let us consider $\phi_{h,z,2}^{\mathrm{cut}}$, which satisfies
\[(\mathscr{M}_{h,\alpha}-z)\phi^{\mathrm{cut}}_{h,z,2}=-\psi^{\mathrm{cut}}_{h,2}+\mathscr{O}(h^\infty)(\|\varphi_{h,z}\|+\|\psi_h\|)\,.\]
As in (the beginning of) the proof of Proposition \ref{prop.controlf}, we get the following.
\begin{lemma}\label{lem.fina}
	We have
\[\|hD_u\phi_{h,z,2}^{\mathrm{cut}}\|^2\leq Ch^{\frac23}\|\phi_{h,z,2}^{\mathrm{cut}}\|^2+\|\psi_h\|\|\phi^{\mathrm{cut}}_{h,z,2}\|+\mathscr{O}(h^\infty)(\|\varphi_{h,z}\|^2+\|\psi_h\|^2)\,,\]
and
\[\|hD_s\phi_{h,z,2}^{\mathrm{cut}}\|^2+\|s\phi_{h,z,2}^{\mathrm{cut}}\|^2\leq Ch\|\phi_{h,z,2}^{\mathrm{cut}}\|^2+C\|\psi_h\|\|\phi^{\mathrm{cut}}_{h,z,2}\|+\mathscr{O}(h^\infty)(\|\varphi_{h,z}\|^2+\|\psi_h\|^2)\,.\]
\end{lemma}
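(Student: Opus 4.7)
The plan is to imitate the opening steps of the proof of Proposition \ref{prop.controlf}, with the source term $-\psi^{\mathrm{cut}}_{h,2}$ playing the role previously occupied by $f_2^{\mathrm{cut}}$. Starting from the equation written just above the lemma,
\[(\mathscr{M}_{h,\alpha}-z)\phi^{\mathrm{cut}}_{h,z,2}=-\psi^{\mathrm{cut}}_{h,2}+\mathscr{O}(h^\infty)(\|\varphi_{h,z}\|+\|\psi_h\|)\,,\]
I would pair with $\phi^{\mathrm{cut}}_{h,z,2}$ in $L^2(B_\delta)$, multiply by $e^{-i\alpha/2}$, and take real parts. Cauchy--Schwarz on the $\psi^{\mathrm{cut}}_{h,2}$ contribution and Young's inequality on the $\mathscr{O}(h^\infty)$ remainder yield
\[\Re\langle e^{-i\alpha/2}(\mathcal{M}_{h,\alpha}-z)\phi^{\mathrm{cut}}_{h,z,2},\phi^{\mathrm{cut}}_{h,z,2}\rangle\leq \|\psi_h\|\|\phi^{\mathrm{cut}}_{h,z,2}\|+\mathscr{O}(h^\infty)(\|\varphi_{h,z}\|^2+\|\psi_h\|^2)\,.\]

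The heart of the argument is to exhibit coercivity on the left-hand side. I would use the decomposition $\mathcal{M}_{h,\alpha}=\mathscr{N}_{h,\alpha}+P_h$ from \eqref{eq.MNP} and write $z=z_1h^{2/3}e^{2i\alpha/3}+\zeta h$ with $|\zeta|$ small on $\widetilde{\mathscr{C}}_{n,h}$. After rotation by $e^{-i\alpha/2}$, the operator $\mathscr{N}_{h,\alpha}-z_1 h^{2/3}e^{2i\alpha/3}$ contributes the three positive pieces
\[\cos\left(\frac{\alpha}{6}\right)q_{\mathrm{Ai},h}(\phi^{\mathrm{cut}}_{h,z,2})+\cos\left(\frac{\alpha}{2}\right)\|hD_s\phi^{\mathrm{cut}}_{h,z,2}\|^2+\cos\left(\frac{\alpha}{2}\right)\frac{\kappa_0}{2}\|s\phi^{\mathrm{cut}}_{h,z,2}\|^2\,,\]
all of whose prefactors are bounded from below by a positive constant because $\alpha\in[0,\frac{3\pi}{5})$; the $\zeta h$ part absorbs into an $\mathscr{O}(h)\|\phi^{\mathrm{cut}}_{h,z,2}\|^2$ error. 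The contribution of $P_h$, whose explicit structure recalled before \eqref{eq.MNP} displays factors $u$, $us^2$, $s^3$, is controlled by $Ch\|\phi^{\mathrm{cut}}_{h,z,2}\|^2$ using the $s$- and $u$-localization of $\phi^{\mathrm{cut}}_{h,z,2}$ provided by the analogue of Proposition \ref{prop.locs}; this analogue transports with no essential change since the source $\psi^{\mathrm{cut}}_{h,2}$ is supported near $(0,0)$ by construction of $\psi_h$ and its size is controlled by $\|\psi_h\|$. This produces the exact counterpart of \eqref{eq.f-alpha=1}, namely
\[cq_{\mathrm{Ai},h}(\phi^{\mathrm{cut}}_{h,z,2})+c\|hD_s\phi^{\mathrm{cut}}_{h,z,2}\|^2+c\|s\phi^{\mathrm{cut}}_{h,z,2}\|^2\leq Ch\|\phi^{\mathrm{cut}}_{h,z,2}\|^2+\|\psi_h\|\|\phi^{\mathrm{cut}}_{h,z,2}\|+\mathscr{O}(h^\infty)(\|\varphi_{h,z}\|^2+\|\psi_h\|^2)\,.\]

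Finally, the two bounds of the lemma follow by bookkeeping. The bound on $\|hD_s\phi^{\mathrm{cut}}_{h,z,2}\|^2+\|s\phi^{\mathrm{cut}}_{h,z,2}\|^2$ is a direct rearrangement, dividing by $c$. The bound on $\|hD_u\phi^{\mathrm{cut}}_{h,z,2}\|^2$ follows from the elementary inequality $\|hD_u\Psi\|^2\leq q_{\mathrm{Ai},h}(\Psi)+z_1h^{2/3}\|\Psi\|^2$, which is immediate from the definition of $q_{\mathrm{Ai},h}$ and from $u\geq 0$; this upgrades the $Ch\|\phi^{\mathrm{cut}}_{h,z,2}\|^2$ into the claimed $Ch^{2/3}\|\phi^{\mathrm{cut}}_{h,z,2}\|^2$. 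I expect the main potential obstacle to be the preliminary verification that the localization estimates of Proposition \ref{prop.locs} can indeed be reproduced for $\phi^{\mathrm{cut}}_{h,z,2}$ despite the inhomogeneous source; however, since $\psi_h$ is compactly supported near $(0,0)$ and the Agmon-type bounds for $\varphi_{h,z}$ have already been recorded, this adaptation is routine and does not require any genuinely new idea.
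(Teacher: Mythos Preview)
Your approach is the paper's: it simply refers back to the opening of the proof of Proposition~\ref{prop.controlf}, i.e.\ pair with $\phi^{\mathrm{cut}}_{h,z,2}$, rotate by $e^{-i\alpha/2}$, take real parts, and read off the analogue of \eqref{eq.f-alpha=1}. Your extraction of the two claimed bounds from that inequality (via $\|hD_u\Psi\|^2\leq q_{\mathrm{Ai},h}(\Psi)+z_1h^{2/3}\|\Psi\|^2$) is also correct.

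One step is mis-justified, however. You control the $P_h$ contribution by $Ch\|\phi^{\mathrm{cut}}_{h,z,2}\|^2$ by invoking ``the analogue of Proposition~\ref{prop.locs}''. That is circular: Proposition~\ref{prop.locs} is proved \emph{downstream} of the estimates you are establishing here (it rests on Proposition~\ref{prop.quasimode43}, hence on Lemmas~\ref{lem.hnabla23}--\ref{lem.hnabla243}, whose analogues are precisely the present lemma and Lemma~\ref{lem.finb}). At this stage the only localization available for $\phi^{\mathrm{cut}}_{h,z,2}$ is the one imposed by the cutoff $\chi_h$, giving $|s|\lesssim h^{1/3-\eta}$ and $|u|\lesssim h^{2/3-\eta}$ on the support. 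With these cruder bounds the $P_h$ terms are not dominated by $Ch\|\phi^{\mathrm{cut}}_{h,z,2}\|^2$ outright; instead, after an integration by parts on the $u\,r_{h,2}(hD_s)^2$ piece and Young's inequality on the others, they produce contributions of the form $Ch^{2/3-\eta}\|hD_s\phi^{\mathrm{cut}}_{h,z,2}\|^2$, $Ch^{1/3-\eta}\|s\phi^{\mathrm{cut}}_{h,z,2}\|^2$, etc., which are \emph{absorbed} into the coercive left-hand side for $h$ small. The surviving $Ch\|\phi^{\mathrm{cut}}_{h,z,2}\|^2$ on the right then comes solely from the $\zeta h$ part of $z$. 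With this correction the argument goes through and matches the paper.
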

 Similarly, we get the control of the second order derivative with respect to $s$.
\begin{lemma}\label{lem.finb}
We have
\[\|(hD_s)^2\phi^{\mathrm{cut}}_{h,z,2}\|\leq Ch\|\phi_{h,z,2}^{\mathrm{cut}}\|+Ch^{\frac12}\|\psi_{h,2}^{\mathrm{cut}}\|^{\frac12}\|\phi^{\mathrm{cut}}_{h,z,2}\|^{\frac12}+\mathscr{O}(h^\infty)(\|\varphi_{h,z}\|+\|\psi_h\|)\,.\]	
\end{lemma}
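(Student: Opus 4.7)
My plan is to mirror the proof of estimate \eqref{eq.f-alpha=2b} in Proposition \ref{prop.controlf}, now with source term $-\psi^{\mathrm{cut}}_{h,2}$ in place of $f_2^{\mathrm{cut}}$. Starting from
\[(\mathscr{M}_{h,\alpha}-z)\phi^{\mathrm{cut}}_{h,z,2}=-\psi^{\mathrm{cut}}_{h,2}+\mathscr{O}(h^\infty)(\|\varphi_{h,z}\|+\|\psi_h\|),\]
I would apply $hD_s$, pair with $hD_s\phi^{\mathrm{cut}}_{h,z,2}$, and commute $hD_s$ through $\mathscr{M}_{h,\alpha}$; multiplying by $e^{-i\alpha/2}$ and taking the real part, the same coercivity argument as in Proposition \ref{prop.controlf} bounds the left-hand side from below by
\[c\|(hD_s)^2\phi^{\mathrm{cut}}_{h,z,2}\|^2+c\,q_{\mathrm{Ai},h}(hD_s\phi^{\mathrm{cut}}_{h,z,2})+c\|s\,hD_s\phi^{\mathrm{cut}}_{h,z,2}\|^2.\]

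The decisive step is the treatment of the source contribution. Using the self-adjointness of $hD_s$ twice, one writes
\[\langle hD_s\psi^{\mathrm{cut}}_{h,2},hD_s\phi^{\mathrm{cut}}_{h,z,2}\rangle=\langle (hD_s)^2\psi^{\mathrm{cut}}_{h,2},\phi^{\mathrm{cut}}_{h,z,2}\rangle,\]
and then exploits the explicit Hermite--Airy structure of $\psi_h=(0,\chi\Psi_{1,n,h})$ from \eqref{eq.Psimn}: because the Hermite factor $f_n$ is rescaled by $h^{-1/2}$ in the variable $s$, a direct computation yields $\|(hD_s)^2\psi^{\mathrm{cut}}_{h,2}\|\leq Ch\|\psi^{\mathrm{cut}}_{h,2}\|$. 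Thus the source contribution is controlled by $Ch\|\psi^{\mathrm{cut}}_{h,2}\|\,\|\phi^{\mathrm{cut}}_{h,z,2}\|$. This plays exactly the role that the eigenfunction estimate \eqref{eq.estimatesf} with multi-index $(0,2,0)$ played in Proposition \ref{prop.controlf}.

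For the commutator $[hD_s,\mathscr{M}_{h,\alpha}]$, note that the $u$-kinetic piece $hD_u(J'_\theta)^{-2}hD_u$ commutes with $hD_s$ since $(J'_\theta)^{-2}$ depends only on $u$, leaving only the $s$-kinetic piece $hD_s\,(-ih\partial_s m_\theta^{-2})\,hD_s$ and the potential piece $-ihe^{i\alpha}\partial_s\Gamma_1$. After one further integration by parts, Cauchy--Schwarz and Young's inequality produce contributions of the form $\epsilon\|(hD_s)^2\phi^{\mathrm{cut}}_{h,z,2}\|^2+C_\epsilon h\|hD_s\phi^{\mathrm{cut}}_{h,z,2}\|^2+C_\epsilon h\bigl(\|s\phi^{\mathrm{cut}}_{h,z,2}\|^2+\|u\phi^{\mathrm{cut}}_{h,z,2}\|^2\bigr)$. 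All of these can be recycled through Lemma \ref{lem.fina} (which converts $\|hD_s\phi^{\mathrm{cut}}_{h,z,2}\|^2$ and $\|s\phi^{\mathrm{cut}}_{h,z,2}\|^2$ into $Ch\|\phi^{\mathrm{cut}}_{h,z,2}\|^2+C\|\psi_h\|\|\phi^{\mathrm{cut}}_{h,z,2}\|+\mathscr{O}(h^\infty)(\ldots)$) and via the cutoff-support bound $u\leq h^{2/3-\eta}$. Absorbing the $\epsilon$-term on the left yields
\[\|(hD_s)^2\phi^{\mathrm{cut}}_{h,z,2}\|^2\leq Ch^2\|\phi^{\mathrm{cut}}_{h,z,2}\|^2+Ch\|\psi^{\mathrm{cut}}_{h,2}\|\|\phi^{\mathrm{cut}}_{h,z,2}\|+\mathscr{O}(h^\infty)(\|\varphi_{h,z}\|+\|\psi_h\|)^2,\]
and the stated estimate follows by extracting square roots via the elementary inequality $\sqrt{a+b+c}\leq \sqrt{a}+\sqrt{b}+\sqrt{c}$.

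The main obstacle is obtaining the source term in the geometric-mean form $h\|\psi^{\mathrm{cut}}_{h,2}\|\|\phi^{\mathrm{cut}}_{h,z,2}\|$ rather than the pedestrian $C\|\psi^{\mathrm{cut}}_{h,2}\|^2$ that a crude Cauchy--Schwarz/Young argument would give; this is precisely what forces one to use the fine $s$-regularity of $\psi_h$, in strict parallel with how Proposition \ref{prop.controlf} leaned on $f$ being an (approximate) eigenfunction solving \eqref{eq.quasiairy}. A secondary bookkeeping difficulty lies in ensuring that the commutator contributions involving $\|s\phi^{\mathrm{cut}}_{h,z,2}\|$ close properly through Lemma \ref{lem.fina} without reintroducing an unabsorbable $\|\psi_h\|^2$ term.
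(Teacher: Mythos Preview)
Your proposal is correct and follows essentially the same route as the paper. The paper's proof is simply terser: it writes ``Adapting \eqref{eq.f-alpha=2b} with our notations'' to obtain the analogue of that inequality with $-\psi_{h,2}^{\mathrm{cut}}$ in place of $f_2^{\mathrm{cut}}$, then applies Young's inequality together with Lemma~\ref{lem.fina} to reach $\|(hD_s)^2\phi_{h,z,2}^{\mathrm{cut}}\|^2\leq Ch^2\|\phi_{h,z,2}^{\mathrm{cut}}\|^2+Ch\|\psi_{h,2}^{\mathrm{cut}}\|\|\phi_{h,z,2}^{\mathrm{cut}}\|+\mathscr{O}(h^\infty)(\ldots)$, which is exactly your penultimate display. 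Your explicit justification of $\|(hD_s)^2\psi_{h,2}^{\mathrm{cut}}\|\leq Ch\|\psi_{h,2}^{\mathrm{cut}}\|$ via the $h^{-1/2}$ Hermite scaling is precisely the mechanism the paper leaves implicit when it invokes \eqref{eq.f-alpha=2b} (where the analogous role was played by \eqref{eq.estimatesf} for $f_2^{\mathrm{cut}}$).
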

\begin{proof}
Adapting \eqref{eq.f-alpha=2b} with our notations gives
\begin{multline*}
	\|(hD_s)^2\phi_{h,z,2}^{\mathrm{cut}}\|^2
	\leq Ch\|hD_s\phi_{h,z,2}^{\mathrm{cut}}\|^2	+Ch\|\psi_{h,2}^{\mathrm{cut}}\|\|\phi^{\mathrm{cut}}_{h,z,2}\|\\
+Ch\|(hD_s)^2\phi_{h,z,2}^{\mathrm{cut}}\|\|hD_s\phi_{h,z,2}^{\mathrm {cut}}\|	+\mathscr{O}(h^\infty)\|(hD_s)^2\phi_{h,z,2}^{\mathrm{cut}}\|(\|\psi_h\|+\|\varphi_{h,z}\|)\,.
\end{multline*}
With the Young inequality, this gives
\begin{multline*}
	\|(hD_s)^2\phi_{h,z,2}^{\mathrm{cut}}\|^2
	\leq Ch^2\|\phi_{h,z,2}^{\mathrm{cut}}\|^2	+Ch\|\psi_{h,2}^{\mathrm{cut}}\|\|\phi^{\mathrm{cut}}_{h,z,2}\|
	+\mathscr{O}(h^\infty)(\|\psi_h\|^2+\|\varphi_{h,z}\|^2)\,.
\end{multline*}
The proof is complete.
	\end{proof}
We first write
\begin{equation*}
	(\mathscr{N}_{h,\alpha}-z)\phi^{\mathrm{cut}}_{h,z,2}=-\psi^{\mathrm{cut}}_{h,2}+R_{h,z}\,,
\end{equation*}
with
\[\|R_{h,z}\|\leq   Ch^{\frac23-\eta}\|(hD_s)^2\phi^{\mathrm{cut}}_{h,z,2}\|+ Ch\|(hD_s)\phi_{h,z,2}^{\mathrm{cut}}\|+C\|s^3\phi_{h,z,2}^{\mathrm{cut}}\|
+\mathscr{O}(h^\infty)(\|\varphi_{h,z}\|+\|\psi_h\|)\,.\]
From Lemmas \ref{lem.fina} and \ref{lem.finb}, we get
\[\|R_{h,z}\|\leq Ch^{\frac23-2\eta+\frac12}\|\phi_{h,z,2}^{\mathrm{cut}}\|+Ch^{\frac23-2\eta}\|\psi_h\|^{\frac12}\|\phi_{h,z,2}^{\mathrm{cut}}\|^{\frac12}
+\mathscr{O}(h^\infty)(\|\varphi_{h,z}\|+\|\psi_h\|)\,.\]
By using Proposition \ref{prop.resolventmodel} and the fact that $z\in\widetilde{\mathscr{C}}_{n,h}$, we infer that
\[\|\phi_{h,z,2}^{\mathrm{cut}}\|\leq Ch^{-1}\|\psi_h\|+\mathscr{O}(h^\infty)(\|\varphi_{h,z}\|+\|\psi_h\|)\,.\]
With \eqref{eq.phihz1} and \eqref{eq.phihz2}, this gives
\[\|\varphi_{h,z}\|\leq Ch^{-1}\|\psi_h\|\,.\]
Then, we also deduce that
\[\left\|\int_{\widetilde{\mathscr{C}}_{n,h}}(\mathscr{N}_{h,\alpha}-z)^{-1}R_{h,z}\mathrm{d}z\right\|=o(1)\|\psi_h\|\,.\]
This shows that
\[\Pi_{n,h}\psi_h=\frac{1}{2i\pi}\int_{\mathscr{C}_{n,h}}(z-\mathscr{N}_{h,\alpha})^{-1}\psi^{\mathrm{cut}}_{h,2}\mathrm{d}z+o(1)\|\psi_h\|\,.
\]
Recalling the resolvent formula
\[(z-\mathscr{N}_{h,\alpha})^{-1}-(z-\mu_n(h,\alpha))^{-1}=(z-\mathscr{N}_{h,\alpha})^{-1}(z-\mu_n(h,\alpha))^{-1}(\mathscr{N}_{h,\alpha}-\mu_n(h,\alpha))\,,\]
and that
\[(\mathscr{N}_{h,\alpha}-\mu_n(h,\alpha)) \psi_{h,2}^{\mathscr{cut}}=\mathscr{O}(h^\infty)\,,\]
we get
\[\Pi_{n,h}\psi_h=\psi_h+o(1)\|\psi_h\|\,.\]
Therefore, $\Pi_{n,h}$ is not zero for $h$ small enough. Recalling the discussion
at the beginning of this section, this completes the proof of Proposition \ref{prop.finale}.

\section*{Acknowledgments}
This work was conducted within the France 2030 framework programme, the Centre Henri Lebesgue  ANR-11-LABX-0020-01. The authors thank the F\'ed\'eration  de recherche
Math\'ematiques des Pays de Loire and its regional project Ambition Lebesgue Loire, which funded the stay of D. K. at the Laboratoire Angevin de Recherche Math\'ematique in May 2022, when this work started.
D.K. was also partially supported by the EXPRO grant No. 20-17749X
of the Czech Science Foundation. N.R. is grateful to Anne-Sophie Bonnet - Ben Dhia and Marc Lenoir for stimulating discussions at the Oberwolfach Research Institute for Mathematics in September 2022. He also wishes to thank Martin Averseng for enlightening discussions about the Caccioppoli estimates and the presentation in Section \ref{sec.conse}. The authors also thank Laura Baldelli for communicating them references about Caccioppoli estimates. F. H. and N.R. are also grateful to Bernard Helffer for useful discussions.

\bibliographystyle{abbrv}
\bibliography{biblio}

\begin{thebibliography}{10}

\bibitem{A08}
Y.~Almog.
\newblock The stability of the normal state of superconductors in the presence
  of electric currents.
\newblock {\em SIAM J. Math. Anal.}, 40(2):824--850, 2008.

\bibitem{Almog-Grebenkov-Helffer_2018}
Y.~Almog, D.~Grebenkov, and B.~Helffer.
\newblock Spectral semi-classical analysis of a complex {S}chr{\"o}dinger
  operator in exterior domains.
\newblock {\em J. Math. Phys.}, 59:041501, 2018.

\bibitem{AGH19}
Y.~Almog, D.~S. Grebenkov, and B.~Helffer.
\newblock On a {Schr{\"o}dinger} operator with a purely imaginary potential in
  the semiclassical limit.
\newblock {\em Commun. Partial Differ. Equations}, 44(12):1542--1604, 2019.

\bibitem{Almog-Helffer_2020}
Y.~Almog and B.~Helffer.
\newblock The spectrum of a {S}chr{\"o}dinger operator in a wire-like domain
  with a purely imaginary degenerate potential in the semiclassical limit.
\newblock {\em M{\'e}moires de la SMF}, 166:94, 2020.

\bibitem{Almog-Helffer-Pan_2012}
Y.~Almog, B.~Helffer, and X.-B. Pan.
\newblock Superconductivity near the normal state in a half-plane under the
  action of a perpendicular electric current and an induced magnetic field,
  part {II}: The large conductivity limit.
\newblock {\em SIAM J. Math. Anal.}, 44:3671--3733, 2012.

\bibitem{Almog-Helffer-Pan_2013}
Y.~Almog, B.~Helffer, and X.-B. Pan.
\newblock Superconductivity near the normal state in a half-plane under the
  action of a perpendicular electric currents and an induced magnetic field.
\newblock {\em Trans. Amer. Math. Soc.}, 365:1183--1217, 2013.

\bibitem{AH16}
Y.~Almog and R.~Henry.
\newblock Spectral analysis of a complex {Schr{\"o}dinger} operator in the
  semiclassical limit.
\newblock {\em SIAM J. Math. Anal.}, 48(4):2962--2993, 2016.

\bibitem{AIM09}
K.~Astala, T.~Iwaniec, and G.~Martin.
\newblock {\em Elliptic partial differential equations and quasiconformal
  mappings in the plane}, volume~48 of {\em Princeton Math. Ser.}
\newblock Princeton, NJ: Princeton University Press, 2009.

\bibitem{C51}
R.~Caccioppoli.
\newblock Limitazioni integrali per le soluzioni di un'equazione lineare
  ellittica a derivate parziali.
\newblock {\em Giorn. Mat. Battaglini, IV. Ser.}, 80:186--212, 1951.

\bibitem{CKPRS22}
H.~Cornean, D.~Krej{\v{c}}i{\v{r}}{\'{\i}}k, T.~G. Pedersen, N.~Raymond, and
  E.~Stockmeyer.
\newblock On the two-dimensional quantum confined {Stark} effect in strong
  electric fields.
\newblock {\em SIAM J. Math. Anal.}, 54(2):2114--2127, 2022.

\bibitem{GH18}
D.~S. Grebenkov and B.~Helffer.
\newblock On spectral properties of the {Bloch}--{Torrey} operator in two
  dimensions.
\newblock {\em SIAM J. Math. Anal.}, 50(1):622--676, 2018.

\bibitem{Grebenkov-Moutal-Helffer}
D.~S. Grebenkov, N.~Moutal, and B.~Helffer.
\newblock On the spectral properties of the {B}loch--{T}orrey equation in
  infinite periodically perforated domains.
\newblock In P.~Exner, R.~L. Frank, H.~Gesztesy, H.~Holden, and T.~Weidl,
  editors, {\em Partial differential equations, spectral theory, and
  mathematical physics}, EMS Series of Congress Reports, pages 177--196. Eur.
  Math. Soc., Z{\"u}rich, 2021.
\newblock The Ari Laptev anniversary volume.

\bibitem{H13}
R.~Henry.
\newblock {\em Spectre et pseudospectre d'op\'erateurs non-autoadjoints}.
\newblock PhD thesis, Universit\'e Paris-Sud, 2013.

\bibitem{I03}
T.~Iwaniec and C.~Sbordone.
\newblock Caccioppoli estimates and very weak solutions of elliptic equations.
\newblock {\em Atti Accad. Naz. Lincei, Cl. Sci. Fis. Mat. Nat., IX. Ser.,
  Rend. Lincei, Mat. Appl.}, 14(3):189--205, 2003.

\bibitem{Kato2}
T.~Kato.
\newblock {\em Perturbation Theory for Linear Operators}.
\newblock Springer-Verlag, Berlin, 1995.
\newblock Reprint of the Corr. Print. of the 2nd ed. 1980.

\bibitem{CKPRS22b}
T.~G. Pedersen, H.~Cornean, D.~Krej{\v{c}}i{\v{r}}{\'{\i}}k, N.~Raymond, and
  E.~Stockmeyer.
\newblock Stark-localization as a probe of nanostructure geometry.
\newblock {\em New. J. Phys.}, 24:093005, 2022.

\bibitem{Semoradova-Siegl}
I.~Semoradova and P.~Siegl.
\newblock Diverging eigenvalues in domain truncations of {Schr{\"o}dinger}
  operators with complex potentials.
\newblock {\em SIAM J. Math. Anal.}, 54(4):5064--5101, 2022.

\end{thebibliography}

\end{document}